\documentclass{amsart}
\usepackage{amssymb,amsmath,amsthm}
\usepackage[foot]{amsaddr}
\title{Initial algebras from constructive ordinals}
\date{\today}
\author{Benno van den Berg}
\address{Institute for Logic, Language and Computation, Universiteit van Amsterdam, P.O. Box 94242, NL, 1090 GE Amsterdam, The Netherlands}
\email{b.vandenberg3@uva.nl}
\usepackage{ast2}
\usepackage{bussproofs}
\usepackage[margin=1.7in]{geometry}

\usepackage{tikz-cd}

\usepackage{float}
\restylefloat{table}

\newcommand{\cleq}{\preccurlyeq}

\newcommand{\inr}{{\sf inr}}

\usepackage[svgnames]{xcolor}

\usepackage[colorlinks=true,
            citecolor=Fuchsia,
            linkcolor=Fuchsia, 
            urlcolor=Fuchsia
            ]{hyperref}

\usepackage{geometry}

\begin{document}

\begin{abstract}
    We show how a standard constructive notion of ordinal supports a useful constructive theory of transfinite recursion. We do this by giving constructive proofs of various initial algebra theorems, like Ad\'amek's theorem, and a version of Quillen's small object argument for constructing cofibrantly generated algebraic weak factorisation systems.
\end{abstract}

\maketitle

\section{Introduction}

The notion of a well-order is one of the fundamental notions of logic, playing a prominent role in set theory, proof theory and recursion theory. Ordinals arise as canonical representatives of isomorphism classes of well-orders. They measure the complexity of inductive definitions or inductively defined objects, as well as the strength of theories in proof theory. 

Ordinals are also naturally thought of as the stages in an iterative process, where they allow us to continue an iterative process beyond any finite stage. This process of \emph{transfinite recursion} is one of the main applications of the notion of ordinal: indeed, this is why Cantor invented them. One typical application of transfinite recursion is the following result due to Ad\'amek on the existence of initial algebras  for endofunctors.

\begin{theo}{adadamek}{\rm (Ad\'amek \cite{adamek74})}
        Let $F$ be an endofunctor on a cocomplete category \ct{C}, and $\lambda$ be a limit ordinal. If $F$ preserves colimits of shape $\lambda$, then $F$ has an initial algebra. 
\end{theo}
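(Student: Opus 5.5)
The plan is to build the initial chain of $F$ by transfinite recursion up to $\lambda$ and show that its colimit is an initial algebra. Concretely, I will define objects $W_\alpha$ for $\alpha \leq \lambda$, together with connecting maps $w_{\alpha,\beta}\colon W_\alpha \to W_\beta$ for $\alpha \leq \beta$:
\begin{itemize}
\item $W_0 = 0$, the initial object.
\item $W_{\alpha+1} = F W_\alpha$.
\item At a limit $\gamma \leq \lambda$, $W_\gamma = \operatorname{colim}_{\alpha<\gamma} W_\alpha$.
\end{itemize}
For the connecting maps:
\begin{itemize}
\item $w_{0,1}$ is the unique map out of $0$.
\item $w_{\alpha+1,\beta+1} = F w_{\alpha,\beta}$.
\item At a limit $\gamma$, the maps $w_{\alpha,\gamma}$ are the colimit injections.
\item The map $w_{\gamma,\gamma+1}\colon \operatorname{colim}_{\alpha<\gamma}W_\alpha \to F W_\gamma$ is induced by the cocone $F w_{\alpha,\gamma}\circ w_{\alpha,\alpha+1}$ for $\alpha<\gamma$.
\end{itemize}
The first step is to check by transfinite induction on $\beta$ that this is a well-defined functor $\lambda+1 \to \ct{C}$. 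That is, the cocone at a limit really is compatible, and $w_{\beta,\gamma}\circ w_{\alpha,\beta} = w_{\alpha,\gamma}$. The successor case reduces to functoriality of $F$, and the limit case to the universal property of the colimit.

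Next I will produce the algebra structure on $W_\lambda$. Since $F$ preserves colimits of shape $\lambda$, the canonical map $\operatorname{colim}_{\alpha<\lambda} F W_\alpha \to F W_\lambda$ is an isomorphism. Because $\lambda$ is a limit ordinal, the successors $\alpha+1$ with $\alpha<\lambda$ are cofinal in $\lambda$. Hence $\operatorname{colim}_{\alpha<\lambda} F W_\alpha = \operatorname{colim}_{\alpha<\lambda} W_{\alpha+1} \cong W_\lambda$, and composing gives an isomorphism $W_\lambda \cong F W_\lambda$. I take $a\colon F W_\lambda \to W_\lambda$ to be its inverse. Concretely, $a \circ F w_{\alpha,\lambda} = w_{\alpha+1,\lambda}$ for all $\alpha<\lambda$.

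For initiality, fix an algebra $b\colon FB\to B$. I will build maps $h_\alpha\colon W_\alpha \to B$ by the same transfinite recursion:
\begin{itemize}
\item $h_0$ is the unique map out of $0$.
\item $h_{\alpha+1} = b\circ F h_\alpha$.
\item At a limit, $h_\gamma$ is induced by the cocone $(h_\alpha)_{\alpha<\gamma}$.
\end{itemize}
Simultaneously I will prove by induction that $h_\beta\circ w_{\alpha,\beta} = h_\alpha$. Then $h := h_\lambda$ satisfies $h\circ a\circ F w_{\alpha,\lambda} = h_{\alpha+1} = b\circ F h_\alpha = b\circ Fh\circ F w_{\alpha,\lambda}$. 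The maps $F w_{\alpha,\lambda}$ are jointly epic because $F$ preserves the colimit, so this gives $h\circ a = b\circ Fh$.

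For uniqueness, suppose $g\colon W_\lambda\to B$ is any algebra map. I will show $g\circ w_{\alpha,\lambda} = h_\alpha$ by induction on $\alpha$. The case $\alpha=0$ holds by initiality of $0$. For successors, $g\circ w_{\alpha+1,\lambda} = g\circ a\circ F w_{\alpha,\lambda} = b\circ F(g\circ w_{\alpha,\lambda}) = h_{\alpha+1}$. For limits, the colimit injections are jointly epic. Taking $\alpha$ over all of $\lambda$ then gives $g = h$.

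The main obstacle is foundational rather than categorical: justifying transfinite recursion that produces objects and morphisms simultaneously, with the coherence conditions carried along as part of the induction hypothesis. Classically I would just invoke the recursion theorem on ordinals, defining at stage $\beta$ the whole diagram restricted to $\beta+1$ and showing the restrictions agree. The delicate points are the limit case and the case split into zero, successor and limit. In the constructive setting of this paper that trichotomy is not available, so I expect the real work to be reformulating the recursion to avoid the case distinction. One way is to define $W_\beta = \operatorname{colim}_{\alpha<\beta} F W_\alpha$ uniformly, which subsumes all three cases, though I would first try the classical argument above.
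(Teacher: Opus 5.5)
Read classically (in \textbf{ZF}), your argument is the standard initial-chain proof of Ad\'amek's theorem and is correct. The paper, however, proves this statement constructively, for the ordinals of Section~2 and with ``limit ordinal'' meaning only that $\Downarrow\lambda$ is directed. In that setting the proposal has genuine gaps at three points, and you flag only the first.

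First, every recursion and induction you use splits into zero, successor and limit cases: for $W_\alpha$, for $w_{\alpha,\beta}$, for $h_\alpha$, and in the uniqueness proof. That trichotomy implies excluded middle. Second, indexing by $\lambda+1$ with $w_{\alpha+1,\beta+1}=Fw_{\alpha,\beta}$ presupposes that $\alpha\leq\beta$ implies $s\alpha\leq s\beta$. Third, the algebra structure rests on cofinality of successors, i.e.\ on $s\alpha<\lambda$ whenever $\alpha<\lambda$, and directedness does not give this. The uniform recursion you suggest at the end does not repair this third point.

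For a concrete instance of the last two failures, take a proposition $p$ and put $\gamma=\sup(\{0\}\cup\{1\mid p\})$ and $\lambda=\sup\{s^k\gamma : k\in\mathbb{N}\}$. Then $0<\gamma$, and $\Downarrow\lambda$ is directed with $0<\lambda$. Yet $s0\leq s\gamma$ and $s0<\lambda$ are each equivalent to $p\vee\neg p$.

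Your closing suggestion $W_\beta=\operatorname{colim}_{\alpha<\beta}FW_\alpha$ is the right starting point; it is the paper's progressive iteration. Making it work is the real content of the proof:
\begin{enumerate}
\item The coprojections $x^\beta_\gamma\colon FX_\gamma\to X_\beta$ must be carried as data (an algebraic chain), and the connecting maps $X^\beta_\gamma$ are induced by the universal property of $X_\gamma$.
\item Existence must be proved together with uniqueness, by induction along the decomposition of \reflemm{decompositionofsup} into a successor step and a union step. Uniqueness is what makes the union step independent of choices (\refprop{existenceofuniquesimpleprogit}).
\item The structure map $i\colon FI\to I$ on $I=\operatorname{colim}X$ is characterised by $i\circ Fi_\beta=i_\alpha\circ x^\alpha_\beta$ for any $\alpha$ with $\beta<\alpha<\lambda$. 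Directedness supplies such an $\alpha$, and a common upper bound of two choices shows the result is independent of the choice. This replaces your cofinality step, and no isomorphism $I\cong FI$ has to be established.
\item Initiality needs no case split. An algebra $(Y,y)$ is regarded as a constant algebraic chain. A progressive iteration is initial among algebraic chains by plain $<$-induction, because each $X_\beta$ is a colimit over all of $\Downarrow\beta$ (\refprop{simpleprogitinitial}). Both existence and uniqueness of the algebra map $I\to Y$ follow from this (\reftheo{adamekagain}).
\end{enumerate}
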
  

Various variations on this result exist and one such can be used to prove a form of Quillen's small object argument, guaranteeing the existence of cofibrantly generated algebraic weak factorisation systems. The following result can be found in \cite[Theorem 18]{bergetal25}, building on work by Quillen, Bourke and Garner, see \cite{Quillen1967,Garner2011Understanding,Bourke2016Accessible}.

\begin{theo}{smallobjarg}{\rm (Double-categorical small object argument)}
        Let \ct{C} be a locally small and cocomplete category, and $\lambda$ be a limit ordinal. If $\mathbb{A}$ is a small double category over \ct{C} such that $Aa$ is $\lambda$-presentable for each object $a$ in $\mathbb{A}$, then the algebraic weak factorization system cofibrantly generated by $\mathbb{A}$ exists.
\end{theo}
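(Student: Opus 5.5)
The plan is to reduce the statement to an initial algebra problem of the kind handled by \ref{adadamek}. Following Garner and Bourke--Garner, an algebraic weak factorisation system cofibrantly generated by $\mathbb{A}$ is determined by its right class of algebras, the double category of algebraic $\mathbb{A}$-fibrations. So it suffices to construct the free such algebra on each arrow of $\ct{C}$, functorially and compatibly with composition.

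\textbf{Step 1: the free-algebra problem.} I work in the slice-over-codomain setting: fix the codomain and consider the arrow category $\ct{C}^{\to}$. From the small double category $\mathbb{A}$ I define an endofunctor $F$ on $\ct{C}/Y$ by a ``one-step'' lifting construction. Given $f\colon X\to Y$, take the coproduct over all lifting problems of squares from $Aa$ (for vertical morphisms $a$ of $\mathbb{A}$) into $f$. Push out along the domains, and then quotient by the double-categorical coherence conditions: compatibility with vertical composition and with horizontal squares in $\mathbb{A}$. Because $\mathbb{A}$ is small and $\ct{C}$ is locally small, this coproduct is small, so cocompleteness of $\ct{C}$ makes $F$ well defined. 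The quotient itself is expressed as a coequaliser. Algebraic fibrations over $f$ should then correspond to $F$-algebras structured over $f$, or more precisely to algebras for the pointed endofunctor $1\to F$ satisfying the coherence conditions.

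\textbf{Step 2: preservation of $\lambda$-colimits.} Since each $Aa$ is $\lambda$-presentable, the domain and codomain of each $Aa$ are $\lambda$-presentable in $\ct{C}$. Hence the hom-functors $\ct{C}^{\to}(Aa,-)$ preserve colimits of shape $\lambda$, which are $\lambda$-filtered because $\lambda$ is a limit ordinal. Consequently the indexing sets of lifting problems commute with $\lambda$-chains. Coproducts, pushouts and coequalisers all commute with colimits, so $F$ preserves colimits of shape $\lambda$.

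\textbf{Step 3: constructing the free algebra.} Apply an Adámek-style argument, that is, a pointed or ``free algebra'' variant of \ref{adadamek} of the form used by Kelly. Build the chain $f\to Ff\to F^2f\to\cdots$ by transfinite recursion up to $\lambda$, taking colimits at limit stages. The colimit at $\lambda$ is a fixed point because $F$ preserves it. This yields the free algebraic fibration on $f$, that is, a left adjoint to the forgetful functor from algebraic fibrations to arrows. That adjunction gives the monad part of the factorisation, and the left factor is the unit. Finally I check that the free algebras are closed under the required composition, so that the monad satisfies the distributive law and we obtain an algebraic weak factorisation system rather than merely a monad. This follows the argument in \cite{bergetal25}.

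\textbf{Main obstacle.} The hard part is Step 1, specifically the coherence quotient. Encoding the double-categorical compatibility conditions into a single colimit-preserving endofunctor is delicate, since naive quotients may fail to be stable under the iteration. My first approach would be to follow Bourke--Garner and define $F$ via a left Kan extension along the functor $\mathbb{A}_1\to\ct{C}^{\to}$, using a density or coend formula. Such a formula is manifestly a colimit of representables at $\lambda$-presentable objects, which makes both well-definedness and Step 2 immediate.
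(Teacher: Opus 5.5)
There is a genuine gap, and it is in the double-categorical part of the statement. Compatibility of the lifting operation with vertical composition in $\mathbb{A}$ cannot be imposed by a coequaliser inside a single one-step functor $F$. For composable vertical $u$, $v$, the condition says that the lift against $A(v\cdot u)$ equals the lift against $Av$ of the square whose top edge is itself a lift against $Au$. So the structure map $x\colon Ff\to f$ occurs twice, and the condition reads $x\circ Fx\circ\sigma = x\circ\tau$. Here $G$ is a further functor indexing lifting problems against composites, and $\sigma\colon G\Rightarrow FF$, $\tau\colon G\Rightarrow F$ pick out the twice-iterated filler and the filler for the composite. A quotient $F\to F'$ only imposes equations $x\circ p=x\circ q$ that are linear in $x$. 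The left Kan extension along $\mathbb{A}_1\to\ct{C}^\to$ that you fall back on only encodes naturality with respect to the squares of $\mathbb{A}$. Both therefore give at best Garner's argument for the underlying category $\mathbb{A}_1$, i.e.\ free algebraic $\mathbb{A}_1$-fibrations rather than $\mathbb{A}$-fibrations. The paper handles this with the \emph{special algebras} of \cite{bergetal25}. By \cite[Theorem 18]{bergetal25} it suffices that the category of $(F,\eta)$-algebras satisfying exactly this equation is monadic over $\ct{C}^\to$. By \refcoro{monadicityspecialFalg} this reduces to $F$ and $G$ preserving colimits of shape $\Downarrow\lambda$, which follows from \cite[Proposition 11]{bergetal25}. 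That proposition applies because the objects $Aa$ of $\ct{C}$ (for objects $a$ of $\mathbb{A}$, not vertical arrows) are $\lambda$-small.

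Your Step 3 also fails as stated, even before the coherence condition. The one-step functor is pointed but not well-pointed, so the colimit of $f\to Ff\to F^2f\to\cdots$ (with connecting maps $F^n\eta_f$) need not satisfy $x\circ\eta=1$, because $\eta_{F^nf}$ and $F^n\eta_f$ differ. Preservation of the colimit gives an $F$-algebra, not a pointed one, let alone a special one. One must coequalise at every stage, which the paper builds into the shapes $D(\beta)$ and $S(\beta)$. Each stage $X_\beta$ is the colimit of the $FX_\gamma$ for $\gamma<\beta$ and the $GX_\delta$, identifying $\eta_{X_\gamma}\circ x^\gamma_\delta$ with $FX^\gamma_\delta$ and $Fx^\gamma_\delta\circ\sigma_{X_\delta}$ with $\eta_{X_\gamma}\circ x^\gamma_\delta\circ\tau_{X_\delta}$. \reftheo{specialconvergence} then transfers the special equation to the colimit, using that $G$ preserves it and that $\Downarrow\lambda$ is directed. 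Moreover, in the paper's constructive setting ``taking colimits at limit stages'' is not a legitimate recursion, since deciding whether an ordinal is zero, a successor or a limit implies excluded middle. Progressive iterations avoid this by defining every stage uniformly from all earlier ones, and ``limit ordinal'' means only that $\Downarrow\lambda$ is directed. Finally, a left adjoint alone does not give the AWFS cofibrantly generated by $\mathbb{A}$. Your separate distributive-law check is not what is needed: the forgetful functor must be monadic, which the paper gets from Beck's theorem via \reftheo{checkformonadicity} and then feeds into \cite[Theorem 18]{bergetal25}.
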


This paper is concerned with applications of transfinite recursion in a constructive metatheory. Indeed, we will show that by employing a suitable constructive notion of ordinal the two results we have just mentioned, including some variations on Ad\'amek's result, can be proven constructively.

We will leave it somewhat vague what we mean by a constructive metatheory; however, all the arguments that we will present in this paper can be performed in Aczel's constructive set theory {\bf CZF} \cite{aczelrathjen01}, so the reader who wishes to work within a fixed metatheory can choose that. They can also be formalised in a suitable version of Martin-L\"of's constructive type theory \cite{martinlof84}. In particular, we will not need or use any form of impredicativity. 

The classical notion of ordinal splits up in various different notions which are no longer equivalent constructively, which means that in a 
constructive setting there are a number of notions of ordinal to choose from (see, for instance, \cite{joyalmoerdijk95,taylor96,krausetal21,coquandetal23,dejongetal23,dejongetal25}). Fortunately, there is a standard notion that works quite well for our purposes. It exists in the following incarnations:
\begin{itemize}
    \item[-] as the notion of a transitive set of transitive sets. This notion, due to Powell, is the standard definition of an ordinal in constructive set theory.
    \item[-] as the notion of well-order in homotopy type theory \cite{univalent13}.
    \item[-] as an element of the initial ZF-algebra with a progressive successor in algebraic set theory (what Joyal and Moerdijk in \cite{joyalmoerdijk95} call a  ``von Neumann ordinal'').
\end{itemize}
In fact, we will give a self-contained presentation of this notion of ordinal in Section 2 of this paper.

This constructive notion of ordinal is classically correct in that in a classical metatheory it is equivalent to the standard definition: indeed, while the presentation may be unusual, all our definitions and proofs in this paper can also be formalised in {\bf ZF} (in particular, we avoid the use of the axiom of choice). At the same time, this constructive notion of ordinal cannot be assumed to possess a number of features the classical set theorist would expect them to have. We mention three principles that should be avoided for these constructive ordinals.
\begin{itemize}
    \item Any inhabited class of ordinals has a least element (``least ordinal principle'')
    \item Ordinals are linearly ordered.
    \item An ordinal is either zero, a successor ordinal or a limit ordinal.
\end{itemize}
Assuming that our ordinals satisfy any of these principles implies the Principle of Excluded Middle. For that reason, we will proceed in an agnostic fashion: we will neither assume that these are principles are true, nor will we assume that they are false. Clearly, this means that a number of standard proof methods have to be rethought. The goal of this paper is to demonstrate that this is possible and constructive proofs of the theorems mentioned above exist.

Specifically, the contents of this paper are as follows. As mentioned, we will give a self-contained account of our theory of ordinals in Section 2. In Section 3 we will give a constructive proof of Ad\'amek's theorem, \reftheo{adadamek}. It turns out that with some minor differences in presentation this was already constructively proven by Pitts and Steenkamp \cite{pittssteenkamp21}. Nevertheless, we have decided to include a detailed proof here, because two other main results in this paper (\reftheo{convergence} and \reftheo{specialconvergence}) build on this argument. Sections 4 and 5 will be devoted to proofs of these results. In Section 6 we will give a constructive proof of the small object argument, \reftheo{smallobjarg}. We end this paper with a discussion of some open questions and directions for future research, as well as two appendices. The first appendix discusses in detail the relationship of our treatment of ordinals with Joyal and Moerdijk's work on algebraic set theory \cite{joyalmoerdijk95}, while the second contains a verification of some technical conditions required for applying Beck's monadicity theorem.

\section{Our theory of ordinals}

To get started, we give a self-contained presentation of our theory of ordinals. Instead of defining what an ordinal is, we will define the class of all ordinals as the result of a certain inductive process.

The way to build an ordinal is to take a set of them and use them to create a new one. If $A$ is a set of ordinals, we will denote this new ordinal as $\sup(A)$; we can think of the elements of $A$ as the \emph{parents} of the ordinal $\sup(A)$, which is therefore a \emph{child} of all the elements in $A$. In other words, we have an operation
\begin{eqnarray*} 
    \sup & : & {\rm Pow}({\rm Ord}) \to {\rm Ord}
\end{eqnarray*}
(Here ${\rm Pow}(X)$ stands for the class of all subsets of a class $X$: it will not be assumed to be a set, even when $X$ is.) Since the ordinals are inductively generated by this operation, they satisfy the following induction principle.
\begin{displaymath}
    \begin{array}{cc} 
        ({\rm Induction}) & \frac{(\forall A) \, \big( \, (\forall \alpha \in A) \, \varphi(\alpha)  \to \varphi(\sup A) \, \big)}{(\forall \alpha) \, \varphi(\alpha) } 
    \end{array}
\end{displaymath}
The ordinals involved in the creation of an ordinal, either as a parent or as an ordinal involved in the creation of one of its parents, are called its \emph{ancestors}. The operation
\begin{eqnarray*} 
    \Downarrow & : & {\rm Ord} \to {\rm Pow}({\rm Ord}),
\end{eqnarray*}
assigns to each ordinal its set of ancestors and is defined by recursion as follows:
\begin{displaymath}
        \begin{array}{cc}
        ({\rm Ancestors}) & \Downarrow (\sup A) = A \cup \bigcup_{\alpha \in A} \Downarrow \alpha
        \end{array}
    \end{displaymath}
Finally, we will identify ordinals if they have the same ancestors:
    \begin{displaymath}
        \begin{array}{cc}
        ({\rm Extensionality}) &  \frac{\Downarrow \alpha = \Downarrow \beta}{\alpha = \beta}
        \end{array}
    \end{displaymath}
This completes our theory of ordinals. 

It will be convenient to introduce the following notation:
\begin{eqnarray*} \beta \lt \alpha 
    & :\Leftrightarrow &  \beta \in \Downarrow \alpha \\
    \beta \leq \alpha & :\Leftrightarrow & \beta \lt \alpha \lor \beta = \alpha.
\end{eqnarray*}
So $\beta \lt \alpha$ means that $\beta$ is an ancestor of $\alpha$. Using this notation, the recursive definition describing ancestors of ordinals of the form $\sup A$ can be restated as follows:
\begin{equation} \label{usefuleq} \begin{array}{ccccc} \beta \lt \sup A & \Leftrightarrow & \beta \in A \lor (\exists \alpha \in A) \, \beta \lt \alpha
    & \Leftrightarrow & (\exists \alpha \in A) \, \beta \leq \alpha. 
\end{array}
\end{equation}

\begin{rema}{compwithvonneumann}
    Following Von Neumann, in both {\bf CZF} or {\bf ZF} an ordinal is identified with the set of its ancestors (turning $\Downarrow$ into the identity), which means that $\beta \lt \alpha$ holds precisely when $\beta \in \alpha$, and $\beta \leq \alpha$ holds precisely when $\beta \in \alpha$ or $\beta = \alpha$. We will not make this identification here. It should be noted that in a constructive setting the relation $\beta \leq \alpha$ is stronger than $\Downarrow \beta \subseteq \Downarrow \alpha$ (or $\beta \subseteq \alpha$ if you would identify ordinals with the set of its ancestors). The weaker partial order is taken as basic in algebraic set theory (see Appendix A where it is denoted $\cleq$), but, remarkably, plays no role in the main arguments of this paper.
\end{rema}

\begin{lemm}{propoflt}
    \begin{enumerate}
        \item[(1)] $\lt$ is transitive.
        \item[(2)] The following $\lt$-induction scheme is valid:
        \begin{displaymath}
            \frac{(\forall \alpha) \, \big( \, (\forall \beta \lt \alpha) \, \psi(\beta) \to \psi(\alpha) \, \big)}{(\forall \alpha) \, \psi(\alpha)}
        \end{displaymath}   
        \item[(3)] $\alpha \lt \alpha$ never holds.
        \item[(4)] $\leq$ is a partial order.
        \item[(5)] $\sup( \Downarrow \alpha) = \alpha$. 
    \end{enumerate}
\end{lemm}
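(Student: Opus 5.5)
We prove the five items in the order listed, using (Induction), (Ancestors) and \eqref{usefuleq}. Each later item depends on the earlier ones.

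For (1), we show by (Induction) on $\alpha$ that for all $\beta$, if $\beta \lt \alpha$ then $\Downarrow \beta \subseteq \Downarrow \alpha$. Transitivity then follows: if $\gamma \lt \beta \lt \alpha$, then $\gamma \in \Downarrow\beta \subseteq \Downarrow\alpha$. So let $\alpha = \sup A$ and suppose the claim holds for all $\alpha' \in A$. If $\beta \lt \sup A$, then by \eqref{usefuleq} there is $\alpha' \in A$ with $\beta \leq \alpha'$.
\begin{itemize}
\item If $\beta = \alpha'$, then $\Downarrow\beta = \Downarrow\alpha' \subseteq \Downarrow \sup A$ by (Ancestors).
\item If $\beta \lt \alpha'$, the induction hypothesis gives $\Downarrow \beta \subseteq \Downarrow\alpha' \subseteq \Downarrow\sup A$.
\end{itemize}
This case split on a disjunction we already possess is constructively fine.

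For (2), assume the premise and prove by (Induction) the strengthened statement $\chi(\alpha) :\equiv (\forall \beta \leq \alpha)\,\psi(\beta)$. Let $\alpha = \sup A$ and assume $\chi(\alpha')$ for all $\alpha' \in A$.
\begin{itemize}
\item Every $\beta \lt \sup A$ satisfies $\beta \leq \alpha'$ for some $\alpha' \in A$, by \eqref{usefuleq}, hence $\psi(\beta)$ holds.
\item The premise then gives $\psi(\sup A)$.
\item Together these give $\chi(\sup A)$, since $\beta \leq \sup A$ means $\beta \lt \sup A$ or $\beta = \sup A$.
\end{itemize}
So $\chi(\alpha)$ holds for all $\alpha$, and in particular $\psi(\alpha)$.

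For (3), apply (2) with $\psi(\alpha) :\equiv \neg(\alpha \lt \alpha)$. Suppose $\psi(\beta)$ for all $\beta \lt \alpha$, and suppose $\alpha \lt \alpha$. Taking $\beta = \alpha$ gives $\neg(\alpha \lt \alpha)$, a contradiction.

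For (4), reflexivity is immediate. Transitivity follows from (1) after a case distinction on the two disjunctions. For antisymmetry, suppose $\alpha \leq \beta$ and $\beta \leq \alpha$. In every case other than $\alpha = \beta$ we get $\alpha \lt \alpha$, possibly using (1), which contradicts (3). So every case yields $\alpha = \beta$ (directly or ex falso), which is constructively acceptable because we are only eliminating disjunctions we already have.

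For (5), by (Extensionality) it suffices to show $\Downarrow \sup(\Downarrow\alpha) = \Downarrow\alpha$. By \eqref{usefuleq}, $\gamma \lt \sup(\Downarrow\alpha)$ holds iff there is $\beta \lt \alpha$ with $\gamma \leq \beta$. By (1) this implies $\gamma \lt \alpha$, and conversely $\gamma \lt \alpha$ lets us take $\beta = \gamma$.

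The main obstacle is setting up the inductions in (1) and (2) correctly. They must be strengthened to statements over all ancestors, namely $\Downarrow\beta\subseteq\Downarrow\alpha$ and $(\forall\beta\leq\alpha)\psi(\beta)$, so that the induction hypothesis covers ancestors reached through parents and not only the parents themselves. The rest is bookkeeping with \eqref{usefuleq}.
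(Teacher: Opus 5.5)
Your proof is correct and follows the paper's argument essentially verbatim for (1), (2), (4) and (5); your induction hypothesis $\Downarrow\beta\subseteq\Downarrow\alpha$ in (1) is just a repackaging of the paper's statement $\gamma \lt \beta \lt \alpha \Rightarrow \gamma \lt \alpha$. The only real difference is in (3): you derive irreflexivity from the $\lt$-induction of (2) by instantiating the hypothesis at $\beta = \alpha$, whereas the paper inducts directly on $\sup A$ and uses transitivity (1) to turn $\sup A \leq \alpha \lt \sup A$ into $\alpha \lt \alpha$ — your version avoids (1) and shows that any relation admitting such an induction principle is irreflexive.
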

\begin{proof} (1) We show $\gamma \lt \beta \lt \alpha \Rightarrow \gamma\ \lt \alpha$ by induction $\alpha$. So assume $\gamma \lt \beta \lt \sup A$ and the desired statement holds for all $\alpha \in A$. Since $\beta \lt \sup A$, we have $\beta \in A$ or $\beta \lt \alpha$ for some $\alpha \in A$. In both cases we obtain $\gamma \lt \sup A$: in the former case because $\gamma \lt \beta$ and $\beta \in A$; in the latter case because we have $\alpha \in A$ and $\gamma \lt \alpha$ by induction hypothesis.

(2) We assume $(\forall \alpha) \, \big( \, (\forall \beta \lt \alpha) \, \psi(\beta) \to \psi(\alpha) \, \big)$ and prove $(\forall \beta \leq \alpha) \, \psi(\beta)$ by induction on $\alpha$. So let $A$ be a set of ordinals such that $(\forall \beta \leq \alpha) \, \psi(\beta)$ holds for all ordinals $\alpha \in A$; our aim is to prove $\psi(\sup A)$. For this purpose it suffices to prove $\psi(\beta)$ for all $\beta \lt \sup A$. But the latter means that $\beta \leq \alpha$ for some $\alpha \in A$. So we get $\psi(\beta)$ directly from our assumption.

(3) We prove this by induction on $\alpha$. So assume $\alpha \lt \alpha$ is false for all $\alpha \in A$ and $\sup A \lt \sup A$ holds. The latter means that there is some $\alpha \in A$ such that $\sup A \leq \alpha$ holds. Since $\alpha \lt \sup A$, we obtain a contradiction.

(4) follows directly from (1) and (3).

(5) In view of extensionality it suffices to prove $\Downarrow \big( \sup (\Downarrow \alpha) \big) = \Downarrow \alpha$ for all ordinals $\alpha$. This means we need to show that
\[ (\exists \gamma \lt \alpha) \, \beta \leq \gamma \Leftrightarrow \beta \lt \alpha. \]
But this follows directly from transitivity of $\lt$.
\end{proof}

\begin{rema}{ordinalsasposets}
    In what follows we will always consider $\Downarrow \alpha$ as a poset (and hence a category) with $\leq$ as its ordering.
\end{rema}    

It is sometimes convenient to consider $\sup$ as the composition of the operations $s: {\rm Ord} \to {\rm Ord}$ and $\bigcurlyvee: {\rm Pow}({\rm Ord}) \to {\rm Ord}$ from Algebraic Set Theory defined by:
\begin{eqnarray*}
    s\alpha & := & \sup \{ \alpha \} \\
    \bigcurlyvee A & := & \sup \big( \bigcup_{ \alpha \in A} \Downarrow \alpha \, \big)
\end{eqnarray*}
We will take a closer look at these operations in Appendix A; for now, it suffices to observe the following.
\begin{lemm}{decompositionofsup}
\begin{enumerate}
    \item[(1)] $\beta \lt s\alpha \Leftrightarrow \beta \leq \alpha$.
    \item[(2)] $\beta \lt \bigcurlyvee A \Leftrightarrow (\exists \alpha \in A) \, \beta \lt \alpha$.
    \item[(3)] $\sup A = \bigcurlyvee_{\alpha \in A} s\alpha$. 
    \item[(4)] $\alpha = \bigcurlyvee_{\beta \lt \alpha} s\beta$. 
\end{enumerate}
\end{lemm}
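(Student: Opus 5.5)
Each item is a direct unfolding of the definitions of $s$ and $\bigcurlyvee$, using the characterisation \eqref{usefuleq} of $\lt \sup A$, transitivity of $\lt$ (\reflemm{propoflt}(1)), and extensionality. I would prove them in order, since (3) and (4) reduce to (1) and (2).

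For (1), apply \eqref{usefuleq} with $A = \{\alpha\}$: $\beta \lt \sup\{\alpha\}$ iff there is $\gamma \in \{\alpha\}$ with $\beta \leq \gamma$, that is, iff $\beta \leq \alpha$. For (2), apply \eqref{usefuleq} with the set $B = \bigcup_{\alpha \in A} \Downarrow \alpha$. This gives $\beta \lt \bigcurlyvee A$ iff $\beta \leq \gamma$ for some $\gamma$ with $\gamma \lt \alpha$ and $\alpha \in A$. The forward direction uses transitivity, together with the trivial case $\beta = \gamma$, to get $\beta \lt \alpha$. The backward direction takes $\gamma = \beta$. One should note that $B$ is a set, by union over a set-indexed family of sets, so that $\sup B$ makes sense.

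For (3), I read $\bigcurlyvee_{\alpha \in A} s\alpha$ as $\bigcurlyvee\{s\alpha \mid \alpha \in A\}$, which is a set by replacement. By extensionality it suffices to compare ancestors. By (2) and then (1),
\[ \beta \lt \bigcurlyvee_{\alpha\in A} s\alpha \iff (\exists \alpha \in A)\, \beta \lt s\alpha \iff (\exists \alpha \in A)\, \beta \leq \alpha, \]
and by \eqref{usefuleq} the last condition is exactly $\beta \lt \sup A$.

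For (4), combine (3) with $A = \Downarrow\alpha$ and \reflemm{propoflt}(5). This gives $\alpha = \sup(\Downarrow\alpha) = \bigcurlyvee_{\beta \lt \alpha} s\beta$.

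There is no real obstacle here. The only points needing care are making the indexed-$\bigcurlyvee$ notation precise and the set-existence checks for the unions and images involved. These are available in \textbf{CZF}.
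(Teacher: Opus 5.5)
Your proof is correct and follows the same route as the paper. Items (1) and (2) are read off from the characterisation (\ref{usefuleq}) of the ancestors of $\sup A$, using transitivity for (2); item (3) follows by extensionality from (1), (2) and (\ref{usefuleq}); and item (4) is the instance $A := \Downarrow \alpha$ of (3) combined with $\sup(\Downarrow \alpha) = \alpha$, with your set-existence remarks being a harmless elaboration the paper leaves implicit.
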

\begin{proof}
    Item (1) follows directly from the equivalence in (\ref{usefuleq}), while item (2) also uses transitivity of $\lt$. To prove (3) we use extensionality, the same equivalence and items (1) and (2). Item (4) is the special case of (3) for $A := \Downarrow \alpha$, using item (5) from the previous lemma.
\end{proof}    

\section{Initial algebras for endofunctors}

\begin{rema}{ccocomplete} In the remainder of the paper \ct{C} will be a cocomplete category with chosen colimits: so we assume that there is an operation assigning to each small diagram in \ct{C} a colimiting cocone.
\end{rema}

The goal of this section is to give a constructive proof of the following result using the theory of ordinals introduced in the previous section.

\begin{theo}{adamek}{\rm (Ad\'amek)}
    Let $F$ be an endofunctor on a cocomplete category \ct{C}, and $\lambda$ be a limit ordinal. If $F$ preserves colimits of shape $\Downarrow \lambda$, then $F$ has an initial algebra. 
\end{theo}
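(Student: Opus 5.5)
The plan is to run the standard ``initial chain'' construction, but indexed so that it never asks whether an ordinal is zero, a successor or a limit. By recursion on ordinals (\reflemm{propoflt}(2)) I will define, for every $\alpha$, an object
\[ W_\alpha := \mathrm{colim}_{\beta \lt \alpha} F W_\beta , \]
a colimit over the poset $\Downarrow \alpha$ in the sense of \refrema{ordinalsasposets}. This uses no case split: the diagram is simply the one indexed by all ancestors. To make this a diagram I need transition maps, and I will define them simultaneously with the objects.

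For $\gamma \lt \beta$, transitivity gives $\Downarrow \gamma \subseteq \Downarrow \beta$. So the coprojections $c^\beta_\delta : F W_\delta \to W_\beta$ for $\delta \lt \gamma$ form a cocone under the diagram defining $W_\gamma$. This cocone induces $w_{\gamma\beta} : W_\gamma \to W_\beta$, and the transition maps in the diagram for $W_\alpha$ are $F(w_{\gamma\beta})$.

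The recursion must therefore produce, at stage $\alpha$, the following data and facts:
\begin{itemize}
\item the object $W_\alpha$;
\item the maps $w_{\gamma\alpha}$ for $\gamma \lt \alpha$;
\item the identities $w_{\gamma\alpha} \circ c^\gamma_\delta = c^\alpha_\delta$;
\item the functoriality $w_{\gamma\alpha} \circ w_{\delta\gamma} = w_{\delta\alpha}$.
\end{itemize}
Functoriality follows from the uniqueness of maps out of a colimit, since both sides agree on every coprojection. Functoriality below $\alpha$ is exactly what is needed for the diagram defining $W_\alpha$ to be a functor on $\Downarrow\alpha$. This is a routine but slightly delicate simultaneous $\lt$-recursion. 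The chosen colimits from \refrema{ccocomplete} make it a genuine definition rather than a choice.

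Next I need the hypothesis on $\lambda$. I will take ``limit'' to mean that $\beta \lt \lambda$ implies $s\beta \lt \lambda$ (whatever the paper's definition is, this should follow from it). The goal is to show that the two diagrams $\beta \mapsto W_\beta$ and $\beta \mapsto F W_\beta$ on $\Downarrow\lambda$ have isomorphic colimits. They are related by two families of maps:
\begin{itemize}
\item the coprojections $c^{s\beta}_\beta : F W_\beta \to W_{s\beta}$, which exist because $\beta \lt s\beta$ by \reflemm{decompositionofsup}(1);
\item the maps $W_\beta \to F W_\beta$ induced by the cocone $F(w_{\delta\beta}) : F W_\delta \to F W_\beta$ for $\delta \lt \beta$.
\end{itemize}
Composing these in either order gives the transition maps $W_\beta \to W_{s\beta}$ and $F W_\beta \to F W_{s\beta}$. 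This is the usual interleaving argument, and since $s\beta \lt \lambda$ it shows
\[ W_\lambda = \mathrm{colim}_{\beta\lt\lambda} F W_\beta \cong \mathrm{colim}_{\beta \lt \lambda} W_\beta . \]
Because $F$ preserves colimits of shape $\Downarrow\lambda$, we get $F(\mathrm{colim}_{\beta\lt\lambda} W_\beta) \cong \mathrm{colim}_{\beta\lt\lambda} F W_\beta = W_\lambda$. This yields an isomorphism $\iota : F W_\lambda \to W_\lambda$, and I will check that on each $F W_\beta$ it restricts to $c^\lambda_\beta$.

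For initiality, let $x : FX \to X$ be an algebra. By the same recursion I define $h_\alpha : W_\alpha \to X$ as the map out of the colimit with components $x \circ F(h_\beta)$ for $\beta \lt \alpha$. Compatibility $h_\beta \circ w_{\gamma\beta} = h_\gamma$ is proved by induction, again using uniqueness out of colimits. The map $h_\lambda$ is then an algebra map $(W_\lambda,\iota) \to (X,x)$, which can be checked on the coprojections $F W_\beta$. For uniqueness, suppose $g$ is any algebra map. I will prove $g \circ w_{\alpha\lambda} = h_\alpha$ for all $\alpha \lt \lambda$ by $\lt$-induction, and then conclude $g = h_\lambda$ by testing on coprojections.

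The step I expect to be the main obstacle is the interleaving isomorphism, together with checking that $\iota$ really is the algebra structure compatible with the $c^\lambda_\beta$. All the bookkeeping about which cocone induces which map concentrates there. It is also the only place where the limit hypothesis is used, and it must be used without any classification of ordinals.
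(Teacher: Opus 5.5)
Your recursive construction of $W_\alpha=\mathrm{colim}_{\beta<\alpha}FW_\beta$ and your initiality argument via the maps $h_\alpha$ are sound; they amount to the paper's progressive iterations and its proof that these are initial among algebraic chains. The gap is in the step you flagged: the use of the limit hypothesis. The paper defines $\lambda$ to be a limit ordinal when $\Downarrow\lambda$ is directed: for all $\alpha,\beta<\lambda$ there is $\gamma<\lambda$ with $\alpha,\beta<\gamma$. Your replacement, $\beta<\lambda\Rightarrow s\beta<\lambda$, does \emph{not} follow from this constructively. From $\beta<\gamma$ you only get $\Downarrow s\beta\subseteq\Downarrow\gamma$, i.e.\ $s\beta\cleq\gamma$, not $s\beta\leq\gamma$. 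Here is a concrete example. Take any proposition $p$ and put $\Omega_p:=\sup\{x\in\{0\} : p\}$, $\gamma:=\sup\{0,\Omega_p\}$ and $\lambda:=\sup\{s^n\gamma : n\in\mathbb{N}\}$. Then $\Downarrow\lambda=\{s^n\gamma : n\in\mathbb{N}\}\cup\{0,\Omega_p\}$ is directed, since any two elements lie below some $s^n\gamma$. So $\lambda$ is a limit ordinal in the paper's sense and $0<\lambda$. But $s0<\lambda$ reduces to $s0\leq\gamma$, i.e.\ to $s0=\Omega_p$ or $s0=\gamma$, which is equivalent to $p\vee\neg p$.

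Even granting successor-closure, the ``usual interleaving argument'' does not go through as sketched. You want a map $\mathrm{colim}_{\Downarrow\lambda}FW\to\mathrm{colim}_{\Downarrow\lambda}W$ with components $i_{s\beta}\circ c^{s\beta}_\beta$, where the $i$'s are the coprojections of $\mathrm{colim}_{\Downarrow\lambda}W$. For this you must check the cocone condition for $\delta<\beta<\lambda$, which reduces to $i_\beta\circ c^\beta_\delta=i_{s\delta}\circ c^{s\delta}_\delta$. Classically this comes from $s\delta\leq\beta$, i.e.\ monotonicity of $s$ on $\Downarrow\lambda$. By the same example, with $\delta=0$ and $\beta=\gamma$, that is not available constructively.

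The missing idea is to use directedness itself. Send $FW_\beta$ into $\mathrm{colim}_{\Downarrow\lambda}W$ via $i_\gamma\circ c^\gamma_\beta$ for \emph{any} $\gamma$ with $\beta<\gamma<\lambda$. Then show independence of $\gamma$ by passing to a common upper bound $\gamma''<\lambda$ of two such choices. This is exactly the key step of the paper's proof. The paper uses it, together with preservation of the colimit by $F$, to define $i\colon FI\to I$ directly on $I=\mathrm{colim}_{\Downarrow\lambda}X$. It never needs your $W_\lambda$ or the invertibility of $i$. With this fix your route also works, since directedness gives $W_\lambda\cong\mathrm{colim}_{\Downarrow\lambda}W$.
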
  

To make sense of this result we need a good constructive notion of limit ordinal. The notion that makes our proof work is the following.

\begin{defi}{limitordinal} An ordinal $\lambda$ will be called a \emph{limit ordinal} if for any $\alpha, \beta \lt \lambda$ there is an ordinal $\gamma \lt \lambda$ such that $\alpha, \beta \lt \gamma$.
\end{defi}

We will give a detailed proof of this result, despite the fact that with a slightly different presentation, the proof can also be found in the work of Pitts and Steenkamp \cite{pittssteenkamp21}. The reason we include a detailed proof here is because the results in the subsequent sections build on this argument.  Like Pitts and Steenkamp, we will use the language of algebraic chains, with the caveat that constructively they are not chains, because they are not linearly ordered.  (See \cite{pittssteenkamp21,BourkeEquipping,bergetal25}; the method is due to Koubek and Reiterman \cite{Koubek1979Categorical}.) However, instead of their term ``inflationary iteration'' we prefer ``progressive iteration'', because it sounds more positive.

In the remainder of this section $F: \ct{C} \to \ct{C}$ is an endofunctor on a cocomplete category \ct{C}.

\begin{defi}{simplealgebraicchain} Let $\alpha$ be an ordinal. An \emph{algebraic chain} of length $\alpha$ is a pair $(X, x)$ consisting of:
\begin{enumerate}
    \item[(a)] a functor $X: \Downarrow \alpha \to \ct{C}$,
    \item[(b)] for any $\gamma \lt \beta \lt \alpha$ a map $x_{\gamma}^{\beta}: FX_\gamma \to X_\beta$ in \ct{C},
\end{enumerate}
such that for all $\delta \lt \gamma \lt \beta \lt \alpha$ the two diagrams on the left commute:
    \begin{displaymath}
        \begin{array}{ccc}
        \begin{tikzcd}
            FX_\gamma \ar[r, "x_{\gamma}^{\beta}"] & X_\beta \\
            FX_\delta \ar[u, "FX^\gamma_\delta"] \ar[ur, "x_{\delta}^{\beta}"']
        \end{tikzcd} &
        \begin{tikzcd}
            & X_\beta \\
            FX_\delta \ar[r, "x^\gamma_\delta"'] \ar[ur, "x_{\delta}^{\beta}"] & X_\gamma \ar[u, "X^\beta_\gamma"']
        \end{tikzcd} & 
        \begin{tikzcd}
            FX_\gamma \ar[d, "x^\beta_\gamma"] \ar[r, "F\varphi_\gamma"] & FY_\gamma \ar[d, "y^\beta_\gamma"] \\
            X_\beta \ar[r, "\varphi_\beta"] & Y_\beta
        \end{tikzcd} 
    \end{array}
    \end{displaymath}
    A morphism $\varphi: (X,x) \to (Y, y)$ of algebraic chains of length $\alpha$ is a natural transformation $\varphi: X \to Y$ making the diagram on the right commute for any $\gamma \lt \beta \lt \alpha$.
\end{defi}
  
\begin{lemm}{simplediagram}
    Let $(X,x)$ be an algebraic chain of length $\alpha$. If $\beta \leq \alpha$, then there is a diagram $\Downarrow \beta \to \ct{C}$ obtained by sending $\gamma$ to $FX_\gamma$ and $\delta \leq \gamma$ to $FX_\delta^\gamma$. If $\beta \lt \alpha$, then we have a cocone on this diagram given by $(X_\beta, x_\gamma^\beta: FX_\gamma \to X_\beta)$.
\end{lemm}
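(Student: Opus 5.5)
First I need to see that $\Downarrow \beta$ is a subposet of $\Downarrow \alpha$ whenever $\beta \leq \alpha$. If $\beta = \alpha$ this is trivial. If $\beta \lt \alpha$ and $\gamma \lt \beta$, then $\gamma \lt \alpha$ by transitivity of $\lt$ (\reflemm{propoflt}(1)). Moreover, the order $\leq$ on $\Downarrow \beta$ is the restriction of the order $\leq$ on $\Downarrow \alpha$, since both are just the global relation $\leq$ on ordinals (\reflemm{propoflt}(4), \remaref{ordinalsasposets}). So $\Downarrow\beta \hookrightarrow \Downarrow\alpha$ is a full inclusion of posets, and therefore a functor. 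The first claim follows by composing three functors:
\[ \Downarrow\beta \hookrightarrow \Downarrow \alpha \xrightarrow{X} \ct{C} \xrightarrow{F} \ct{C}. \]
This composite sends $\gamma$ to $FX_\gamma$ and the arrow $\delta \leq \gamma$ to $FX^\gamma_\delta$. Functoriality of the composite gives preservation of identities and composition, so nothing more needs to be checked.

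For the second claim, assume $\beta \lt \alpha$. The candidate legs are the maps $x^\beta_\gamma : FX_\gamma \to X_\beta$ for $\gamma \lt \beta$. These are supplied by clause (b) of \refdefi{simplealgebraicchain}, which applies because $\gamma \lt \beta \lt \alpha$.

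It remains to check naturality: for $\delta \leq \gamma$ in $\Downarrow\beta$ we need $x^\beta_\gamma \circ FX^\gamma_\delta = x^\beta_\delta$. I would not split into the cases $\delta = \gamma$ and $\delta \lt \gamma$, since $\delta \leq \gamma$ is a disjunction by definition and is not a decidable question to be settled. Instead I argue by cases on the given proof of the disjunction.
\begin{itemize}
\item If $\delta = \gamma$, then $X^\gamma_\gamma$ is the identity and the equation is trivial.
\item If $\delta \lt \gamma$, then $\delta \lt \gamma \lt \beta \lt \alpha$, and the required equation is exactly the commuting left-hand triangle in \refdefi{simplealgebraicchain}.
\end{itemize}
Hence $(X_\beta, (x^\beta_\gamma)_{\gamma \lt \beta})$ is a cocone.

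There is no real obstacle here. The only points needing constructive care are the ones already handled above: using transitivity to embed $\Downarrow\beta$ in $\Downarrow\alpha$, and arguing by cases on the proof of $\delta\leq\gamma$ rather than appealing to a case split that would be classical.
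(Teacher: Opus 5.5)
Your proof is correct. The paper states this lemma without proof, and your argument is exactly the routine verification it leaves implicit: restrict $X$ along the inclusion $\Downarrow\beta \subseteq \Downarrow\alpha$ (which follows from transitivity of $<$), compose with $F$, and get the cocone condition from the left-hand triangle in the definition of an algebraic chain, where the case $\delta = \gamma$ is trivial by functoriality.
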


\begin{defi}{simpleprogiteration}
    An algebraic chain $(X, x)$ of length $\alpha$ will be called a \emph{progressive iteration of $F$ over $\alpha$} if for each $\beta \lt \alpha$ the cocone from the lemma above is the chosen colimit.
\end{defi}

\begin{lemm}{restrictionofsimpleprog}
  If $(X, x)$ is an algebraic chain of length $\alpha$ and $\beta \lt \alpha$, then so is \[ (X, x) \upharpoonright \beta = (X \upharpoonright \Downarrow \beta, x \upharpoonright \Downarrow \beta). \] Moreover, if $(X, x)$ is a progressive iteration of $F$, then so is $(X, x) \upharpoonright \beta$.
\end{lemm}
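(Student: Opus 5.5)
The statement is essentially a bookkeeping fact, and the plan is to check it directly against \refdefi{simplealgebraicchain} and \refdefi{simpleprogiteration}. The one ingredient that is not purely formal is transitivity of $\lt$ (item (1) of \reflemm{propoflt}). It guarantees that $\Downarrow \beta \subseteq \Downarrow \alpha$ whenever $\beta \lt \alpha$: if $\gamma \lt \beta$ and $\beta \lt \alpha$ then $\gamma \lt \alpha$.

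For the first claim, I would first observe that $\Downarrow \beta$ is a full subposet of $\Downarrow \alpha$, since both carry the order $\leq$ (\refrema{ordinalsasposets}). Hence the inclusion is a functor, and $X \upharpoonright \Downarrow \beta$ is just the composite of $X$ with this inclusion; this gives item (a). For item (b), given $\delta \lt \gamma \lt \beta$ we have $\gamma \lt \alpha$ by transitivity, so the triple $\delta \lt \gamma \lt \beta$ is also a triple below $\alpha$ and $x^\beta_\gamma$ is defined. Taking the maps $x^{\gamma'}_{\gamma}$ for $\gamma \lt \gamma' \lt \beta$ as the restricted family, the two commutativity conditions for $\delta \lt \gamma \lt \gamma' \lt \beta$ are literally instances of the same conditions for $(X,x)$, again because every such triple lies below $\alpha$.

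For the second claim, take $\gamma \lt \beta$. The diagram of \reflemm{simplediagram} for the restricted chain at $\gamma$ sends $\delta \lt \gamma$ to $FX_\delta$, and the cocone is $(X_\gamma, x^\gamma_\delta)$. Both depend only on values of $X$ and $x$ at ordinals $\leq \gamma$, which lie in $\Downarrow \beta$. So the diagram and cocone are literally the same as those for $(X,x)$ at $\gamma \lt \alpha$. By hypothesis, that cocone is the chosen colimit, and therefore so is this one.

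I expect no genuine obstacle here. The only point needing care is to make explicit that transitivity of $\lt$ is what makes the restriction well defined. In particular, one should not tacitly assume a linear order or any comparability between ordinals; none is needed.
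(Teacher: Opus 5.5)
Your proposal is correct. The paper states this lemma without proof as a routine check, and your direct verification is exactly the intended argument: transitivity of $\lt$ gives $\Downarrow \beta \subseteq \Downarrow \alpha$, the structure maps and commutativity conditions are instances of those for $(X,x)$, and the diagram and cocone at each $\gamma \lt \beta$ coincide with those for $(X,x)$, so they are the chosen colimit. One cosmetic slip: in item (b) the maps you need are the $x^{\gamma'}_{\gamma}$ with $\gamma \lt \gamma' \lt \beta$, not $x^\beta_\gamma$ (which is defined but not part of the restricted chain), as your next sentence in fact says.
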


\begin{prop}{simpleprogitinitial}
    If an algebraic chain $(X, x)$ is a progressive iteration of $F$ over $\alpha$, then it is initial in the category of algebraic chains.
\end{prop}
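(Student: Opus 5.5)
Given an arbitrary algebraic chain $(Y,y)$ of length $\alpha$, I will build the unique morphism $\varphi:(X,x)\to(Y,y)$ componentwise by $\lt$-induction (\reflemm{propoflt}(2)), proving existence and uniqueness together. For $\beta \lt \alpha$ I assume that for every $\gamma \lt \beta$ the component $\varphi_\gamma: X_\gamma \to Y_\gamma$ is defined and uniquely determined by the conditions below. It is convenient to prove a statement about restrictions: for each $\beta \lt \alpha$ there is a unique morphism $(X,x)\upharpoonright s\beta \to (Y,y)\upharpoonright s\beta$. These restrictions are algebraic chains (and $(X,x)\upharpoonright s\beta$ is again a progressive iteration) by \reflemm{restrictionofsimpleprog}, since $\Downarrow s\beta = \{\gamma : \gamma \leq \beta\}$ by \reflemm{decompositionofsup}(1). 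I will then glue these morphisms into a morphism over all of $\Downarrow \alpha$, using uniqueness to get compatibility.

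\textbf{Inductive step.} By \reflemm{simplediagram} and the progressive iteration assumption, $X_\beta$ is the colimit of $\gamma \mapsto FX_\gamma$ over $\Downarrow\beta$, with colimit injections $x^\beta_\gamma$. The maps $y^\beta_\gamma \circ F\varphi_\gamma : FX_\gamma \to Y_\beta$ form a cocone. To check this, take $\delta \lt \gamma$. By naturality of $\varphi$ on $\Downarrow\beta$ (induction hypothesis) and the first triangle for $(Y,y)$,
\[ y^\beta_\gamma \circ F\varphi_\gamma \circ FX^\gamma_\delta = y^\beta_\gamma \circ FY^\gamma_\delta \circ F\varphi_\delta = y^\beta_\delta \circ F\varphi_\delta. \]
Let $\varphi_\beta$ be the induced map. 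The square on the right in \refdefi{simplealgebraicchain} says exactly that $\varphi_\beta \circ x^\beta_\gamma = y^\beta_\gamma \circ F\varphi_\gamma$ for all $\gamma \lt \beta$. So this condition forces $\varphi_\beta$, which gives uniqueness once the lower components are unique.

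\textbf{Naturality.} For $\gamma \lt \beta$ I must show $\varphi_\beta \circ X^\beta_\gamma = Y^\beta_\gamma \circ \varphi_\gamma$. Both sides are maps out of the colimit $X_\gamma$, so it suffices to compare them after precomposing with each $x^\gamma_\delta$, $\delta \lt \gamma$. For the left side, the second triangle for $X$ gives $X^\beta_\gamma x^\gamma_\delta = x^\beta_\delta$, so it becomes $\varphi_\beta x^\beta_\delta = y^\beta_\delta F\varphi_\delta$. For the right side, the defining property of $\varphi_\gamma$ and the second triangle for $Y$ give $Y^\beta_\gamma y^\gamma_\delta F\varphi_\delta = y^\beta_\delta F\varphi_\delta$. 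The two agree.

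The main point needing care is that no case split on zero, successor or limit is used. The colimit over $\Downarrow\beta$ handles all cases uniformly; when $\Downarrow\beta$ is empty it is the initial object. The remaining delicacy is formulating the induction so that it produces a \emph{function} $\beta \mapsto \varphi_\beta$ constructively. I will do this by proving by induction that for each $\beta\lt\alpha$ there exists a \emph{unique} compatible family on $\Downarrow s\beta$. Uniqueness ensures these families agree on overlaps, and then unique choice (valid in \textbf{CZF}) assembles them into the global $\varphi$. Uniqueness of $\varphi$ globally follows from the same uniqueness clause.
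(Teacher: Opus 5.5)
Your proposal is correct and is essentially the paper's proof. You define $\varphi_\beta$ by recursion as the map out of the colimit $X_\beta$ induced by the cocone $y^\beta_\gamma \circ F\varphi_\gamma$, checking the cocone condition via naturality of the earlier components and the first triangle for $(Y,y)$, and you get naturality by precomposing with the injections $x^\gamma_\delta$ of $X_\gamma$, exactly as the paper does. Your extra bookkeeping (unique morphisms on $\Downarrow s\beta$, glued by uniqueness) only makes explicit the recursion and the uniqueness that the paper leaves implicit. One small nitpick: from $\beta < \alpha$ you only get $\Downarrow s\beta \subseteq \Downarrow \alpha$, not $s\beta < \alpha$, so the restriction lemma does not literally apply; restricting to a downward closed subset is nevertheless evidently harmless.
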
 
\begin{proof}
    Suppose $(X,x)$ and $(Y, y)$ are algebraic chains and $(X, x)$ is a progressive iteration of $F$ over $\alpha$. We will construct by recursion on $\beta \lt \alpha$ maps $\varphi_\beta: X_\beta \to Y_\beta$ making the following two diagrams commute
    \begin{equation} \label{construction}
        \begin{array}{ccc}
            \begin{tikzcd}
                X_\gamma \ar[d, "X_\gamma^\beta"] \ar[r, "\varphi_\gamma"] & Y_\gamma \ar[d, "Y_\gamma^\beta"] \\
                X_\beta \ar[r, "\varphi_\beta"] & Y_\beta 
            \end{tikzcd} & & 
            \begin{tikzcd}
                FX_\gamma \ar[d, "x_\gamma^\beta"] \ar[r, "F\varphi_\gamma"] & FY_\gamma \ar[d, "y_\gamma^\beta"] \\
                X_\beta \ar[r, "\varphi_\beta"] & Y_\beta 
            \end{tikzcd}
        \end{array}
    \end{equation}
    for any $\gamma \lt \beta$. Since $(X,x)$ is a progressive iteration, a unique map $\varphi_\beta$ making the diagram on the right commutes will exist as soon as $(Y_\beta, y_\gamma^\beta \circ F\varphi_\gamma)$ is a cocone on the diagram defined in the lemma above. This is the case, because for any $\delta \lt \gamma$ we have
        \begin{align*}
        y_\gamma^\beta \circ F\varphi_\gamma \circ FX_\delta^\gamma & = y_\gamma^\beta \circ FY^\gamma_\delta \circ F\varphi_\delta \quad & \textrm{(induction hypothesis)}\\
        & = y_\delta^\beta \circ F\varphi_\delta & \textrm{($(Y, y)$ algebraic chain)}
        \end{align*}

    It remains to show that the unique map $\varphi_\beta$ making the diagram on the right commute in (\ref{construction}) makes the diagram on the left commute as well. For this we use that $X_\gamma$ is a colimit and therefore to prove that the two composition along the outsides of the square on the left commute, it suffices to prove that they commute upon precomposition with any $x_\delta^\gamma$ for $\delta \lt \gamma$. To see that that indeed happens, we calculate:
    \begin{align*}
        \varphi_\beta \circ X^\beta_\gamma \circ x_\delta^\gamma & =  \varphi_\beta \circ x_\delta^\beta \quad & \textrm{($(X,x)$ algebraic chain)}\\
        & = y_\delta^\beta \circ F\varphi_\delta & \textrm{(righthand square in (\ref{construction}) commutes)} \\
        & = Y_\gamma^\beta \circ y_\delta^\gamma \circ F\varphi_\delta & \textrm{($(Y, y)$ algebraic chain)} \\
        & = Y^\beta_\gamma \circ \varphi_\gamma \circ x_\delta^\gamma & \textrm{(righthand square in (\ref{construction}) commutes)} 
    \end{align*}
    This finishes the proof.
\end{proof}

\begin{prop}{existenceofuniquesimpleprogit}
   For each ordinal $\alpha$ there exists a unique progressive iteration of $F$ over $\alpha$.
\end{prop}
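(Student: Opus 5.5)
We prove existence and uniqueness together by $\lt$-induction on $\alpha$, using \reflemm{propoflt}(2). Uniqueness is the easier half, so we dispose of it first. Progressive iterations are built from \emph{chosen} colimits, so two progressive iterations $(X,x)$ and $(Y,y)$ over $\alpha$ should be literally equal. We show $X_\beta = Y_\beta$ and $x^\beta_\gamma = y^\beta_\gamma$ for all $\gamma \lt \beta \lt \alpha$ by $\lt$-induction on $\beta$. If the data agree below $\beta$, then the diagrams $\Downarrow\beta \to \ct{C}$ of \reflemm{simplediagram} coincide, since they consist of $FX_\gamma$ and $FX^\gamma_\delta$. The morphism $X^\gamma_\delta$ for $\delta \lt \gamma \lt \beta$ is part of the data below $\beta$, and so is the identity $X^\gamma_\gamma$. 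Hence both $(X_\beta, x^\beta_-)$ and $(Y_\beta, y^\beta_-)$ are the chosen colimit of the same diagram, and are therefore equal.

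The arrows $X^\beta_\gamma$ for $\gamma \lt \beta$ are also determined. The argument in \refprop{simpleprogitinitial} shows that $X^\beta_\gamma$ is the unique map with $X^\beta_\gamma \circ x^\gamma_\delta = x^\beta_\delta$ for all $\delta \lt \gamma$. Uniqueness holds because $X_\gamma$ is a colimit, and this condition is just the second triangle in the definition of an algebraic chain. So functoriality is forced. Alternatively, \refprop{simpleprogitinitial} yields a unique isomorphism between $(X,x)$ and $(Y,y)$, and the induction above upgrades it to an identity.

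For existence, assume that for every $\beta \lt \alpha$ there is a (unique) progressive iteration $P_\beta$ over $\beta$. We glue these into one chain over $\alpha$ in three steps.
\begin{enumerate}
\item[(i)] \emph{Compatibility.} For $\gamma \lt \beta \lt \alpha$, \reflemm{restrictionofsimpleprog} shows that $P_\beta \upharpoonright \gamma$ is a progressive iteration over $\gamma$. By uniqueness at $\gamma$ (the induction hypothesis) it equals $P_\gamma$.
\item[(ii)] \emph{Objects and structure maps.} Set $X_\beta :=$ the chosen colimit of the diagram $\gamma \mapsto F(P_\beta)_\gamma$ on $\Downarrow\beta$. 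The colimit injections give $x^\beta_\gamma$. By (i), $P_\beta$ agrees with $P_\gamma$ on $\Downarrow\gamma$, so $X_\gamma$ is also the value of $P_\beta$ at $\gamma$. This makes the data consistent.
\item[(iii)] \emph{Connecting maps.} For $\gamma \lt \beta$, define $X^\beta_\gamma : X_\gamma \to X_\beta$ as the map induced from the colimit $X_\gamma$ by the cocone $(x^\beta_\delta)_{\delta \lt \gamma}$. This is a cocone by naturality of the injections into $X_\beta$ along $\delta \le \gamma$ inside $\Downarrow\beta$. Set $X^\beta_\beta := \mathrm{id}$. The case $\beta = \gamma$ in $\le$ must be handled only via the equality disjunct, never by case distinction; that is fine because $\le$ is defined as a disjunction.
\end{enumerate}

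The remaining checks are the ones we expect to be fiddly but routine.
\begin{itemize}
\item \emph{Functoriality.} We need $X^\beta_\gamma \circ X^\gamma_\delta = X^\beta_\delta$. Test both sides against the injections $x^\delta_\epsilon$ and use the universal property.
\item \emph{First triangle.} We need $x^\beta_\gamma \circ F X^\gamma_\delta = x^\beta_\delta$. This is exactly the cocone condition for the colimit at $\beta$.
\item \emph{Second triangle.} This holds by the definition of $X^\beta_\gamma$.
\item \emph{Progressiveness.} This holds by construction.
\end{itemize}

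The main obstacle is step (ii)–(iii): making the gluing well defined without assuming linearity or a trichotomy. The key is to define everything at $\beta$ purely from the restriction to $\Downarrow\beta$, via $P_\beta$, and to rely on the uniqueness half at smaller ordinals to know that the various restrictions agree. This is why existence and uniqueness must be proved simultaneously. An alternative, perhaps cleaner, formulation defines $X_\beta$ directly by $\lt$-recursion on $\beta$, as a set-theoretic recursion producing the whole progressive iteration over $s\beta$ at each stage. We would fall back on that if the gluing bookkeeping becomes unwieldy.
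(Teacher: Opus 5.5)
Your argument is correct, and it uses the same ingredients as the paper. Each new object is a chosen colimit, connecting maps are induced by the universal property of earlier colimits, and the pieces fit together because restrictions of progressive iterations are progressive iterations, which are unique. What differs is the organization.

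The paper uses \reflemm{decompositionofsup} to split the induction into two steps, carrying uniqueness along inside the induction:
\begin{enumerate}
\item[(1)] a successor step, extending the iteration over $\alpha$ to one over $s\alpha$;
\item[(2)] a gluing step for $\bigcurlyvee A$ with $A$ downward closed.
\end{enumerate}
You instead do a single $\lt$-induction step. Your (ii)--(iii) perform the paper's successor construction at every $\beta \lt \alpha$ simultaneously, and your (i) is its gluing argument. Uniqueness you prove separately, by induction on $\beta \lt \alpha$.

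Your packaging avoids having to present $\alpha$ as $\bigcurlyvee$ of a downward closed set all of whose members satisfy the hypothesis (such as $\Downarrow\alpha \cup \{ s\beta : \beta \lt \alpha \}$). It also gives a direct proof of uniqueness. The paper's split, in exchange, isolates the successor step as the only part that changes in the pointed and special variants (\refprop{existencepointedprogiteration}, \refprop{specialexistence}), so the gluing step can be reused verbatim.

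Two minor points:
\begin{enumerate}
\item[(a)] Your uniqueness proof does not use the outer induction, so existence and uniqueness need not be proved simultaneously after all.
\item[(b)] In (ii) you should state explicitly that the connecting maps of $P_\beta$ agree with your $X^\gamma_\delta$ for $\delta \lt \gamma \lt \beta$. This follows from the universal property of $X_\delta$, exactly as in your uniqueness paragraph. It is what makes $X_\beta$ the chosen colimit of the diagram of \reflemm{simplediagram} for $X$ itself, and it yields the first triangle.
\end{enumerate}
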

\begin{proof}
    We prove the statement by induction on $\alpha$. In view of \reflemm{decompositionofsup} we can split up the argument in two steps:
    \begin{enumerate}
      \item[(1)] If a unique progressive iteration over $\alpha$ exists, then the same is true for $s\alpha$.
      \item[(2)] If $A$ is a downward closed set of ordinals and for each $\alpha \in A$ a unique progressive iteration over $\alpha$ exists, then the same is true for $\bigcurlyvee A$. 
    \end{enumerate}

    (1) Suppose $(X', x')$ is the unique progressive iteration over $\alpha$. Then we define the (unique) progressive iteration $(X, x)$ of length $s\alpha$ by putting taking $(X', x')$ and extending it as follows: we define $(X_\alpha, x_{\beta}^\alpha: FX_\beta \to X_\alpha)$ to be the chosen colimit of the diagram from \reflemm{simplediagram}. 

    We still need to construct a map $X_\beta^\alpha: X_\beta \to X_\alpha$ for any $\beta \lt \alpha$. However, since $X_\beta$ is a colimit on a diagram for which $(X_\alpha, x^\alpha_\gamma)$ is a cocone, there will be unique map $X_\beta^\alpha$ such that $X_\beta^\alpha \circ x_\gamma^\beta = x_\gamma^\alpha$ for all $\gamma \lt \beta$.

    It remains to show that $X$ is still a functor; that is, that $X_\beta^\alpha \circ X_\gamma^\beta = X_\gamma^\alpha$ holds for all $\gamma \lt \beta \lt \alpha$. For this it suffices to prove that these maps become equal upon precomposition with $x_\delta^\gamma$ for any $\delta \lt \gamma$, which is easy to see:
    \[ X_\beta^\alpha \circ X_\gamma^\beta \circ x_\delta^\gamma = X_\beta^\alpha \circ x_\delta^\beta = x_\delta^\alpha = X_\gamma^\alpha \circ x_\delta^\gamma. \]

    (2) Next suppose that for each $\alpha \in A$ we have a unique progressive iteration $(X^\alpha, x^\alpha)$. We would like to define a progressive iteration of length $\bigcurlyvee A$ as follows: we put $X_\beta := X^\alpha_\beta$ where $\alpha \in A$ is such that $\beta \lt \alpha$ and $X_\gamma^\beta := (X^\alpha)_\gamma^\beta$ and $x_\gamma^\beta := (x^\alpha)_\gamma^\beta$ where $\alpha \in A$ is such that $\gamma \lt \beta \lt \alpha$. It is clear that we have no choice but to define the progressive iteration of length $\bigcurlyvee A$ this way; however, what needs to be shown is that this definition does not depend on the choice of $\alpha$.
    
    So suppose $\alpha, \alpha' \in A$ are given and $\beta \lt \alpha, \alpha'$. It follows that $\beta \in A$ and therefore we obtain from the uniqueness of progressive iterations of length $\beta$ that $(X^\alpha)\upharpoonright \beta = X^\beta = X^{\alpha'} \upharpoonright \beta$. The previous argument then tells us that we must also have $X^\alpha_\beta = X^{\alpha'}_\beta$ as well as $(X^\alpha)^\beta_\gamma = (X^{\alpha'})^\beta_\gamma$ and $(x^\alpha)^\beta_\gamma = (x^{\alpha'})^\beta_\gamma$ for all $\gamma \lt \beta$, which is exactly what we needed to prove.
\end{proof}    

\begin{theo}{adamekagain} {\rm (Constructive Ad\'amek Theorem)}
    Let \ct{C} be a cocomplete category and $\lambda$ be a limit ordinal. If $F: \ct{C} \to \ct{C}$ is an endofunctor preserving colimits of shape $\lambda$, then the initial $F$-algebra exists.
\end{theo}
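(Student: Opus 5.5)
Take the unique progressive iteration $(X,x)$ of $F$ over $\lambda$ given by \refprop{existenceofuniquesimpleprogit}, and let $L$ be the chosen colimit of $X:\Downarrow\lambda\to\ct{C}$, with coprojections $\iota_\beta: X_\beta\to L$. By hypothesis $F$ preserves this colimit, so $(FL, F\iota_\beta)$ is a colimit of $\beta\mapsto FX_\beta$ over $\Downarrow\lambda$.

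\emph{Step 1: the structure map.} Define $\ell: FL\to L$ as the unique map with $\ell\circ F\iota_\gamma=\iota_\beta\circ x^\beta_\gamma$ for any $\beta$ with $\gamma\lt\beta\lt\lambda$. Two points need checking. First, such a $\beta$ exists: the limit condition of \refdefi{limitordinal}, applied with $\alpha=\beta=\gamma$, gives $\beta\lt\lambda$ with $\gamma\lt\beta$. Second, the value does not depend on $\beta$. If $\gamma\lt\beta,\beta'$, choose $\beta''\lt\lambda$ above both; then $\iota_\beta x^\beta_\gamma=\iota_{\beta''}X^{\beta''}_\beta x^\beta_\gamma=\iota_{\beta''}x^{\beta''}_\gamma$ by the algebraic chain axioms, and similarly for $\beta'$. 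The cocone condition along $\delta\lt\gamma$ follows in the same way from the first triangle of \refdefi{simplealgebraicchain}. This is where directedness of $\Downarrow\lambda$ is really used, since constructively we cannot compare $\beta$ and $\beta'$.

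\emph{Step 2: existence of a map to any algebra.} Let $(A,a)$ be an $F$-algebra. Make it into an algebraic chain of length $\lambda$ by taking the constant functor $A$ and $y^\beta_\gamma:=a$; the axioms hold trivially. \refprop{simpleprogitinitial} gives a morphism $\varphi:(X,x)\to(A,a)$. Its components form a cocone, so they induce $h:L\to A$ with $h\iota_\beta=\varphi_\beta$. To show $h\ell=aFh$, precompose with $F\iota_\gamma$ and choose $\beta$ with $\gamma\lt\beta$. The left side becomes $\varphi_\beta x^\beta_\gamma$ and the right side becomes $aF\varphi_\gamma$, and these agree by the morphism square.

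\emph{Step 3: uniqueness, which I expect to be the main point.} Conversely, any algebra map $h:(L,\ell)\to(A,a)$ yields a morphism of algebraic chains $(h\iota_\beta)_\beta$ from $(X,x)$ to the constant chain. The required square is $h\iota_\beta x^\beta_\gamma=h\ell F\iota_\gamma=aFh F\iota_\gamma$, which uses the definition of $\ell$. By the uniqueness in \refprop{simpleprogitinitial}, $h\iota_\beta=\varphi_\beta$ for all $\beta$, and since $L$ is a colimit, $h$ is determined. Hence $(L,\ell)$ is the initial $F$-algebra.

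The delicate point is that at no stage do we use that $\lambda$ is a successor or that $\Downarrow\lambda$ is linearly ordered. We use only the directedness from \refdefi{limitordinal}, together with the fact that a cocone over $\Downarrow\lambda$ (a poset category) needs to be checked only on strict relations $\gamma\lt\beta$, because the identities commute trivially.
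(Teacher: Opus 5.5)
Your proof is correct and matches the paper's argument essentially step for step. Like the paper, you define the structure map using the fact that $F$ preserves the colimit, get well-definedness from the directedness of $\Downarrow\lambda$, and obtain both existence and uniqueness of the comparison map by viewing $(A,a)$ as a constant algebraic chain and applying \refprop{simpleprogitinitial}. You also handle the one step you leave implicit, naturality of $(h\iota_\beta)_\beta$ in Step 3, correctly: it is immediate from the cocone property of the $\iota_\beta$.
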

\begin{proof}
    Let $(X,x)$ be the unique progressive iteration of $F$ of length $\lambda$. We will construct the initial $F$-algebra by taking the colimit $I = {\rm colim} \, X$ and constructing a map $i: FI \to I$. Note that $I$ being a colimit means that it comes equipped with a colimiting cocone $(I, i_\alpha: X_\alpha \to I)$ on the diagram $X$.

    Our first step is to show that there is a unique map $i: FI \to I$ such that for any pair of ordinals $\beta \lt \alpha \lt \lambda$ we have a commutative square of the form
    \begin{equation} \label{descommsq}
        \begin{tikzcd}
            FX_\beta \ar[r, "Fi_\beta"] \ar[d, "x^\alpha_\beta"] & FI \ar[d, "i"]\\
            X_\alpha \ar[r, "i_\alpha"] & I.
        \end{tikzcd}
    \end{equation}
    For this we use that $I$ is the colimit of $X$ and this colimit is preserved by $F$: that means that in order to construct a map $FI \to I$ it suffices to construct for each $\beta \lt \lambda$ a map $h_\beta: FX_\beta \to I$ such that $h_\beta \circ FX_\gamma^\beta = h_\gamma$ holds whenever $\gamma \lt \beta \lt \alpha$. We choose $h_\beta := i_\alpha \circ x_\beta^\alpha$ where $\alpha$ is some ordinal such that $\beta \lt \alpha \lt \lambda$. The existence of such an ordinal $\alpha$ is guaranteed by our assumption that $\lambda$ is a limit ordinal. But it also guarantees this definition does not depend on the choice of $\alpha$. For suppose $\alpha, \alpha' \lt \lambda$ are such that $\beta \lt \alpha, \alpha'$. Then there is some $\alpha'' \lt \lambda$ such that $\alpha, \alpha' \lt \alpha''$ from which we obtain a commutative diagram of the following shape:
    \begin{displaymath}
        \begin{tikzcd}
            & X_{\alpha} \ar[dr, "i_{\alpha}", bend left] \ar[d, "X^{\alpha''}_\alpha"'] \\
        FX_\beta \ar[ur, "x^\alpha_\beta", bend left] \ar[r, "x^{\alpha''}_\beta"'] \ar[dr, "x^{\alpha'}_\beta"', bend right] & X_{\alpha''} \ar[r, "i_{\alpha''}"] & I \\
        & X_{\alpha'} \ar[ur, bend right, "i_{\alpha'}"'] \ar[u, "X^{\alpha'}_\alpha"] 
        \end{tikzcd}
    \end{displaymath}
    Therefore the definition of $h_\beta$ does not depend on the choice of $\alpha$. Moreover, we have
    \[ h_\beta \circ FX_\gamma^\beta = i_\alpha \circ x^\alpha_\beta \circ FX_\gamma^\beta = i_\alpha \circ x_\gamma^\alpha = h_\gamma, \]
    and therefore there exists a unique map $i: FI \to I$ making the diagram (\ref{descommsq}) commute.

    It remains to show that $(I, i)$ is the initial $F$-algebra. So let $(Y, y: FY \to Y)$ be an $F$-algebra. Note that we can also regard $(Y, y)$ as an algebraic chain of length $\lambda$ with $Y_\alpha := Y$, $Y_\beta^\alpha = 1_Y$ and $y_\beta^\alpha = y$; we will denote this chain with $(Y, y)$ as well. By \refprop{simpleprogitinitial} there exists a unique morphisms of chains $(X,x) \to (Y, y)$; in other words, there exists a unique collection of morphisms $(\varphi_\alpha: X_\alpha \to Y \, : \, \alpha \lt \lambda)$ making the two squares below commute:
    \begin{equation} \label{twosqmorphism}
        \begin{array}{ccc}
            \begin{tikzcd}
                X_\beta \ar[d, "X_\beta^\alpha"] \ar[r, "\varphi_\beta"] & Y \ar[d, "1"] \\
                X_\alpha \ar[r, "\varphi_\alpha"] & Y
            \end{tikzcd} & & 
            \begin{tikzcd}
                FX_\beta \ar[d, "x_\alpha^\beta"] \ar[r, "F\varphi_\beta"] & FY \ar[d, "y"] \\
                X_\alpha \ar[r, "\varphi_\alpha"] & Y
            \end{tikzcd}
        \end{array}
    \end{equation}
    The commutativity of the square on the left implies that there exists a unique map $\varphi: I \to Y$ such that $\varphi_\alpha = \varphi \circ i_\alpha$ for all $\alpha \lt \lambda$. To show that this is a morphism of $F$-algebras it suffices to prove that $\varphi \circ i$ and $y \circ F\varphi$ become equal upon precomposition with $Fi_\beta$ for any $\beta \lt \lambda$. However, we have
    \begin{align*}
        \varphi \circ i \circ Fi_\beta & =  \varphi \circ i_\alpha \circ x_\beta^\alpha \quad & \textrm{(commutativity of (\ref{descommsq}))}\\
        & =  \varphi_\alpha \circ x_\beta^\alpha & \textrm{(definition $\varphi$)}\\
        & =  y \circ F\varphi_\beta & \textrm{(righthand square in (\ref{twosqmorphism})}\\
        & =  y \circ F\varphi \circ Fi_\beta & \textrm{(definition $\varphi$)}
    \end{align*}
    where $\alpha$ is any ordinal such that $\beta \lt \alpha \lt \lambda$.

    Finally, if $\psi: I \to Y$ is any morphism of $F$-algebras, we can define $\psi_\alpha := \psi \circ i_\alpha$ and we can prove $\psi_\alpha = \varphi_\alpha$ by showing that the $\psi_\alpha$ also make the two diagrams in (\ref{twosqmorphism}) commute. To see this we note that
    \[ \psi_\alpha \circ X_\beta^\alpha = \psi \circ i_\alpha \circ X_\beta^\alpha = \psi \circ i_\beta = \psi_\beta, \]
    while
    \[ \psi_\alpha \circ x_\alpha^\beta = \psi \circ i_\alpha \circ x_\alpha^\beta = \psi \circ i \circ Fi_\beta = y \circ F\psi \circ Fi_\beta = y \circ F\psi_\beta. \]
    Since the $i_\alpha$ are part of a colimiting cone and hence jointly monic, this finishes the proof.
\end{proof}

\begin{coro}{monadicityFalg}
    Let \ct{C} be a cocomplete category and $\lambda$ be a limit ordinal. If $F: \ct{C} \to \ct{C}$ is an endofunctor preserving colimits of shape $\Downarrow \lambda$, then the forgetful functor
    \[ U: F\text{\rm -Alg} \to \ct{C} \]
    is monadic.
\end{coro}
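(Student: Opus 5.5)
We will show that $U$ has a left adjoint and then apply Beck's monadicity theorem. Since $U$ creates all limits that exist in \ct{C}, and creates colimits that $F$ preserves, we must check in particular that $U$ creates coequalisers of $U$-split pairs. The paper states that the technical conditions for Beck's theorem are verified in the second appendix; here we sketch that verification.

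\textbf{Left adjoint.} For an object $A$ of \ct{C}, consider the endofunctor $F_A := A + F(-)$. Coproducts with a fixed object commute with all colimits of connected shape. The poset $\Downarrow \lambda$ is connected: it is inhabited because $\lambda$ is a limit ordinal (this needs checking), and any $\alpha, \beta \lt \lambda$ have a common upper bound. Hence $F_A$ preserves colimits of shape $\Downarrow \lambda$ whenever $F$ does.

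\begin{itemize}
\item[] If inhabitation of $\Downarrow\lambda$ is not part of the definition, we instead observe that $A + (-)$ preserves colimits of shape $\Downarrow\lambda$ directly, even when $\lambda$ has no ancestors. In that case the colimit is initial and the comparison map fails to be an iso. To avoid this, we argue via the directedness condition: a colimit over a directed poset commutes with $A+(-)$ as soon as the poset is inhabited. So we reduce to that case, or simply note that the definition presumably includes, or implies in the uses made of it, that $\Downarrow\lambda$ is inhabited.
\end{itemize}

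By \reftheo{adamekagain}, $F_A$ has an initial algebra $(TA, [\eta_A, \mu_A] : A + FTA \to TA)$. A routine check then shows that $(TA, \mu_A)$ is the free $F$-algebra on $A$ with unit $\eta_A$: an $F_A$-algebra is exactly an $F$-algebra together with a map from $A$. This yields a left adjoint to $U$.

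\textbf{Beck's condition.} Let $f, g : (Y,y) \to (Z,z)$ be algebra maps, and suppose $Uf, Ug$ have a split coequaliser $e : Z \to Q$ with sections $s : Q \to Z$ and $t : Z \to Y$. Split coequalisers are absolute, so $Fe$ is a coequaliser of $Ff, Fg$. Hence there is a unique $q : FQ \to Q$ with $q \circ Fe = e \circ z$, since $e \circ z$ coequalises $Ff, Fg$ by the algebra-map property. Uniqueness of $q$ and the fact that $e$ is the coequaliser in \ct{C} show that $e$ is a coequaliser in $F$-Alg, and that $(Q,q)$ is the unique algebra structure making $e$ an algebra map. Thus $U$ creates these coequalisers.

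\textbf{Main obstacle.} The main obstacle is constructive care in the first step: showing that $A+(-)$ commutes with colimits of shape $\Downarrow\lambda$. This relies on $\Downarrow\lambda$ being inhabited and directed, which must be extracted from \refdefi{limitordinal} without case distinctions on $\lambda$. A secondary issue is that all constructions must be done with the chosen colimits, so that no choice is used in forming $T$ as a functor. This is handled by the uniqueness clauses of \refprop{existenceofuniquesimpleprogit}.
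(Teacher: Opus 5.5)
Your strategy is the paper's. The left adjoint comes from initial algebras of $X \mapsto A + FX$ via \reftheo{adamekagain}, and Beck's theorem is checked on $U$-split pairs. Your Beck step is correct and is the conceptual form of \reftheo{checkformonadicity}: your $q$ is forced to equal $e \circ z \circ Fs$, which is the paper's $x := h \circ y \circ Fs$. Because you prove creation, including uniqueness of the lifted structure, you can use Mac Lane's version of Beck's theorem and skip the separate conservativity check.

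The gap is in the left adjoint step, and your indented paragraph does not close it. It claims that $A+(-)$ preserves colimits of shape $\Downarrow\lambda$ ``even when $\lambda$ has no ancestors'', and one sentence later that the comparison map then fails to be an iso.

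\begin{itemize}
\item[] Inhabitedness of $\Downarrow\lambda$ is neither part of nor a consequence of \refdefi{limitordinal}. The ordinal $0 = \sup\emptyset$ satisfies it vacuously, and constructively there is in general no way to decide whether a limit ordinal has an ancestor.
\item[] You also cannot ``reduce to that case''. For $\lambda = 0$ the hypothesis only says that $F$ preserves the initial object, and $A + F(-)$ preserves the empty colimit only when $A$ is initial.
\item[] In fact the corollary itself fails for $\lambda = 0$. The nonempty-powerset functor $\mathcal{P}^+$ on $\mathbf{Set}$ preserves $\emptyset$. An initial $(1+\mathcal{P}^+)$-algebra $I$ would give, by Lambek's lemma, $I \cong 1 + \mathcal{P}^+ I \cong \mathcal{P} I$, contradicting Cantor. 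So $U$ has no left adjoint.
\end{itemize}

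What is missing is therefore an extra hypothesis, namely that $\Downarrow\lambda$ is inhabited, not a cleverer argument. The paper's own proof has the same hole: it simply asserts that $X \mapsto A + FX$ ``also preserves colimits of shape $\Downarrow\lambda$''. So the issue you flagged as the main obstacle is a real defect of the statement, and you should say so rather than suppose the definition ``presumably includes'' it.

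Once inhabitedness is assumed, your argument supplies exactly the justification the paper omits. $\Downarrow\lambda$ is then inhabited and directed, hence connected. All legs of a cocone under the constant diagram at $A$ coincide by directedness, and inhabitedness lets you extract the common map without choice. So that colimit is $A$, and $A+(-)$ preserves colimits of shape $\Downarrow\lambda$.
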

\begin{proof}
    To show that $U$ has a left adjoint, we need to verify that for each object $A$ in \ct{C}, the category $A \downarrow U$ has an initial object. But since this category is isomorphic to the category of algebras for the endofunctor $X \mapsto A + FX$, and this endofunctor also preserves colimits of shape $\Downarrow \lambda$, this follows from the previous result.

    In order to apply Beck's monadicity theorem \cite[Theorem 4.4.4]{borceux94ii}, it remains to check that $U$ is conservative and $U$ creates coequalizers of $U$-split pairs. We have relegated a proof of this to an appendix: see \reftheo{checkformonadicity}.
\end{proof}

\section{Initial algebras for pointed endofunctors}

In this section we will give a constructive proof of Ad\'amek's result for pointed endofunctors. That is, we will assume that the endofunctor $F: \ct{C} \to \ct{C}$ comes equipped with a natural transformation (``pointing'') $\eta: 1_\ct{C} \to F$. In this case we are interested in constructing the initial algebra for the pointed endofunctor $(F, \eta)$, which is the initial object in the full subcategory of $F$-algebras $(X, x: FX \to X)$ satisfying $x \circ \eta_X = 1_X$. We will show that these exist when $F$ preserves suitable colimits by adapting the proof from the previous section. The main challenge is to figure out how the notions of algebraic chain and progressive iteration need to be modified in this pointed setting, so that the same proof idea goes through.

In this section we fix a pointed endofunctor $(F, \eta)$ on a cocomplete category \ct{C}.

\begin{defi}{pointedchain} An \emph{algebraic chain} $(X, x)$ of length $\alpha$ will be called \emph{pointed} if not only the two diagrams on the left commute for any $\delta \lt \gamma \lt \beta \lt \alpha$ (as in any algebraic chain), but also the third one on the right commutes for any $\gamma \lt \beta \lt \alpha$:
    \begin{equation} \label{pointedchaindef}
        \begin{array}{ccc}
        \begin{tikzcd}
            FX_\gamma \ar[r, "x_{\gamma}^{\beta}"] & X_\beta \\
            FX_\delta \ar[u, "FX^\gamma_\delta"] \ar[ur, "x_{\delta}^{\beta}"']
        \end{tikzcd} &
        \begin{tikzcd}
            & X_\beta \\
            FX_\delta \ar[r, "x^\gamma_\delta"'] \ar[ur, "x_{\delta}^{\beta}"] & X_\gamma \ar[u, "X^\beta_\gamma"']
        \end{tikzcd} &
        \begin{tikzcd}
            & X_\beta \\
            X_\gamma \ar[r, "\eta_{X_\gamma}"'] \ar[ur, "X_{\gamma}^{\beta}"] & FX_\gamma \ar[u, "x^\beta_\gamma"']
        \end{tikzcd}
    \end{array}
\end{equation}
The category of pointed algebraic chains is the full subcategory of the category of algebraic chains on the chains that are pointed. In other words, we do not add conditions on the morphisms of algebraic chains that take into account the existence of the pointing.
\end{defi}

\begin{rema}{functorialityfollows} For pointed algebraic chains the functoriality of the chains and the naturality of the morphisms becomes redundant. Indeed, the functoriality of $X$ follows from commutative triangles above:
    \begin{align*}
        X^\beta_\gamma \circ X^\gamma_\delta & =  X^\beta_\gamma \circ x^\gamma_\delta \circ \eta_{X_\delta} & \textrm{(third triangle)}\\
        & = x^\beta_\delta \circ \eta_{X_\delta} & \textrm{(second triangle)} \\
        & = X^\beta_\delta & \textrm{(third triangle)}
    \end{align*}

    Secondly, for a morphism of chains the following two diagrams need to commute
\begin{displaymath}
    \begin{array}{cc}
    \begin{tikzcd}
        X_\gamma \ar[d, "X^\beta_\gamma"] \ar[r, "\varphi_\gamma"] & Y_\gamma \ar[d, "Y^\beta_\gamma"] \\
        X_\beta \ar[r, "\varphi_\beta"] & Y_\beta
    \end{tikzcd} &
    \begin{tikzcd}
        FX_\gamma \ar[d, "x^\beta_\gamma"] \ar[r, "F\varphi_\gamma"] & FY_\gamma \ar[d, "y^\beta_\gamma"] \\
        X_\beta \ar[r, "\varphi_\beta"] & Y_\beta
    \end{tikzcd} 
\end{array}
\end{displaymath}
whenever $\gamma \lt \beta \lt \alpha$.
However, for pointed algebraic chains the commutativity of the naturality square on the left follows from the commutativity of the diagram on the right: indeed, it follows from the new requirement for pointed algebraic chains that we obtain the diagram on the left from the diagram on the right by sticking the naturality square
\begin{displaymath}
    \begin{tikzcd}
        X_\gamma \ar[d, "\eta_{X_\gamma}"] \ar[r, "\varphi_\gamma"] & FY_\gamma \ar[d, "\eta_{Y_\gamma}"] \\
        FX_\gamma \ar[r, "F\varphi_\gamma"] & FY_\gamma
    \end{tikzcd} 
\end{displaymath}
on top of it.
\end{rema}    

\begin{defi}{categoryD} Let $D(\beta)$ be the category which has as objects ordinals $\gamma \lt \beta$ and which has two morphisms $0,1:\delta \to \gamma$ if $\delta \lt \gamma$ and an identity morphism $1: \gamma \to \gamma$ (and no other morphisms). Composition is computed by taking the minimum.
\end{defi} 

\begin{lemm}{coconesfromchains}
Let $(X,x)$ be a pointed algebraic chain of length $\alpha$. If $\beta \leq \alpha$, then there is a diagram $D(\beta) \to \ct{C}$ obtained by sending $\gamma$ to $FX_\gamma$ and $0: \delta \to \gamma$ to $\eta_{X_\gamma} \circ x_\delta^\gamma$ and $1: \delta \to \gamma$ to $FX_\delta^\gamma$. If $\beta \lt \alpha$, then we have a cocone on this diagram given by $(X_\beta, x_\gamma^\beta: FX_\gamma \to X_\beta)$.
\end{lemm}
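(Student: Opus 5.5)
The plan is to verify two things directly: that the assignment defines a functor $D(\beta) \to \ct{C}$, and that the maps $x^\beta_\gamma$ form a cocone when $\beta \lt \alpha$. All the needed identities come from the three triangles in (\ref{pointedchaindef}) together with naturality of $\eta$.

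\emph{Functoriality.} Since $D(\beta)$ has no composable pairs of non-identity morphisms except $\epsilon: \delta \to \gamma$ followed by $\epsilon': \gamma \to \gamma'$ with $\delta \lt \gamma \lt \gamma'$, we only need to check these four cases. Composition is the minimum, so the composite is $\min(\epsilon,\epsilon')$. Write $D(0)$ and $D(1)$ for the images of $0$ and $1$. The cases are as follows.
\begin{enumerate}
\item[(a)] $1$ then $1$ composes to $1$. This is $FX^{\gamma'}_\gamma \circ FX^\gamma_\delta = FX^{\gamma'}_\delta$, which is functoriality of $X$ with $F$ applied.
\item[(b)] $0$ then $1$ composes to $0$. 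We need $FX^{\gamma'}_\gamma \circ \eta_{X_\gamma} \circ x^\gamma_\delta = \eta_{X_{\gamma'}} \circ x^{\gamma'}_\delta$. By naturality of $\eta$ the left side is $\eta_{X_{\gamma'}} \circ X^{\gamma'}_\gamma \circ x^\gamma_\delta$, and the second triangle turns this into $\eta_{X_{\gamma'}} \circ x^{\gamma'}_\delta$.
\item[(c)] $1$ then $0$ composes to $0$. We need $\eta_{X_{\gamma'}} \circ x^{\gamma'}_\gamma \circ FX^\gamma_\delta = \eta_{X_{\gamma'}} \circ x^{\gamma'}_\delta$. This follows from the first triangle.
\item[(d)] $0$ then $0$ composes to $0$. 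We need $\eta_{X_{\gamma'}} \circ x^{\gamma'}_\gamma \circ \eta_{X_\gamma} \circ x^\gamma_\delta = \eta_{X_{\gamma'}} \circ x^{\gamma'}_\delta$. The third triangle rewrites $x^{\gamma'}_\gamma \circ \eta_{X_\gamma}$ as $X^{\gamma'}_\gamma$, and then the second triangle gives the result.
\end{enumerate}
Identities are sent to identities. These verifications only use the chain structure on objects below $\beta \leq \alpha$, so they make sense for $\beta \leq \alpha$.

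\emph{Cocone.} Assume $\beta \lt \alpha$ and $\delta \lt \gamma \lt \beta$. We need $x^\beta_\gamma \circ D(1) = x^\beta_\delta$ and $x^\beta_\gamma \circ D(0) = x^\beta_\delta$.
\begin{enumerate}
\item[(e)] For the morphism $1$ this reads $x^\beta_\gamma \circ FX^\gamma_\delta = x^\beta_\delta$, which is the first triangle.
\item[(f)] For the morphism $0$ this reads $x^\beta_\gamma \circ \eta_{X_\gamma} \circ x^\gamma_\delta = x^\beta_\delta$. The third triangle reduces the left side to $X^\beta_\gamma \circ x^\gamma_\delta$, and the second triangle gives $x^\beta_\delta$.
\end{enumerate}

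No step is a real obstacle. The only care needed is getting the composition convention right (``minimum'', so a composite involving $0$ is $0$). Case (b) is the one place where naturality of $\eta$, rather than just the triangles, is needed.
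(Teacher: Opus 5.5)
Your proof is correct and essentially the same as the paper's. The paper checks exactly your cases (b), (c) and (d) with three commutative diagrams, using naturality of $\eta$ and the three triangles as you do, and the cocone condition with the same two equations as your (e) and (f). You additionally spell out the trivial $1$-then-$1$ case and identities, which the paper leaves implicit.
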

\begin{proof}
    For any $\varepsilon \lt \delta \lt \gamma$, we have the following commutative diagrams:
    \begin{displaymath}
        \begin{array}{cc}
        \begin{tikzcd}
            FX_\varepsilon \ar[r] \ar[dr, "x^\delta_\varepsilon"] \ar[drr, bend right = 50, "x^\gamma_\varepsilon"'] & FX_\delta \ar[r, "FX^\gamma_\delta"] &  FX_\gamma \\
            & X_\delta \ar[r, "X^\gamma_\delta"] \ar[u, "\eta_{X_\delta}"'] & X_\gamma \ar[u, "\eta_{X_\gamma}"']
        \end{tikzcd} &
        \begin{tikzcd}
            FX_\varepsilon \ar[r, "FX^\delta_\varepsilon"] \ar[dr, "x^\gamma_\varepsilon"'] & FX_\delta \ar[r] \ar[d, "x^\gamma_\delta"] & FX_\gamma \\
            & X_\gamma \ar[ur, "\eta_{X_\gamma}"']
        \end{tikzcd} \\
        \begin{tikzcd}
            FX_\varepsilon \ar[r] \ar[dr, "x^\delta_\varepsilon"] \ar[drr, bend right = 50, "x^\gamma_\varepsilon"']& FX_\delta \ar[r] \ar[dr, "x^\gamma_\delta"] & FX_\gamma \\
            & X_\delta \ar[r, "X^\gamma_\delta"] \ar[u, "\eta_{X_\delta}"'] & X_\gamma \ar[u, "\eta_{X_\gamma}"']
        \end{tikzcd}  
    \end{array}
    \end{displaymath}
    This shows that we have a diagram $D(\beta) \to \ct{C}$. 

    In addition, we have
    \[ x^\beta_\gamma \circ \eta_{X_\gamma} \circ x^\gamma_\delta = X^\beta_\gamma \circ x^\gamma_\delta = x^\beta_\delta \]
    and
    \[ x^\beta_\gamma \circ FX^\gamma_\delta = x^\beta_\delta, \]
    which shows $(X_\beta, x_\gamma^\beta: FX_\gamma \to X_\beta)$ is a cocone on this diagram.
\end{proof}

\begin{defi}{canonicalchain}
A pointed algebraic chain $(X, x)$ will be called a \emph{pointed progressive iteration} if for each $\beta \lt \alpha$ the cocone from \reflemm{coconesfromchains} is the chosen colimit.
\end{defi}

\begin{lemm}{restrictionofpointedchains}
    If $(X,x)$ is a pointed algebraic chain of length $\alpha$ and $\beta \leq \alpha$, then $(X,x) \upharpoonright \beta = (X \upharpoonright \Downarrow \beta, x \upharpoonright \Downarrow \beta)$ is a pointed algebraic chain of length $\beta$. This chain will be a pointed progressive iteration whenever $(X,x)$ is.
   \end{lemm}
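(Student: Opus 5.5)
The proof is a direct unwinding of the definitions, using only the fact that $\Downarrow \beta \subseteq \Downarrow \alpha$ whenever $\beta \leq \alpha$. Since $\lt$ is transitive (\reflemm{propoflt}(1)), any $\gamma \lt \beta$ also satisfies $\gamma \lt \alpha$; when $\beta = \alpha$ there is nothing to prove. So the restricted data
\[ X \upharpoonright \Downarrow \beta : \Downarrow \beta \to \ct{C}, \qquad (x \upharpoonright \Downarrow \beta)^{\gamma}_{\delta} := x^{\gamma}_{\delta} \quad (\delta \lt \gamma \lt \beta) \]
is well defined. Here $X \upharpoonright \Downarrow \beta$ is the composite of $X$ with the inclusion of the full subposet $\Downarrow \beta \hookrightarrow \Downarrow \alpha$, so it is still a functor.

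First I check that the restriction is a pointed algebraic chain. There are three triangles in (\ref{pointedchaindef}) to verify. Each one concerns indices $\delta \lt \gamma \lt \beta' \lt \beta$ (respectively $\gamma \lt \beta' \lt \beta$). By transitivity these indices also lie below $\alpha$, so every such triangle is literally one of the triangles already assumed for $(X,x)$. Hence all three commute.

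Next, suppose $(X,x)$ is a pointed progressive iteration and let $\beta' \lt \beta$. Two observations settle this case.

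First, the diagram $D(\beta') \to \ct{C}$ of \reflemm{coconesfromchains} built from the restricted chain is identical to the one built from $(X,x)$. This is because it only uses the objects $FX_\gamma$ and the maps $\eta_{X_\gamma}\circ x^\gamma_\delta$ and $FX^\gamma_\delta$ for $\delta \lt \gamma \lt \beta'$, and these are unchanged by restriction.

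Second, the cocone $(X_{\beta'}, x^{\beta'}_\gamma)$ is likewise unchanged.

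Since $\beta' \lt \alpha$, the hypothesis says this cocone is the chosen colimit of that diagram. So the restricted chain is again a pointed progressive iteration.

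The only point needing any care is the index bookkeeping: every condition on the restricted chain quantifies over ordinals below $\beta$, and we need transitivity of $\lt$ to transport them below $\alpha$. Apart from that, the argument is verbatim that of \reflemm{restrictionofsimpleprog}.
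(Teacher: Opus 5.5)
Your proposal is correct and is essentially the argument the paper intends. The paper states this lemma without proof as a routine check, and your argument is that check: $\Downarrow\beta \subseteq \Downarrow\alpha$ by transitivity of $<$, so every triangle, and every diagram $D(\beta')$ with its cocone, for the restricted chain is literally one already given for $(X,x)$.
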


\begin{prop}{fromcolimitstoinitial}
    If $(X,x)$ is a pointed progressive iteration, then $(X, x)$ is initial in the category of pointed algebraic chains.
\end{prop}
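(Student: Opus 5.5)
I will mimic the proof of \refprop{simpleprogitinitial}. Let $(X,x)$ be a pointed progressive iteration of length $\alpha$ and $(Y,y)$ any pointed algebraic chain of length $\alpha$. By \refrema{functorialityfollows}, a morphism of pointed algebraic chains $\varphi: (X,x) \to (Y,y)$ is just a family $\varphi_\beta: X_\beta \to Y_\beta$ ($\beta \lt \alpha$) making the right-hand square
\[ \varphi_\beta \circ x^\beta_\gamma = y^\beta_\gamma \circ F\varphi_\gamma \qquad (\gamma \lt \beta) \]
commute; naturality is automatic. So it suffices to show that there is exactly one family satisfying these equations. I will construct it by $\lt$-induction on $\beta \lt \alpha$ (\reflemm{propoflt}(2)), and the same induction gives uniqueness.

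For the inductive step, suppose $\varphi_\gamma$ has been constructed for all $\gamma \lt \beta$, satisfying the right-hand squares among themselves. Hence, by the argument of \refrema{functorialityfollows}, they also satisfy the naturality squares $\varphi_\gamma \circ X^\gamma_\delta = Y^\gamma_\delta \circ \varphi_\delta$ for $\delta \lt \gamma \lt \beta$. Since $X_\beta$ is the colimit of the $D(\beta)$-diagram from \reflemm{coconesfromchains}, the required $\varphi_\beta$ exists uniquely as soon as $(Y_\beta, h_\gamma := y^\beta_\gamma \circ F\varphi_\gamma)$ is a cocone on that diagram. This means checking compatibility with both arrows $0,1 : \delta \to \gamma$.

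\begin{itemize}
\item For the arrow $1$, we need $h_\gamma \circ FX^\gamma_\delta = h_\delta$. This is exactly as in \refprop{simpleprogitinitial}, using naturality of $\varphi$ below $\beta$ together with the first triangle of (\ref{pointedchaindef}) for $Y$.
\item For the arrow $0$, we need $h_\gamma \circ \eta_{X_\gamma} \circ x^\gamma_\delta = h_\delta$. Naturality of $\eta$ gives $F\varphi_\gamma \circ \eta_{X_\gamma} = \eta_{Y_\gamma} \circ \varphi_\gamma$. Then:
\begin{align*}
y^\beta_\gamma \circ F\varphi_\gamma \circ \eta_{X_\gamma} \circ x^\gamma_\delta
&= y^\beta_\gamma \circ \eta_{Y_\gamma} \circ \varphi_\gamma \circ x^\gamma_\delta \\
&= Y^\beta_\gamma \circ \varphi_\gamma \circ x^\gamma_\delta \\
&= Y^\beta_\gamma \circ y^\gamma_\delta \circ F\varphi_\delta \\
&= y^\beta_\delta \circ F\varphi_\delta .
\end{align*}
The second step uses the pointedness triangle for $Y$, the third the inductive hypothesis, and the fourth the second triangle for $Y$.
\end{itemize}

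Thus $\varphi_\beta$ exists and is unique. Because $\varphi_\beta$ is forced by the colimit property at each stage, uniqueness of the whole family follows by the same induction. Unlike the proof of \refprop{simpleprogitinitial}, no separate verification of the left (naturality) square is needed, since \refrema{functorialityfollows} makes it automatic.

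The main obstacle is organizing the recursion constructively: the family $(\varphi_\gamma)_{\gamma \lt \beta}$ must be available as a whole at stage $\beta$. I would handle this as in \refprop{existenceofuniquesimpleprogit}. I would prove by induction on $\beta \leq \alpha$ that there is a unique morphism $(X,x)\upharpoonright\beta \to (Y,y)\upharpoonright\beta$, using \reflemm{restrictionofpointedchains}. The uniqueness part of the statement is what lets the partial morphisms for different $\beta$ be glued consistently.
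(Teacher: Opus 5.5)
Your proposal is correct and follows essentially the same route as the paper: you build $\varphi_\beta$ by recursion using the colimit property of $X_\beta$, and you check that $(Y_\beta, y^\beta_\gamma \circ F\varphi_\gamma)$ is a cocone on the $D(\beta)$-diagram by the same two calculations (naturality of $\eta$, the inductive hypothesis, and the triangles for $Y$). Your explicit use of the remark that naturality is redundant for pointed chains, and your uniqueness-based gluing of partial morphisms, only spell out what the paper leaves implicit with ``as before''.
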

\begin{proof}
    Suppose $(X,x)$ is a pointed progressive iteration, and $(Y, y)$ is any pointed algebraic chain. As before, we construct maps $\varphi_\beta: X_\beta \to Y_\beta$ by induction on $\beta$, using that $X_\beta$ is a colimit and that we have already constructed maps $\varphi_\gamma: X_\gamma \to Y_\gamma$ for all $\gamma \lt \beta$ making 
    \begin{equation} \label{pointedIH}
        \begin{tikzcd}
            FX_\delta \ar[d, "x^\gamma_\delta"] \ar[r, "F\varphi_\delta"] & FY_\delta \ar[d, "y^\gamma_\delta"] \\
            X_\gamma \ar[r, "\varphi_\gamma"] & Y_\gamma
        \end{tikzcd} 
    \end{equation}
    commute whenever $\delta \lt \gamma$. To construct the unique map $\varphi_\beta$ making
    \begin{displaymath}
        \begin{tikzcd}
            FX_\gamma \ar[r, "F\varphi_\gamma"] \ar[d, "x_{\gamma}^{\beta}"] & FY_\gamma \ar[d, "y_{\gamma}^{\beta}"] \\
            X_\beta \ar[r, dotted, "\varphi_\beta"] & Y_\beta
        \end{tikzcd}
    \end{displaymath}
    commute it suffices to prove that $(Y_\beta, y^\beta_\gamma \circ F\varphi_\gamma$) is a cocone on the diagram $D(\beta) \to \ct{C}$ that we constructed above. In other words, we have to prove that \[ y^\beta_\gamma \circ F\varphi_\gamma \circ FX^\gamma_\delta = y_\delta^\beta \circ F\varphi_\delta \quad \textrm{and} \quad y^\beta_\gamma \circ F\varphi_\gamma \circ \eta_{X_\gamma} \circ x^\gamma_\delta = y_\delta^\beta \circ F\varphi_\delta. \]
    To see this, note that we have that
    \begin{align*}
        y^\beta_\gamma \circ F\varphi_\gamma \circ FX^\gamma_\delta & = y_\delta^\beta \circ FY_\delta^\gamma \circ F\varphi_\delta & \textrm{($\varphi$ natural by induction hypothesis)} \\
        & = y_\delta^\beta \circ F\varphi_\delta & \textrm{($(Y, y)$ algebraic chain)}
    \end{align*}
    as well as
    \begin{align*}
        y^\beta_\gamma \circ F\varphi_\gamma \circ \eta_{X_\gamma} \circ x^\gamma_\delta & = y_\gamma^\beta \circ \eta_{Y_\gamma} \circ \varphi_\gamma \circ x_\delta^\gamma & \textrm{(naturality of $\eta$)} \\
        & = y_\gamma^\beta \circ \eta_{Y_\gamma} \circ y_\delta^\gamma \circ F\varphi_\delta & \textrm{(commutativity of (\ref{pointedIH}))} \\
        & = y_\delta^\beta \circ F\varphi_\delta & \textrm{($(Y, y)$ pointed algebraic chain)}
    \end{align*}
    which completes the proof.
\end{proof}

\begin{prop}{existencepointedprogiteration}
    For each $\alpha$ there exists a unique pointed progressive iteration of length $\alpha$.
\end{prop}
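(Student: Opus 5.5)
We mirror the proof of \refprop{existenceofuniquesimpleprogit}, proceeding by induction on $\alpha$. By \reflemm{decompositionofsup} it suffices to prove two closure properties. First, if a unique pointed progressive iteration over $\alpha$ exists, then one exists over $s\alpha$. Second, if $A$ is a downward closed set of ordinals and a unique pointed progressive iteration exists over each $\alpha \in A$, then one exists over $\bigcurlyvee A$.

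The limit step (2) goes through verbatim. We glue the iterations $(X^\alpha, x^\alpha)$, $\alpha \in A$, and independence of the choice of $\alpha$ follows from uniqueness over $\beta \in A$ together with \reflemm{restrictionofpointedchains}. The pointedness triangle involves only indices $\gamma \lt \beta$, which lie below a common $\alpha \in A$, so it is inherited. The colimit condition at each $\beta$ is likewise inherited.

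The successor step (1) needs more work. Given the pointed progressive iteration $(X',x')$ over $\alpha$, we extend it to $s\alpha$ as follows.
\begin{enumerate}
    \item[(a)] Define $(X_\alpha, x^\alpha_\beta)_{\beta \lt \alpha}$ as the chosen colimit of the diagram $D(\alpha) \to \ct{C}$ from \reflemm{coconesfromchains}; this lemma applies because $\alpha \leq \alpha$.
    \item[(b)] Define the connecting maps by $X^\alpha_\beta := x^\alpha_\beta \circ \eta_{X_\beta}$ for $\beta \lt \alpha$. This forces the pointedness triangle, in line with \refrema{functorialityfollows}.
    \item[(c)] The first triangle $x^\alpha_\gamma \circ FX^\gamma_\delta = x^\alpha_\delta$ is one half of the cocone condition (the $1$-morphisms of $D(\alpha)$).
    \item[(d)] For the second triangle, compute $X^\alpha_\gamma \circ x^\gamma_\delta = x^\alpha_\gamma \circ \eta_{X_\gamma}\circ x^\gamma_\delta = x^\alpha_\delta$. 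This is exactly the other half of the cocone condition (the $0$-morphisms).
\end{enumerate}
Functoriality of $X$ then follows from \refrema{functorialityfollows}. Every colimit condition at $\beta \lt s\alpha$ holds, either from $X'$ or by construction at $\alpha$, so we have a pointed progressive iteration.

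For uniqueness, any pointed progressive iteration over $s\alpha$ restricts to one over $\alpha$ by \reflemm{restrictionofpointedchains}, so the restriction equals $(X',x')$. Its component at $\alpha$ must then be the chosen colimit of the same diagram, and its connecting maps are forced by the pointedness triangle. Hence the extension is unique.

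The main point requiring care is that in this pointed setting the connecting maps into $X_\alpha$ are given by an explicit formula rather than induced from the colimit property of $X_\beta$ as in the unpointed case. We must therefore check that both algebraic-chain triangles hold. The obstacle dissolves once we observe that the two kinds of arrows in $D(\alpha)$ were designed precisely so that the cocone conditions coincide with these two triangles.
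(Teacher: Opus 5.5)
Your proposal is correct and follows the paper's proof. The paper likewise copies the $\bigcurlyvee$-step from \refprop{existenceofuniquesimpleprogit} and, in the successor step, takes $X_\alpha$ to be the chosen colimit of the diagram on $D(\alpha)$, forces the connecting maps to be $x^\alpha_\beta \circ \eta_{X_\beta}$, and reads off the remaining two triangles from the cocone conditions for the two kinds of arrows of $D(\alpha)$. You spell out points the paper leaves implicit: functoriality via the remark on pointed chains, and the uniqueness argument, which also gives agreement of the glued components at $\beta$ itself, not just below $\beta$.
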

\begin{proof}
    We modify the proof of \refprop{existenceofuniquesimpleprogit}. We still prove the claim by induction on $\alpha$, but we will only discuss the successor step, as the proof of the other step can be copied verbatim. So suppose that $\alpha = s\beta$ and we have a unique pointed progressive iteration $(X, x)$ of length $\beta$. Then we have a diagram on $D(\beta)$, as in \reflemm{coconesfromchains}, and we have to define $(X_\beta, x^\beta_\gamma)$ as the chosen colimit of this diagram if we wish to extend the pointed progressive iteration to one of length $\alpha$. In addition, we are forced to define $X^\beta_\gamma := x^\beta_\gamma \circ \eta_{X_\gamma}$, which, together with the fact that $(X_\beta, x_\gamma^\beta)$ is a cocone on $D(\beta)$, gives us the commutativity of the three triangles in (\ref{pointedchaindef}).
\end{proof}

\begin{theo}{convergence} Let $(F, \eta)$ be a pointed endofunctor on a cocomplete category \ct{C}. If $\lambda$ is a limit ordinal and $F$ preserves colimits of shape $\Downarrow \lambda$, then the initial pointed $F$-algebra exists.
\end{theo}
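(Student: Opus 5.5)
We mimic the proof of \reftheo{adamekagain}. Let $(X,x)$ be the unique pointed progressive iteration of length $\lambda$ from \refprop{existencepointedprogiteration}. By \refrema{functorialityfollows}, $X$ is a functor $\Downarrow\lambda \to \ct{C}$. Put $I = {\rm colim}\, X$ with colimiting cocone $i_\alpha : X_\alpha \to I$.

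\textbf{Constructing the structure map.} Since $F$ preserves colimits of shape $\Downarrow\lambda$, $FI$ is the colimit of the $FX_\beta$ with cocone $Fi_\beta$. We define $i: FI \to I$ by $h_\beta := i_\alpha \circ x^\alpha_\beta$, where $\beta \lt \alpha \lt \lambda$. Independence of the choice of $\alpha$, and compatibility with $FX^\beta_\gamma$, follow exactly as before from $\lambda$ being a limit ordinal and the first two triangles in (\ref{pointedchaindef}). This yields the square (\ref{descommsq}).

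\textbf{Checking the unit law.} We must show $i \circ \eta_I = 1_I$. It suffices to check this after precomposing with each $i_\beta$. By naturality of $\eta$,
\[ i \circ \eta_I \circ i_\beta = i \circ Fi_\beta \circ \eta_{X_\beta} = i_\alpha \circ x^\alpha_\beta \circ \eta_{X_\beta} = i_\alpha \circ X^\alpha_\beta = i_\beta, \]
where the third equality is the third triangle of (\ref{pointedchaindef}), for any $\alpha$ with $\beta \lt \alpha \lt \lambda$. So $(I,i)$ is a pointed algebra.

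\textbf{Initiality.} Let $(Y,y)$ be a pointed $F$-algebra. The constant chain $Y_\alpha = Y$, $Y^\alpha_\beta = 1_Y$, $y^\alpha_\beta = y$ is a pointed algebraic chain: the third triangle reduces to $y\circ\eta_Y = 1_Y$, which is precisely the pointedness of $(Y,y)$. By \refprop{fromcolimitstoinitial} there is a unique morphism of chains $(\varphi_\alpha)$ from $(X,x)$ to this chain. Its naturality (automatic by \refrema{functorialityfollows}) gives $\varphi: I \to Y$ with $\varphi\circ i_\alpha = \varphi_\alpha$. The same calculation as in \reftheo{adamekagain} shows that $\varphi$ is an algebra map. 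Conversely, any algebra map $\psi$ yields a chain morphism $(\psi\circ i_\alpha)$, which must equal $(\varphi_\alpha)$ by uniqueness; since the $i_\alpha$ are jointly epic, $\psi = \varphi$. Because pointed algebras form a full subcategory of $F$-algebras, this establishes initiality.

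The only genuinely new step is the unit law. It hinges on the third triangle and on the existence of some $\alpha$ above $\beta$, which the limit-ordinal hypothesis supplies. Everything else transfers verbatim from the unpointed case.
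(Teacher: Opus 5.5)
Your proposal is correct and follows the paper's proof essentially step for step. You build $i$ exactly as in the unpointed case, obtain $i\circ\eta_I = 1_I$ by precomposing with the colimit injections and using the third triangle of (\ref{pointedchaindef}), and then rerun the initiality argument. Your explicit check that the constant chain on a pointed algebra is a pointed chain, precisely because $y\circ\eta_Y = 1_Y$, is needed to invoke \refprop{fromcolimitstoinitial}; the paper leaves this implicit under ``essentially as in \reftheo{adamekagain}''.
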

\begin{proof}
    We have to make one small addition to the proof of \reftheo{adamekagain}. So let $(X, x)$ be the unique pointed progressive iteration of length $\lambda$ and $(I, i_\gamma)$ be the colimit of $X$. As before, we can construct a unique map $i: FI \to I$ such that for any $\gamma \lt \beta \lt \alpha$ the diagram
    \begin{displaymath}
        \begin{tikzcd}
            FX_\gamma \ar[r, "Fi_\gamma"] \ar[d, "x^\beta_\gamma"] & FI \ar[d, "i"] \\
            X_\beta \ar[r, "i_\beta"] & I
        \end{tikzcd}
    \end{displaymath}
    commutes. Once we show that $i$ is a pointed algebra, we can prove that $(I, i)$ is initial pointed algebra, essentially as in \reftheo{adamekagain}.
    
    In order to verify $i \circ \eta_I = 1_Y$, it suffices to check that $i \circ \eta_I \circ i_\gamma = i_\gamma$ for any $\gamma \lt \lambda$. But we have
    \begin{align*}
        i \circ \eta_I \circ i_\gamma & =  i \circ Fi_\gamma \circ \eta_{X_\gamma}  & \textrm{($\eta$ natural)}\\
        & =  i_\beta \circ x^\beta_\gamma \circ \eta_{X_\gamma} & \textrm{(diagram above)}\\
        & =  i_\beta \circ X^\beta_\gamma & \textrm{($(X, x)$ pointed)}\\
        & =  i_\gamma & \textrm{($(I, i)$ cocone)},
    \end{align*}
    where $\beta$ is any ordinal such that $\gamma \lt \beta \lt \lambda$. This completes the proof.
\end{proof}

\begin{coro}{monadicitypointedFalg}
    Let $(F, \eta)$ be a pointed endofunctor on a cocomplete category \ct{C}. If $\lambda$ is a limit ordinal and $F: \ct{C} \to \ct{C}$ is an endofunctor preserving colimits of shape $\Downarrow \lambda$, then the forgetful functor
    \[ U: (F, \eta)\text{\rm -Alg} \to \ct{C} \]
    is monadic.
\end{coro}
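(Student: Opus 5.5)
We follow the proof of \refcoro{monadicityFalg}. The first step is to produce a left adjoint to $U$. For this we show that for each object $A$ of \ct{C} the comma category $A \downarrow U$ has an initial object.

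We identify $A \downarrow U$ with the category of algebras for a suitable pointed endofunctor, namely $G X := A + FX$ with pointing $\inr \circ \eta_X: X \to A + FX$. An object of $A\downarrow U$ is a pointed $F$-algebra $(X, x)$ with a map $a: A \to X$. Such a pair corresponds to the map $[a, x]: A + FX \to X$, and the unit law $x \circ \eta_X = 1_X$ translates exactly into $[a,x] \circ \inr \circ \eta_X = 1_X$. Conversely, a pointed $G$-algebra $g: A + FX \to X$ decomposes as $[a, x]$ with $x \circ \eta_X = 1_X$. Morphisms match up as well: a map commuting with the $[a,x]$ is precisely a map of $F$-algebras under $A$. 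This gives an isomorphism of categories $A \downarrow U \cong (G, \inr\circ\eta)\text{-Alg}$.

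Next we check that $G$ preserves colimits of shape $\Downarrow\lambda$. Since $\lambda$ is a limit ordinal, the poset $\Downarrow\lambda$ is directed and constructively inhabited. Indeed, to apply the theorem at all one needs some $\alpha \lt \lambda$; if $\Downarrow\lambda$ were empty the hypothesis would be vacuous. So $\Downarrow \lambda$ is connected, and colimits of connected shape commute with $A + (-)$, because the constant functor $A$ has colimit $A$ over a connected diagram. Here I expect the mild subtlety: constructively we cannot decide whether $\lambda$ has an ancestor. I would handle it by reading ``limit ordinal'' as entailing inhabitedness, or else by noting that preservation of the empty colimit by $F$ and $A+(-)$ would have to be argued separately. \reftheo{convergence} then yields an initial pointed $G$-algebra, hence a left adjoint to $U$.

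For Beck's theorem \cite[Theorem 4.4.4]{borceux94ii} it remains to check that $U$ is conservative and creates coequalizers of $U$-split pairs. Pointed $F$-algebras form a full subcategory of $F$-algebras closed under the relevant constructions, so I would reduce to the verification referred to in \reftheo{checkformonadicity}. Conservativity is immediate: an algebra map whose underlying map is invertible has an inverse that is again an algebra map. For a $U$-split pair with split coequalizer $e: X \to Q$, the algebra structure on $Q$ is induced because $Fe$ is again a split (hence absolute) coequalizer. The unit law on $Q$ then follows by precomposing with the epimorphism $e$ and using naturality of $\eta$: $q \circ \eta_Q \circ e = q \circ Fe \circ \eta_X = e \circ x \circ \eta_X = e$. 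Uniqueness and the colimit property are transferred as in the unpointed case.
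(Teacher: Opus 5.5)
Your argument is the paper's own: identify $A \downarrow U$ with the pointed algebras for $X \mapsto A + FX$ with pointing $\inr \circ \eta_X$, apply \reftheo{convergence}, and check Beck's conditions as in \reftheo{checkformonadicity}. Your structure map on $Q$, obtained by factoring through the absolute coequalizer $Fe$, is exactly the paper's explicit $h \circ y \circ Fs$.

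The inhabitedness issue you raise is genuine. The paper's one-line assertion that $A + F(-)$ still preserves colimits of shape $\Downarrow\lambda$ passes over it. However, your justification and your second remedy are wrong.

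\begin{itemize}
\item Under the paper's definition, $0 = \sup \emptyset$ is vacuously a limit ordinal. For it the hypothesis is not vacuous: it says that $F$ preserves the initial object. \reftheo{convergence} does apply in that case, with $I = 0$.
\item What breaks is that $A + F(-)$ sends the initial object to $A$. So it preserves the empty colimit only when $A$ is initial.
\item No separate argument can rescue this case, because the corollary itself is false there. On \textbf{Sets}, the inhabited-subsets functor pointed by singletons preserves $\emptyset$. Yet for every ordinal $\kappa$, the set $\kappa + 1$, with each non-singleton $S$ sent to $\min(\kappa, \sup_{\alpha \in S}(\alpha+1))$, is a pointed algebra generated by $\{0,1\}$. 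A free pointed algebra on two generators would have to surject onto all of these, so none exists.
\end{itemize}

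So you must commit to your first remedy and build inhabitedness of $\Downarrow\lambda$ into the notion of limit ordinal. Then $\Downarrow\lambda$ is inhabited and directed, hence connected, and $A + (-)$ preserves colimits of that shape. With that, the rest of your proof goes through as written.
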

\begin{proof}
    To show that $U$ has a left adjoint, we need to verify that for each object $A$ in \ct{C}, the category $A \downarrow U$ has an initial object. This category is equivalent to the category of algebras for the endofunctor  endofunctor $X \mapsto A + FX$ with pointing $\inr \circ \eta_X$. Since this functor still preserves colimits of shape $\Downarrow \lambda$, the existence of the initial object follows from the previous result. The remaining verifications needed for applying Beck's monadicity theorem can be found in the proof of \reftheo{checkformonadicity}.
\end{proof}

\section{Initial algebras for special endofunctors}

In this section we further modify the argument from the previous section to prove the existence of initial special algebras. The notion of a special algebra was introduced in \cite{bergetal25} in order to give a self-contained conceptual proof of the double-categorical small object argument. Our only reason for studying special algebras is that we will use them for our constructive small object argument in the next section; we are not aware of any other applications.

In this section we assume that we are given endofunctors $F$ and $G$ on a cocomplete category \ct{C} together with natural transformations $\eta: 1_\ct{C} \Rightarrow F$, $\sigma: G \Rightarrow FF$ and $\tau: G \Rightarrow F$.

\begin{defi}{specialalgebra}
    An algebra $(A, a)$ for the pointed endofunctor $(F,\eta)$ will be called \emph{special} if the following diagram commutes:
    \begin{displaymath}
        \begin{tikzcd}
            G A \ar[r, "\sigma_A"] \ar[d, "\tau_A"] & FFA \ar[r, "Fa"] & FA \ar[d, "a"] \\
            FA \ar[rr, "a"] & & A.
        \end{tikzcd}
    \end{displaymath}
    The category of special algebraic chains is a full subcategory of the category of algebraic chains on the endofunctor $F$.
\end{defi}    

We will construct the initial special algebra again as a colimit of a suitable chain.

\begin{defi}{specialchain}
    An algebraic chain $(X,x)$ of length $\alpha$ for the pointed endofunctor $(F,\eta)$ is \emph{special} if for any $\delta \lt \gamma \lt \beta \lt \alpha$, the diagram
        \begin{displaymath}
            \begin{tikzcd}
                G X_\delta \ar[r, "\sigma_{X_\delta}"] \ar[d, "\tau_{X_\delta}"] & FFX_\delta \ar[r, "Fx^\gamma_\delta"] & FX_\gamma\ar[d, "x_\gamma^\beta"] \\
            FX_\delta \ar[rr, "x_\delta^\beta"] & & X_\beta
            \end{tikzcd}
        \end{displaymath}
        commutes.
\end{defi}

Since for a pointed algebraic chain $x_\delta^\beta = x^\beta_\gamma \circ \eta_{X_\gamma} \circ x_\delta^\gamma$, the additional condition for a special algebraic chain is equivalent to saying that 
\begin{equation} \label{fork}
    \begin{tikzcd}
        G X_\delta \ar[shift left, rr, "Fx^\gamma_\delta \circ \sigma_{X_\delta}"] \ar[rr, shift right, "\eta_{X_\gamma} \circ x^\gamma_\delta \circ \tau_{X_\delta}"'] & & FX_\gamma \ar[r, "x^\beta_\gamma"] & X_\beta
    \end{tikzcd}
\end{equation}
is a fork. If $(X, x)$ is a special chain, we will write $x_{\delta,\gamma}^\beta$ for both composites in this fork. Note that if $\varphi: (X, x) \to (Y, y)$ is a morphism of special chains, then
\[ \varphi_\beta \circ x_{\delta, \gamma}^\beta = y_{\delta, \gamma}^\beta \circ G(\varphi_\delta). \]

\begin{defi}{categoryS} Let $S(\beta)$ be the category which has as objects ordinals $\gamma \lt \beta$ and pairs of ordinals $(\delta, \gamma)$ with $\delta \lt \gamma \lt \beta$. Besides identity arrows, this category has two maps $0, 1: \psi \to \gamma$ where either (i) $\psi = \delta$ and $\delta \lt \gamma$ or (ii) $\psi = (\delta, \delta')$ and $\delta \lt \delta' \leq \gamma$, and no other morphisms. Composition is computed by taking the minimum.
\end{defi} 

\begin{lemm}{coconesfromspecialchains}
Let $(X,x)$ be a special algebraic chain of length $\alpha$. If $\beta \leq \alpha$, then there is a diagram $S(\beta) \to \ct{C}$ obtained by sending $\gamma$ to $FX_\gamma$ and $(\delta, \delta')$ to $G X_\delta$; in addition, we send
\begin{itemize}
    \item $0: \delta \to \gamma$ to $\eta_{X_\gamma} \circ x_\delta^\gamma$;
    \item $1: \delta \to \gamma$ to $FX_\delta^\gamma$;  
    \item $0: (\delta, \delta') \to \gamma$ to $\eta_{X_\gamma} \circ x^\gamma_\delta \circ \tau_{X_\delta}$;
    \item $1: (\delta, \delta') \to \gamma$ to $Fx_\delta^\gamma \circ \sigma_{X_\delta}$;
\end{itemize} 
If $\beta \lt \alpha$, then we have a cocone on this diagram with vertex $X_\beta$ and maps $x^\beta_\gamma: FX_\gamma \to X_\beta$ and $x_{\delta, \delta'}^\beta: GX_\delta \to X_\beta$.
\end{lemm}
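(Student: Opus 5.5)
The plan is to check the two claims directly from the definitions. Pointedness and specialness supply all the needed equations, and the part of the diagram on ordinal objects has already been handled by \reflemm{coconesfromchains}. First I would record a simplification. Since $(X,x)$ is pointed, $x^\beta_{\delta'}\circ\eta_{X_{\delta'}}\circ x^{\delta'}_\delta = X^\beta_{\delta'}\circ x^{\delta'}_\delta = x^\beta_\delta$. Hence the fork composite in (\ref{fork}) is
\[ x^\beta_{\delta,\delta'} = x^\beta_\delta \circ \tau_{X_\delta}, \]
which does not depend on $\delta'$. The special condition, read at $\delta \lt \gamma \lt \beta$, says exactly that $x^\beta_\gamma\circ Fx^\gamma_\delta\circ\sigma_{X_\delta} = x^\beta_\delta\circ\tau_{X_\delta}$. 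Because of this independence, the disjunctive relation $\delta'\leq\gamma$ in the definition of $S(\beta)$ never forces a constructive case split.

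For functoriality, every non-identity morphism of $S(\beta)$ ends at an ordinal object, so the only non-trivial composites have the form $\psi \xrightarrow{i} \gamma' \xrightarrow{j} \gamma$ with $\gamma' \lt \gamma$. Their composite is the arrow labelled $\min(i,j)$. When $\psi$ is an ordinal $\varepsilon$, this is \reflemm{coconesfromchains}. When $\psi = (\varepsilon,\varepsilon')$ with $\varepsilon \lt \varepsilon' \leq \gamma'$, I would go through the four labellings.
\begin{itemize}
\item $(i,j)=(1,1)$: apply $F$ to $X^\gamma_{\gamma'}\circ x^{\gamma'}_\varepsilon = x^\gamma_\varepsilon$.
\item $(0,1)$: use naturality of $\eta$ and the same chain identity.
\item $(0,0)$: use pointedness, $x^\gamma_{\gamma'}\circ\eta_{X_{\gamma'}} = X^\gamma_{\gamma'}$.
\item $(1,0)$: here one must show
\[ \eta_{X_\gamma}\circ x^\gamma_{\gamma'}\circ Fx^{\gamma'}_\varepsilon\circ\sigma_{X_\varepsilon} = \eta_{X_\gamma}\circ x^\gamma_\varepsilon\circ\tau_{X_\varepsilon}. \]
This is precisely the special condition applied to the triple $\varepsilon \lt \gamma' \lt \gamma$, which is legitimate because $\gamma \lt \beta \leq \alpha$.
\end{itemize}
This last case is the only place where specialness enters functoriality, and I expect it to be the main point to get right. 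The danger is that one does not notice that "minimum" sends the mixed composite to $0$, so that the special equation is exactly what is required.

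For the cocone with $\beta \lt \alpha$, the legs at ordinal objects were treated in \reflemm{coconesfromchains}. At $(\delta,\delta')$, with arrows into $\gamma$ where $\delta \lt \delta'\leq\gamma \lt \beta$, I need $x^\beta_\gamma$ composed with either arrow to equal $x^\beta_{\delta,\delta'} = x^\beta_\delta\circ\tau_{X_\delta}$. For arrow $0$, pointedness gives $x^\beta_\gamma\circ\eta_{X_\gamma}\circ x^\gamma_\delta\circ\tau_{X_\delta} = X^\beta_\gamma\circ x^\gamma_\delta\circ\tau_{X_\delta} = x^\beta_\delta\circ\tau_{X_\delta}$. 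For arrow $1$, the special condition at $\delta \lt \gamma \lt \beta$ gives the same value. Both identities hold whether $\delta' = \gamma$ or $\delta' \lt \gamma$, which completes the verification.
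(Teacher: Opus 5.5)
Your proof is correct, and it is the direct verification the paper intends; the paper states this lemma without proof, as the evident analogue of \reflemm{coconesfromchains}. You reduce the ordinal-object part to that lemma and use pointedness to get $x^\beta_{\delta,\delta'}=x^\beta_\delta\circ\tau_{X_\delta}$. You also correctly isolate the only two places where specialness is needed: the mixed composite $(1,0)$ out of a pair object, where $\gamma<\beta\leq\alpha$ is used, and the cocone leg for the arrow $1\colon(\delta,\delta')\to\gamma$.
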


\begin{defi}{specialcanonicalchain}
A special algebraic chain $(X, x)$ of length $\alpha$ will be called a \emph{special  progressive iteration} if for each $\beta \lt \alpha$ the cocone from \reflemm{coconesfromspecialchains} is the chosen colimit.
\end{defi}

\begin{prop}{specialfromcolimitstoinitial}
    If $(X,x)$ is a special progressive iteration of length $\alpha$, then $(X, x)$ is initial in the category of special algebraic chains.
\end{prop}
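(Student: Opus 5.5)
We follow the proofs of \refprop{simpleprogitinitial} and \refprop{fromcolimitstoinitial}. Let $(X,x)$ be a special progressive iteration of length $\alpha$ and $(Y,y)$ any special algebraic chain of length $\alpha$. By recursion on $\beta \lt \alpha$ we construct maps $\varphi_\beta: X_\beta \to Y_\beta$. The inductive hypothesis is that $\varphi_\gamma$ has been defined for all $\gamma \lt \beta$, and that the square $\varphi_\gamma \circ x^\gamma_\delta = y^\gamma_\delta \circ F\varphi_\delta$ commutes for $\delta \lt \gamma \lt \beta$.

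Since $X_\beta$ is the chosen colimit of the $S(\beta)$-diagram of \reflemm{coconesfromspecialchains}, it suffices to exhibit a cocone on this diagram with vertex $Y_\beta$. Its components are
\[ y^\beta_\gamma \circ F\varphi_\gamma : FX_\gamma \to Y_\beta \quad \text{and} \quad y^\beta_{\delta,\delta'} \circ G\varphi_\delta : GX_\delta \to Y_\beta. \]
We can then take $\varphi_\beta$ to be the unique induced map. In particular $\varphi_\beta \circ x^\beta_\gamma = y^\beta_\gamma \circ F\varphi_\gamma$, which is the required square at stage $\beta$. Uniqueness of the morphism of chains holds for the same reason: any morphism must satisfy these squares and hence agree with $\varphi_\beta$ on the colimiting cocone. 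Naturality of $\varphi$ is automatic by \refrema{functorialityfollows}, since special chains are in particular pointed.

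The cocone conditions come in four kinds. For the arrows $0,1:\delta \to \gamma$ the computations are exactly those in the proof of \refprop{fromcolimitstoinitial}. For the arrows out of $(\delta,\delta')$, with $\delta \lt \delta' \leq \gamma$, there are two checks. For the arrow $0$ we must show
\[ y^\beta_\gamma \circ F\varphi_\gamma \circ \eta_{X_\gamma} \circ x^\gamma_\delta \circ \tau_{X_\delta} = y^\beta_{\delta,\delta'} \circ G\varphi_\delta. \]
We move $\varphi_\gamma$ past $\eta$ by naturality of $\eta$, then past $x^\gamma_\delta$ by the induction hypothesis, and then past $\tau$ by naturality of $\tau$. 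This gives $y^\beta_\gamma \circ \eta_{Y_\gamma}\circ y^\gamma_\delta \circ \tau_{Y_\delta} \circ G\varphi_\delta$, which equals $y^\beta_\delta \circ \tau_{Y_\delta} \circ G\varphi_\delta$ because $Y$ is pointed. For the arrow $1$ we use naturality of $\sigma$ and $F$ applied to the induction hypothesis. This gives $y^\beta_\gamma \circ Fy^\gamma_\delta \circ \sigma_{Y_\delta}\circ G\varphi_\delta$.

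The main obstacle is that when $\delta' \lt \gamma$ (rather than $\delta' = \gamma$), these expressions must be identified with the fork composite $y^\beta_{\delta,\delta'}$, which was defined using the index $\delta'$ and not $\gamma$. So we need that $y^\beta_{\delta,\gamma'}$ is independent of the middle index $\gamma'$. For the $\tau$-leg this is immediate: pointedness gives $y^\beta_{\gamma'}\circ\eta\circ y^{\gamma'}_\delta = y^\beta_\delta$, and the right-hand side does not mention $\gamma'$. The speciality condition of \refdefi{specialchain} says exactly that the $\sigma$-leg $y^\beta_{\gamma'} \circ Fy^{\gamma'}_\delta\circ\sigma$ equals the $\tau$-leg $y^\beta_\delta\circ\tau$, for every $\gamma'$ with $\delta \lt \gamma' \lt \beta$. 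Hence both legs with index $\gamma$ and with index $\delta'$ reduce to $y^\beta_\delta \circ \tau_{Y_\delta}$, and the required equations follow. No limit-ordinal or trichotomy reasoning is needed, only the induction principle of \reflemm{propoflt}(2), so the argument stays constructive.
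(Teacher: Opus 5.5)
Your proposal is correct and follows the paper's proof: recursion on $\beta$, the same cocone on the $S(\beta)$-diagram, and the same four cocone equations, with the two equations for the arrows $0,1:\delta \to \gamma$ taken over from the pointed case. The only difference is how you handle the middle index $\delta'$. You collapse both fork legs to $y^\beta_\delta \circ \tau_{Y_\delta}$ using pointedness and speciality of $(Y,y)$, whereas the paper moves from $\delta'$ to $\gamma$ via the chain identities $y^\beta_{\delta'} = y^\beta_\gamma \circ FY^\gamma_{\delta'}$ and $Y^\gamma_{\delta'} \circ y^{\delta'}_\delta = y^\gamma_\delta$ together with naturality of $\varphi$; this is a cosmetic variation.
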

\begin{proof}
    Suppose $(X,x)$ is a special progressive iteration, and $(Y, y)$ is any special chain. 
    
    We construct maps $\varphi_\beta: X_\beta \to Y_\beta$ by induction on $\beta$, using that $X_\beta$ is a colimit and that we have already constructed maps $\varphi_\gamma: X_\gamma \to Y_\gamma$ for all $\gamma \lt \beta$ making 
    \begin{equation} \label{earlier}
        \begin{tikzcd}
            FX_\delta \ar[d, "x^\gamma_\delta"] \ar[r, "F\varphi_\delta"] & FY_\delta \ar[d, "y^\gamma_\delta"] \\
            X_\gamma \ar[r, "\varphi_\gamma"] & Y_\gamma
        \end{tikzcd} 
    \end{equation}
    commute whenever $\delta \lt \gamma$. To construct the unique map $\varphi_\beta$ making
    \begin{displaymath}
        \begin{tikzcd}
            FX_\gamma \ar[r, "F\varphi_\gamma"] \ar[d, "x_{\gamma}^{\beta}"] & FY_\gamma \ar[d, "y_{\gamma}^{\beta}"] \\
            X_\beta \ar[r, dotted, "\varphi_\beta"] & Y_\beta
        \end{tikzcd}
    \end{displaymath}
    commute for all $\gamma \lt \beta$ it suffices to prove that \[ (Y_\beta, (y^\beta_\gamma \circ F\varphi_\gamma)_\gamma, (y^\beta_{\delta, \delta'} \circ G\varphi_\delta)_{\delta, \delta'}) \] is a cocone on the diagram $S(\beta) \to \ct{C}$ that we constructed above. This means that it remains to verify four equalities:
    \begin{eqnarray*} 
        y^\beta_\gamma \circ F\varphi_\gamma \circ FX^\gamma_\delta & = & y_\delta^\gamma \circ F\varphi_\delta \\
        y^\beta_\gamma \circ F\varphi_\gamma \circ \eta_{X_\gamma} \circ x^\gamma_\delta & = & y_\delta^\gamma \circ F\varphi_\delta \\
        y^\beta_\gamma \circ F\varphi_\gamma \circ \eta_{X_\gamma} \circ x^\gamma_\delta \circ \tau_{X_\delta} & = & y_{\delta, \delta'}^\beta \circ G\varphi_\delta \\
        y^\beta_\gamma \circ F\varphi_\gamma \circ Fx_\delta^\gamma \circ \sigma_{X_\delta} & = & y_{\delta, \delta'}^\beta \circ G\varphi_\delta
    \end{eqnarray*}
    where $\delta' \leq \gamma$. The first two can be shown as in \refprop{fromcolimitstoinitial}, so we will prove the latter two. For the first we calculate
    \begin{align*}
        y_{\delta, \delta'}^\beta \circ G \varphi_\delta & = y^\beta_{\delta'} \circ \eta_{Y_{\delta'}} \circ y_\delta^{\delta'} \circ \tau_{Y_\delta} \circ G \varphi_\delta \\
        & = y_\gamma^\beta \circ FY_{\delta'}^\gamma \circ \eta_{Y_{\delta'}} \circ y_\delta^{\delta'} \circ F\varphi_\delta \circ \tau_{X_\delta} & \mbox{($(Y, y)$ chain and $\tau$ natural)}\\
        & = y_\gamma^\beta \circ \eta_{Y_\gamma} \circ Y^\gamma_{\delta'} \circ \varphi_{\delta'} \circ x_\delta^{\delta'} \circ \tau_{X_\delta} & \mbox{($\eta$ natural and (\ref{earlier}))} \\
        & = y_\gamma^\beta \circ \eta_{Y_\gamma} \circ \varphi_\gamma \circ X^\gamma_{\delta'} \circ x_\delta^{\delta'} \circ \tau_{X_\delta} & (\ref{earlier})\\
        & = y^\beta_\gamma \circ F\varphi_\gamma \circ \eta_{X_\gamma} \circ x^\gamma_\delta \circ \tau_{X_\delta} & \mbox{($\eta$ natural and $(X, x)$ chain)}
    \end{align*}
    while the second is shown as follows:
    \begin{align*}    
        y_{\delta, \delta'}^\beta \circ G\varphi_\delta & = y_{\delta'}^\beta \circ Fy_\delta^{\delta'} \circ \sigma_{Y_\delta} \circ G\varphi_\delta \\
        & = y_\gamma^\beta \circ FY_{\delta'}^\gamma \circ Fy_\delta^{\delta'} \circ FF\varphi_\delta \circ \sigma_{X_\delta} & \mbox{($(Y, y)$ chain and $\sigma$ natural)} \\
        & = y_\gamma^\beta \circ Fy_\delta^{\gamma} \circ FF\varphi_\delta \circ \sigma_{X_\delta} & \mbox{($(Y, y)$ chain)} \\
        & = y^\beta_\gamma \circ F\varphi_\gamma \circ Fx_\delta^\gamma \circ \sigma_{X_\delta} & (\ref{earlier})
    \end{align*}
    This completes the proof.
\end{proof}

The proof of the following proposition should now be routine.

\begin{prop}{specialexistence}
    For each $\alpha$ there exists a unique special progressive iteration of length $\alpha$.
\end{prop}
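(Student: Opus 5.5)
We follow the proof of \refprop{existenceofuniquesimpleprogit} and \refprop{existencepointedprogiteration}, arguing by induction on $\alpha$. Using \reflemm{decompositionofsup}, we split the argument into a successor step ($\alpha = s\beta$) and a directed-union step ($\alpha = \bigcurlyvee A$ with $A$ downward closed). Before starting, we note the analogue of \reflemm{restrictionofpointedchains}: the special condition and the colimit condition only refer to ordinals below a given one. Hence restricting a special progressive iteration to $\beta \leq \alpha$ yields a special progressive iteration of length $\beta$.

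The union step can then be copied verbatim. For $\alpha, \alpha' \in A$ and $\beta \lt \alpha, \alpha'$, uniqueness at $\beta$ forces the restrictions of $X^\alpha$ and $X^{\alpha'}$ to $\beta$ to agree. The data at $\beta$ itself are the chosen colimit of the diagram $S(\beta) \to \ct{C}$ from \reflemm{coconesfromspecialchains}. That diagram depends only on this common restriction, so these data agree as well. The glued chain is therefore well defined. It is special and progressive because every instance of these conditions involves finitely many ordinals $\delta \lt \gamma \lt \beta$, all lying below a single $\alpha \in A$; this holds since $\beta \lt \bigcurlyvee A$ means $\beta \lt \alpha$ for some $\alpha \in A$, and downward closure of $\Downarrow \alpha$ takes care of $\delta, \gamma$.

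For the successor step, let $(X, x)$ be the unique special progressive iteration of length $\beta$. We are forced to define $(X_\beta, x^\beta_\gamma, x^\beta_{\delta,\delta'})$ as the chosen colimit of the diagram $S(\beta) \to \ct{C}$ from \reflemm{coconesfromspecialchains}, and as in the pointed case we are forced to set $X^\beta_\gamma := x^\beta_\gamma \circ \eta_{X_\gamma}$. This gives existence and uniqueness at once, provided the extended chain is a special algebraic chain. The cocone equations for the arrows $0,1\colon \delta \to \gamma$ give the three triangles of (\ref{pointedchaindef}) with top index $\beta$, exactly as in \refprop{existencepointedprogiteration}; functoriality then follows by \refrema{functorialityfollows}. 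The cocone equations for the two arrows $(\delta,\gamma) \to \gamma$ state that (\ref{fork}) is a fork with top index $\beta$. By the remark preceding (\ref{fork}), this is equivalent to the special condition for $\delta \lt \gamma \lt \beta$.

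The main point requiring care is to check that the leg $x^\beta_{\delta,\delta'}$ of the colimit agrees with the common composite in (\ref{fork}), so that the notation is consistent. This follows from the cocone condition along the arrows $(\delta,\delta') \to \delta'$. Compatibility of these legs across $\gamma$ with $\delta' \leq \gamma$ is already built into the cocone, so it imposes no extra constraint on the definition. Conditions whose top index is strictly below $\beta$ hold by the induction hypothesis, which completes the successor step.
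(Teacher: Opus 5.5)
Your proposal is correct and is exactly the adaptation the paper intends; the paper itself only calls the proof routine, pointing back to the simple and pointed cases. As there, you argue by induction through the successor/union decomposition (the union step needs only downward closure of $A$, not directedness), define the new stage as the chosen colimit of the diagram on $S(\beta)$, force $X^\beta_\gamma := x^\beta_\gamma \circ \eta_{X_\gamma}$, and copy the union step verbatim. Your extra check, that the colimit leg at $(\delta,\delta')$ equals the fork composite via the arrow $(\delta,\delta') \to \delta'$, is the one new detail needed to identify the colimit cocone with the cocone of the special-chain lemma.
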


\begin{theo}{specialconvergence}
    Suppose $\lambda$ is a limit ordinal and both $F$ and $G$ preserve colimits of shape $\Downarrow \lambda$. If $(X,x)$ is the unique special progressive iteration of length $\lambda$, then $I = {\rm colim} \, X$ can be equipped with structure of a pointed $F$-algebra which will make it initial in the category of special $F$-algebras.
\end{theo}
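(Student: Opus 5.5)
We follow the template of \reftheo{adamekagain} and \reftheo{convergence}, adding one verification for speciality and one observation that makes initiality go through. Let $(X,x)$ be the special progressive iteration of length $\lambda$ (\refprop{specialexistence}) and let $(I, i_\gamma)$ be the colimit of $X$. A special chain is in particular an algebraic chain, so the argument of \reftheo{adamekagain} applies verbatim. Since $F$ preserves colimits of shape $\Downarrow\lambda$ and $\lambda$ is a limit ordinal, it gives a unique $i: FI \to I$ with $i \circ Fi_\gamma = i_\beta \circ x^\beta_\gamma$ for all $\gamma \lt \beta \lt \lambda$. The computation in the proof of \reftheo{convergence} then shows $i \circ \eta_I = 1_I$, because the chain is pointed.

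\textbf{Step 1: $(I,i)$ is special.} Since $G$ preserves colimits of shape $\Downarrow\lambda$, the maps $Gi_\delta$ for $\delta \lt \lambda$ are jointly epic. It therefore suffices to check $i \circ Fi \circ \sigma_I \circ Gi_\delta = i \circ \tau_I \circ Gi_\delta$ for every $\delta$. Using the limit property twice, choose $\delta \lt \gamma \lt \beta \lt \lambda$. For the left side, naturality of $\sigma$ gives $i\circ Fi\circ FFi_\delta\circ\sigma_{X_\delta}$. By the defining square of $i$ this equals $i \circ Fi_\gamma \circ Fx^\gamma_\delta \circ \sigma_{X_\delta} = i_\beta \circ x^\beta_\gamma\circ Fx^\gamma_\delta\circ\sigma_{X_\delta}$. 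For the right side, naturality of $\tau$ gives $i\circ Fi_\delta\circ\tau_{X_\delta} = i_\beta\circ x^\beta_\delta\circ\tau_{X_\delta}$. These agree by \refdefi{specialchain}.

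\textbf{Step 2: initiality.} Let $(Y,y)$ be a special algebra. Regard it as the constant chain of length $\lambda$, with identities as transition maps and $y$ as structure maps. This chain is pointed because $y\circ\eta_Y = 1$, and it is special because the condition of \refdefi{specialchain} reduces exactly to speciality of $(Y,y)$. By \refprop{specialfromcolimitstoinitial} there is a unique morphism of special chains $(X,x)\to(Y,y)$. Morphisms of special chains are just morphisms of algebraic chains, since the subcategory is full. From here the argument is copied from \reftheo{adamekagain}. The chain morphism induces $\varphi: I \to Y$, which is an $F$-algebra map by the same calculation. For uniqueness, any algebra map $\psi: I \to Y$ yields a chain morphism $(\psi\circ i_\alpha)_\alpha$, and joint monicity of the $i_\alpha$ gives $\psi=\varphi$.

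The only genuinely new point is Step 1. It needs $G$, and not only $F$, to preserve the colimit, and it needs to pick the intermediate ordinals $\gamma,\beta$ coherently. Well-definedness is not an issue, since both sides already reduce to independent expressions. Everything else is inherited from the earlier proofs.
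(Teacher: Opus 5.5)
Your proof is correct and follows the paper's own argument: Step 1 is the same chain of equalities the paper uses to show $(I,i)$ is special, and Step 2 spells out the ``almost identical'' initiality argument that the paper leaves to the reader (the constant chain on a special algebra $(Y,y)$ is a pointed special chain, so \refprop{specialfromcolimitstoinitial} applies). One terminological slip, which the paper also makes: the legs $i_\alpha$ of a colimiting cocone are jointly \emph{epic}, not jointly monic, and joint epicness is the property your uniqueness step actually uses.
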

\begin{proof}
    Since $I = {\rm colim} \, X$ we have maps $(i_\beta: X_\beta \to I)_{\beta \lt \alpha}$ such that $i_\gamma = i_\beta \circ X^\beta_\gamma$ for any $\gamma \lt \beta \lt \alpha$. As before, we can construct a unique map $i: FI \to I$ making the diagram
    \begin{equation} \label{luckyus}
        \begin{tikzcd}
            FX_\gamma \ar[r, "Fi_\gamma"] \ar[d, "x^\beta_\gamma"] & FI \ar[d, "i"] \\
            X_\beta \ar[r, "i_\beta"] & I
        \end{tikzcd}
    \end{equation}
    commute for any $\gamma \lt \beta \lt \alpha$, and as we have seen in the proof of \reftheo{convergence} this turns $I$ into an algebra for the pointed endofunctor $(F,\eta)$.

    Our next step is to show that the algebra $(I, i)$ is special, that is, that
    \[ i \circ Fi \circ \sigma_I = i \circ \tau_I \]
    holds. Using that $G$ preserves colimits, it suffices to show that these two maps become equal upon precomposition with $G i_\delta$ with $\delta \lt \alpha$. To see that that is the case, we use that $(X,x)$ is a special algebraic chain and we calculate:
    \begin{align*}
        i \circ Fi \circ \sigma_I \circ Gi_\delta & = i \circ Fi \circ FFi_\delta \circ \sigma_{X_\delta} & \mbox{($\sigma$ natural)}\\
        & = i\circ Fi_\gamma \circ Fx_\delta^\gamma \circ \sigma_{X_\delta} & \mbox{(\ref{luckyus})} \\
        & = i_\beta \circ x^\beta_\gamma \circ Fx_\delta^\gamma \circ \sigma_{X_\delta} & \mbox{(\ref{luckyus})} \\
        & = i_\beta \circ x^\beta_\delta \circ \tau_{X_\delta} & \mbox{($(X, x)$ special)}\\
        & = i \circ Fi_\delta \circ \tau_{X_\delta} & \mbox{(\ref{luckyus})} \\
        & = i \circ \tau_{I} \circ Gi_\delta & \mbox{($\tau$ natural)}
    \end{align*}
    In this proof we have also used that $\alpha$ is a limit ordinal to deduce the existence of $\gamma$ and $\beta$ such that $\delta \lt \gamma \lt \beta \lt \alpha$.

    It remains to show that $(I, i: F I \to I)$ is initial among the special $F$-algebras; however, this can be shown by an argument almost identical to the one given in \reftheo{adamekagain}.
\end{proof}

\begin{coro}{monadicityspecialFalg}
    Suppose $\lambda$ is a limit ordinal and both $F$ and $G$ preserve colimits of shape $\Downarrow \lambda$. The forgetful functor
    \[ U: \text{\rm SpAlg}(F,G) \to \ct{C} \]
    assigning to each special algebra its underlying object is monadic.
\end{coro}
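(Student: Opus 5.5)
The plan is to follow the pattern of \refcoro{monadicityFalg} and \refcoro{monadicitypointedFalg}: first obtain a left adjoint to $U$ from \reftheo{specialconvergence}, then apply Beck's monadicity theorem.

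For the left adjoint, we show that for each object $A$ of \ct{C} the comma category $A \downarrow U$ has an initial object. Set $F'X := A + FX$ with pointing $\eta'_X := \inr \circ \eta_X$. Keep $G$ unchanged, and set
\[ \tau'_X := \inr \circ \tau_X, \qquad \sigma'_X := F'(\inr) \circ \inr \circ \sigma_X : GX \to F'F'X, \]
where $F'(\inr) = A + F(\inr)$. These are clearly natural. We then check that special $(F',\eta')$-algebras are the same as pairs consisting of a special $(F,\eta)$-algebra $(Y,y)$ and a map $A \to Y$. An $F'$-algebra $[a, y] : A + FY \to Y$ is pointed exactly when $y \circ \eta_Y = 1$. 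Moreover,
\[ [a,y] \circ F'[a,y] \circ \sigma'_Y = [a,y] \circ \inr \circ F[a,y] \circ F\inr \circ \sigma_Y = y \circ Fy \circ \sigma_Y \]
and
\[ [a,y] \circ \tau'_Y = y \circ \tau_Y, \]
so speciality also matches. Morphisms correspond as well. This gives an isomorphism between $\text{SpAlg}(F',G)$ and $A \downarrow U$. Since coproducts with a fixed object commute with connected colimits, and $\Downarrow\lambda$ is connected (inhabitedness of $\Downarrow \lambda$ should either be part of the limit-ordinal hypothesis or be handled separately), $F'$ still preserves colimits of shape $\Downarrow \lambda$. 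Hence \reftheo{specialconvergence} yields the initial object.

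For Beck's theorem we must show that $U$ is conservative and creates coequalizers of $U$-split pairs. My plan is to reduce this to the verification already carried out for pointed algebras in \reftheo{checkformonadicity}. Special algebras form a full subcategory of pointed $(F,\eta)$-algebras, cut out by an equation. So it suffices to show that the algebra structure created on a split coequalizer is again special. The inverse of a bijective morphism between special algebras is automatically a morphism, so conservativity comes from the pointed case.

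For creation of coequalizers, let $f,g : (Y,y) \to (Z,z)$ be maps of special algebras, and let $e : Z \to Q$ be a split coequalizer in \ct{C} with sections $s : Q \to Z$ and $t : Z \to Y$. The pointed case gives the unique structure $q : FQ \to Q$ with $q \circ Fe = e \circ z$, namely $q = e \circ z \circ Fs$. To verify speciality, we note that $Ge$ is split epi, so it suffices to precompose both sides of the special equation with $Ge$. Naturality of $\sigma$ and $\tau$ together with $q \circ Fe = e \circ z$ then transforms both sides into $e$ composed with the two sides of the special equation for $z$, and these agree. I expect this to be the only step needing real care, and it is where I would check the details first.
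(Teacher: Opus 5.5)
Your proposal is correct and follows the paper's proof. You use the same $A + F(-)$ construction with $\eta$, $\sigma$, $\tau$ transported along $\inr$ (your $\sigma'$ equals the paper's $\inr \circ F\inr \circ \sigma_X$ by naturality of $\inr$) to get the left adjoint from \reftheo{specialconvergence}, and the same Beck verifications as \reftheo{checkformonadicity}; the only difference is that you check speciality by cancelling the split epi $Ge$, whereas the paper expands $x = h \circ y \circ Fs$ and uses naturality at $s$. Your inhabitedness caveat is sharper than the paper, which asserts without argument that $A + F(-)$ preserves colimits of shape $\Downarrow\lambda$: $\sup\emptyset$ vacuously counts as a limit ordinal, $A + F(-)$ does not preserve the initial object unless $A$ is initial, and the corollary genuinely fails for $\lambda = \sup\emptyset$, so inhabitedness of $\Downarrow\lambda$ cannot be handled by a separate (and constructively unavailable) case split and must be added as a hypothesis.
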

\begin{proof}
    To show that $U$ has a left adjoint, we need to verify that for each object $A$ in \ct{C}, the category $A \downarrow U$ has an initial object. We have already seen in \refcoro{monadicitypointedFalg} that the category of pointed $F$-algebra is equivalent to the category of algebras for the endofunctor $F^A$ defined by $X \mapsto A + FX$ with pointing $\eta_X^A = \inr \circ \eta_X$. In the present situation we also have natural transformations
    \begin{align*}
        \sigma^A: G \Rightarrow F^AF^A & \mbox{ where } \sigma_X^A := \inr \circ F\inr \circ \sigma_X \\
        \tau^A: G \Rightarrow F^A & \mbox{ where } \tau_X^A := \inr \circ \tau_X
    \end{align*}
    and a special algebra for $(F^A, G, \eta_A, \sigma_A, \tau_A)$ is nothing but a special algebra $X$ equipped with a morphism $\alpha: A \to X$. Since $F^A$ and $G$ still preserves colimits of shape $\Downarrow \lambda$, the existence of the initial object in $U \downarrow A$ follows from the previous result. The remaining verifications needed for applying Beck's monadicity theorem can be found in the proof of \reftheo{checkformonadicity}.
\end{proof}

\section{A constructive small object argument}

After the work done in the previous section, we can give a quick constructive proof of a double-categorical small object argument. We assume familiarity with \cite{bergetal25}. The main idea is to introduce a suitable notion of $\lambda$-presentability.

\begin{defi}{presentability}
Let $\ct{C}$ be a category and $\lambda$ be a limit ordinal. We say that an object $A$ in $\ct{C}$ is \emph{$\lambda$-small} if
    \[ {\rm Hom}(A, -): \ct{C} \to {\bf Sets} \]
    preserves colimits of shape $\Downarrow \lambda$.
\end{defi}

\begin{theo}{smallobjargstrong}
Let \ct{C} be a locally small and cocomplete category, and $\lambda$ be a limit ordinal. If $\mathbb{A}$ is a small double category over \ct{C} such that $Aa$ is $\lambda$-small for each object $a$ in $\mathbb{A}$, then the algebraic weak factorization system cofibrantly generated by $\mathbb{A}$ exists.
\end{theo}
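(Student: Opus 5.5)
We follow the strategy of \cite{bergetal25}, where the small object argument reduces to the existence of an initial special algebra, and replace the classical transfinite construction by \reftheo{specialconvergence} and \refcoro{monadicityspecialFalg}.

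\textbf{Setup.} Work in the arrow category $\ct{C}^{\to}$, or better the slice over a fixed object, since the awfs is built by factorising each arrow $f: X \to Y$ in $\ct{C}/Y$. The category $\ct{C}/Y$ is cocomplete and its colimits of shape $\Downarrow \lambda$ are computed as in \ct{C}. Following \cite{bergetal25}, the double category $\mathbb{A}$ yields the following data on $\ct{C}/Y$.
\begin{enumerate}
\item[(a)] A pointed endofunctor $(F, \eta)$. Its value $F(Z \to Y)$ is the pushout of $Z$ along the coproduct, over all squares from generators $Aa \to Ba$ ($a$ in $\mathbb{A}$) into $Z \to Y$, of the maps $Aa \to Ba$. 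It is the ``one step'' lifting functor.
\item[(b)] An endofunctor $G$. It is indexed by composable pairs (2-cells) of vertical arrows in $\mathbb{A}$, with domains the $Aa$.
\item[(c)] Natural transformations $\sigma: G \Rightarrow FF$ and $\tau: G \Rightarrow F$, encoding the compatibility of chosen lifts with vertical composition.
\end{enumerate}
The special $(F,G)$-algebras over $f$ are exactly the algebras for the right class of the awfs. Hence the awfs exists once the forgetful functor from special algebras to $\ct{C}/Y$ (or to $\ct{C}^{\to}$) is monadic, with the free algebra giving the factorisation. We take this reduction, including the identification of the resulting monad with the right half of an awfs via the double-categorical structure, from \cite{bergetal25} essentially verbatim. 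The only inputs we must supply constructively are the initial-algebra results.

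\textbf{Key step: preservation of $\Downarrow\lambda$-colimits.} We must show that $F$ and $G$ preserve colimits of shape $\Downarrow \lambda$. Both functors are built from three ingredients.
\begin{enumerate}
\item[(i)] Sets of squares ${\rm Hom}_{\ct{C}^{\to}}(\mathbb{A}(a), f)$, which are small by local smallness. These are computed as the set of maps $Aa \to Z$ over $Y$ and so are a subset of ${\rm Hom}(Aa, Z)$.
\item[(ii)] Coproducts indexed by these sets.
\item[(iii)] Pushouts.
\end{enumerate}
Since colimits commute with colimits, it suffices that $Z \mapsto {\rm Hom}_{/Y}(Aa, Z)$ preserves colimits of shape $\Downarrow\lambda$. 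This follows from $\lambda$-smallness of $Aa$: a colimit in $\ct{C}/Y$ is computed in \ct{C}, and ${\rm Hom}_{/Y}(Aa,Z)$ is the fibre of ${\rm Hom}(Aa,Z) \to {\rm Hom}(Aa,Y)$ over the fixed map. Fibres over a point commute with $\Downarrow\lambda$-shaped colimits of sets. Here we use that $\Downarrow\lambda$ is \emph{directed} by \refdefi{limitordinal}, and that filtered colimits of sets commute with such pullbacks constructively, via the explicit description of directed colimits of sets as quotients of a disjoint union.

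For $G$ the argument is the same, with homs from $Aa$ for the domains of composable pairs, and with naturality of the indexing in the double category. One point needs care. The coproduct over a Hom-set $H(Z)$ of a \emph{constant} object, say $\coprod_{h \in H(Z)} Ba$, is the composite of $H$ with the colimit-preserving functor $S \mapsto S \cdot Ba$. It therefore preserves the colimit whenever $H$ does.

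\textbf{Conclusion and main obstacle.} With preservation established, \refcoro{monadicityspecialFalg} gives that special algebras are monadic over $\ct{C}/Y$. Gluing over varying $Y$ (or running the same argument in $\ct{C}^{\to}$ with codomain-fixing adjustments, as in \cite{bergetal25}) produces the free awfs. The main obstacle is the constructive verification that directed colimits of shape $\Downarrow\lambda$ of sets interact correctly with the fibre-over-a-point construction and with coproducts indexed by Hom-sets. Classically this is routine, but here $\Downarrow\lambda$ is not linear. We would handle it by describing ${\rm colim}_{\Downarrow\lambda} S$ as $\coprod_\beta S_\beta$ modulo ``becoming equal further up'', and checking transitivity of this relation using only directedness of $\lambda$. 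Everything else is the formal reduction of \cite{bergetal25}, which uses no choice or excluded middle.
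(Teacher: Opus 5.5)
Your overall plan matches the paper's. You use the reduction of \cite[Theorem 18]{bergetal25} to monadicity of a category of special algebras, apply \refcoro{monadicityspecialFalg}, and check that $F$ and $G$ preserve colimits of shape $\Downarrow\lambda$ using $\lambda$-smallness of the $Aa$. For that last step the paper simply cites \cite[Proposition 11]{bergetal25}.

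The gap comes from your choice to work fibrewise in $\ct{C}/Y$. As the paper uses it, the reduction needs the special algebras to be monadic over $\ct{C}^{\to}$ itself. The free algebra on $f\colon X \to Y$ has to be initial in $f \downarrow U$, for $U$ the forgetful functor to $\ct{C}^{\to}$. That means it must be initial with respect to \emph{all} squares $(h,k)\colon f \to g$ into special algebras, including those with $k \neq 1_Y$. This is what makes the resulting monad, and hence the awfs, functorial in arbitrary squares. Monadicity of each fibre does not by itself give this, and ``gluing over varying $Y$'' is not an argument:
\begin{itemize}
\item a square with $k \neq 1_Y$ lives in no single fibre;
\item pulling $g$ back along $k$ would need pullbacks in $\ct{C}$, but only cocompleteness is assumed;
\item pushing $f$ forward to $k \circ f$ does not commute with $F$, because the squares from a generator into $f$ and into $k \circ f$ are different.
\end{itemize}

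The repair is to do what the paper does and apply \refcoro{monadicityspecialFalg} to the cocomplete category $\ct{C}^{\to}$ directly. Your key step then has to be redone, because $\Downarrow\lambda$-colimits in $\ct{C}^{\to}$ move the codomain as well. The set of squares from $Au\colon Aa \to Ab$ into $f\colon Z \to Y$ is ${\rm Hom}(Aa,Z) \times_{{\rm Hom}(Aa,Y)} {\rm Hom}(Ab,Y)$, with all three corners varying. So you need two things:
\begin{itemize}
\item ${\rm Hom}(Ab,-)$ must also preserve $\Downarrow\lambda$-colimits. This is available, since the hypothesis concerns every object of $\mathbb{A}$, codomains of vertical arrows included.
\item $\Downarrow\lambda$-colimits of sets must commute with pullbacks. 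This is where directedness of $\Downarrow\lambda$ genuinely enters, through exactly the ``equal further up'' description you sketch, with transitivity supplied by the limit-ordinal condition. A possibly empty $\Downarrow\lambda$ is harmless, since pullbacks are connected limits.
\end{itemize}
By contrast, the step you actually attribute to directedness, taking fibres over a fixed point, holds for arbitrary colimits of sets.
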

\begin{proof}
    As in the proof of \cite[Theorem 18]{bergetal25} we need to show that a certain category of special algebras is monadic over $\ct{C}^\to$. Since the latter is cocomplete as well, it suffices by \refcoro{monadicityspecialFalg} to show that the relevant functors $F$ and $G$ preserves colimits of shape $\Downarrow \lambda$. This follows from \cite[Proposition 11]{bergetal25} and the assumption that all $Aa$ are $\lambda$-small.
\end{proof}

\section{Conclusion}

We have shown that the standard account of ordinals in a constructive setting supports a useful theory of transfinite recursion, in that it can be used to give constructive proofs of  various initial algebra theorems and a strong form of Quillen's small object argument.

This paper also raises a number of questions that we have not answered and that we hope to take up in future work.
\begin{enumerate}
    \item A standard reference on transfinite arguments in the categorical setting is Kelly's paper \cite{Kelly1980A-unified}. Can we give a constructive account of this paper using ideas from this paper?
    \item More generally, there is the important theory of locally presentable and accessible categories \cite{Adamek1994Locally}. To which extend is it possible to develop this theory constructively?
    \item To which extent is it possible to prove the existence of initial algebras in categories that only have filtered colimits using a directed notion of ordinals, like the Tarski ordinals from \cite{joyalmoerdijk95}? This question is the topic of ongoing joint work with Giacomo De Antonellis.
    \item The results in this paper are most useful when there is a rich supply of limit ordinals, but we have left the question of their existence open. In particular, a natural question is what is needed to show that any set is $\lambda$-small for some limit ordinal $\lambda$. This will lead to some interesting metatheoretic considerations, that we plan to take up in a future paper.
\end{enumerate}

\bibliographystyle{plain}
\bibliography{ordinals}

\begin{thebibliography}{10}

\bibitem{aczelrathjen01}
P.~Aczel and M.~Rathjen.
\newblock Notes on constructive set theory.
\newblock Technical Report No. 40, Institut Mittag-Leffler, 2000/2001.

\bibitem{adamek74}
J.~Ad\'amek.
\newblock Free algebras and automata realizations in the language of categories.
\newblock {\em Comment. Math. Univ. Carolinae}, 15:589--602, 1974.

\bibitem{Adamek1994Locally}
J.~Ad\'amek and J.~Rosick\'y.
\newblock {\em Locally presentable and accessible categories}, volume 189 of {\em London Mathematical Society Lecture Note Series}.
\newblock Cambridge University Press, Cambridge, 1994.

\bibitem{bergetal25}
B.~van~den Berg, J.~Bourke, and P.~Seip.
\newblock A constructive approach to the double-categorical small object argument.
\newblock arXiv:2512.11692, 2025.

\bibitem{borceux94ii}
F.~Borceux.
\newblock {\em Handbook of categorical algebra. 2}, volume~51 of {\em Encyclopedia of Mathematics and its Applications}.
\newblock Cambridge University Press, Cambridge, 1994.
\newblock Categories and structures.

\bibitem{BourkeEquipping}
J.~Bourke.
\newblock Equipping weak equivalences with algebraic structure.
\newblock {\em Math. Z.}, 294(3-4):995--1019, 2020.

\bibitem{Bourke2016Accessible}
J.~Bourke and R.~Garner.
\newblock Algebraic weak factorisation systems {I}: {A}ccessible {AWFS}.
\newblock {\em J. Pure Appl. Algebra}, 220(1):108--147, 2016.

\bibitem{coquandetal23}
T.~Coquand, H.~Lombardi, and S.~Neuwirth.
\newblock Constructive theory of ordinals.
\newblock In M.~Benini; O. Beyersdorff; M. Rathjen;~P.M. Schuster, editor, {\em Mathematics for Computation - M4C}, pages 287--318. World Scientific, 2023.

\bibitem{Garner2011Understanding}
R.~Garner.
\newblock Understanding the small object argument.
\newblock {\em Appl. Categ. Structures}, 17(3):247--285, 2009.

\bibitem{dejongetal23}
T.~de Jong, N.~Kraus, F.~Nordvall~Forsberg, and C.~Xu.
\newblock Set-theoretic and type-theoretic ordinals coincide.
\newblock In {\em 38th Annual {ACM/IEEE} Symposium on Logic in Computer Science, {LICS} 2023, Boston, MA, USA, June 26-29, 2023}, pages 1--13. {IEEE}, 2023.

\bibitem{dejongetal25}
T.~de Jong, N.~Kraus, F.~Nordvall~Forsberg, and C.~Xu.
\newblock Ordinal exponentiation in homotopy type theory.
\newblock In {\em 40th Annual {ACM/IEEE} Symposium on Logic in Computer Science, {LICS} 2025, Singapore, June 23-26, 2025}, pages 262--274. {IEEE}, 2025.

\bibitem{joyalmoerdijk95}
A.~Joyal and I.~Moerdijk.
\newblock {\em Algebraic set theory}, volume 220 of {\em London Mathematical Society Lecture Note Series}.
\newblock Cambridge University Press, Cambridge, 1995.

\bibitem{Kelly1980A-unified}
G.~M. Kelly.
\newblock A unified treatment of transfinite constructions for free algebras, free monoids, colimits, associated sheaves, and so on.
\newblock {\em Bull. Austral. Math. Soc.}, 22(1):1--83, 1980.

\bibitem{Koubek1979Categorical}
V.~Koubek and J.~Reiterman.
\newblock Categorical constructions of free algebras, colimits, and completions of partial algebras.
\newblock {\em J. Pure Appl. Algebra}, 14(2):195--231, 1979.

\bibitem{krausetal21}
N.~Kraus, F.~Nordvall~Forsberg, and C.~Xu.
\newblock Connecting constructive notions of ordinals in homotopy type theory.
\newblock In F.~Bonchi and S.J. Puglisi, editors, {\em 46th International Symposium on Mathematical Foundations of Computer Science, {MFCS} 2021, Tallinn, Estonia, August 23-27, 2021}, LIPIcs, pages 70:1--70:16. Schloss Dagstuhl - Leibniz-Zentrum f{\"{u}}r Informatik, 2021.

\bibitem{martinlof84}
P.~Martin-L\"of.
\newblock {\em Intuitionistic type theory}, volume~1 of {\em Studies in Proof Theory. Lecture Notes}.
\newblock Bibliopolis, Naples, 1984.
\newblock Notes by G. Sambin.

\bibitem{pittssteenkamp21}
A.M. Pitts and S.~C. Steenkamp.
\newblock Constructing initial algebras using inflationary iteration.
\newblock In K.~Kishida, editor, {\em Proceedings of the Fourth International Conference on Applied Category Theory, {ACT} 2021, Cambridge, United Kingdom, 12-16th July 2021}, volume 372 of {\em {EPTCS}}, pages 88--102, 2021.

\bibitem{univalent13}
The Univalent~Foundations Program.
\newblock {\em Homotopy type theory---univalent foundations of mathematics}.
\newblock The Univalent Foundations Program, Princeton, NJ; Institute for Advanced Study (IAS), Princeton, NJ, 2013.

\bibitem{Quillen1967}
D.G. Quillen.
\newblock {\em Homotopical algebra}, volume No. 43 of {\em Lecture Notes in Mathematics}.
\newblock Springer-Verlag, Berlin-New York, 1967.

\bibitem{taylor96}
P.~Taylor.
\newblock Intuitionistic sets and ordinals.
\newblock {\em J. Symbolic Logic}, 61(3):705--744, 1996.

\end{thebibliography}

\appendix

\section{A comparision with Algebraic Set Theory}

In this appendix we will compare our theory of ordinals with the theory of ZF-algebras as developed by Joyal and Moerdijk in their book on \emph{Algebraic Set Theory} \cite{joyalmoerdijk95}. In particular, we claim that our theory of ordinals as given in Section 2 provides an alternative presentation of the initial ZF-algebra with a progressive successor (in \cite{joyalmoerdijk95} these ordinals are called the \emph{von Neumann ordinals}).

\begin{defi}{ZFalgebra} A \emph{ZF-algebra} is a partially ordered class $(X, \cleq)$ with small suprema additionally equipped with a unary (``successor'') operation $s: X \to X$. A morphism of ZF-algebras is a monotone map preserving small suprema and the successor operation. We will call the successor operation \emph{progressive} (or \emph{inflationary}) if $x \cleq sx$ holds for all $x \in X$.
\end{defi}

\begin{theo}{OrdinitZFalgwithinflationarysucc}
    If we equip ${\rm Ord}$ with the partial order
    \[ \alpha \cleq \beta :\Longleftrightarrow \Downarrow \alpha \subseteq \Downarrow \beta \]
    and the successor operation $s \alpha = \sup \{ \alpha \}$, then it becomes the initial ZF-algebra with a progressive successor.
\end{theo}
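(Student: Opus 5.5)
The proof has two parts: first check that $({\rm Ord}, \cleq, s)$ is a ZF-algebra with a progressive successor, then construct the unique morphism into an arbitrary such algebra by $\lt$-recursion.

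\emph{Structure.} That $\cleq$ is a preorder is immediate from its definition via inclusion of ancestor sets. Antisymmetry is exactly the Extensionality rule. For small suprema I take the operation $\bigcurlyvee$. By \reflemm{decompositionofsup}(2) we have $\Downarrow \bigcurlyvee A = \bigcup_{\alpha \in A} \Downarrow \alpha$. Since $\cleq$ is inclusion of ancestor sets, $\bigcurlyvee A$ is then trivially an upper bound of $A$ and below every other upper bound, so it is the least upper bound. Progressiveness, $\alpha \cleq s\alpha$, follows from \reflemm{decompositionofsup}(1): if $\beta \lt \alpha$ then $\beta \leq \alpha$, hence $\beta \lt s\alpha$.

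\emph{Existence of a morphism.} Let $(X, \cleq, s, \bigvee)$ be a ZF-algebra with progressive successor. Using the $\lt$-induction/recursion principle of \reflemm{propoflt}(2), I define $f : {\rm Ord} \to X$ by
\[ f(\alpha) := \bigvee_{\beta \lt \alpha} s f(\beta). \]
This is a small supremum because $\Downarrow \alpha$ is a set. I prefer recursion on $\lt$ to recursion along $\sup A$, since $\sup$ is not a free constructor and the latter would require a compatibility check. The three required properties are then checked as follows.
\begin{enumerate}
    \item[(i)] Monotonicity: if $\Downarrow\alpha \subseteq \Downarrow\beta$, then the family defining $f(\alpha)$ is a subfamily of the one defining $f(\beta)$.
    \item[(ii)] Preservation of suprema: using $\Downarrow \bigcurlyvee A = \bigcup_{\alpha\in A}\Downarrow\alpha$ and reassociating joins in $X$, we get $f(\bigcurlyvee A) = \bigvee_{\alpha \in A} \bigvee_{\beta\lt\alpha} sf(\beta) = \bigvee_{\alpha\in A} f(\alpha)$.
    \item[(iii)] Preservation of successor: by \reflemm{decompositionofsup}(1), $\Downarrow s\alpha = \{\alpha\} \cup \Downarrow\alpha$, so $f(s\alpha) = sf(\alpha) \vee f(\alpha)$. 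This equals $sf(\alpha)$ precisely because $X$ is progressive, i.e.\ $f(\alpha) \cleq sf(\alpha)$.
\end{enumerate}
This last step is the only place the progressiveness hypothesis on $X$ is used. It is also the step I would double-check, since the rest is bookkeeping.

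\emph{Uniqueness.} Let $g$ be any morphism of ZF-algebras. By \reflemm{decompositionofsup}(4), $\alpha = \bigcurlyvee_{\beta\lt\alpha} s\beta$. Since $g$ preserves suprema and successor, this gives $g(\alpha) = \bigvee_{\beta \lt \alpha} s g(\beta)$. A $\lt$-induction then yields $g = f$.

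I expect the main obstacle to be foundational rather than mathematical. One must make sure that recursion on $\lt$ into a class $X$ is legitimate in the intended metatheory, given that \reflemm{propoflt}(2) is stated only as an induction scheme. One also needs all suprema used to be indexed by sets, such as $\Downarrow\alpha$ and $\bigcup_{\alpha\in A}\Downarrow\alpha$. In {\bf CZF} this is handled by the usual set-recursion theorem, applied to the set-indexed families above.
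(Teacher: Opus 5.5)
Your proof is correct and follows essentially the same route as the paper. Both use $\bigcurlyvee$ for suprema via $\Downarrow \bigcurlyvee A = \bigcup_{\alpha \in A} \Downarrow \alpha$, define the map by recursion as $\varphi(\alpha) = \bigsqcup_{\beta < \alpha} \sigma(\varphi(\beta))$, use progressiveness of the target only in the successor step, and get uniqueness from item (4) of \reflemm{decompositionofsup}; you are just more explicit than the paper in noting that antisymmetry of $\cleq$ is Extensionality and in spelling out the $<$-induction for uniqueness.
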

\begin{proof}
    For the operation $\bigcurlyvee$ defined before \reflemm{decompositionofsup}, we have $\Downarrow \bigcurlyvee A = \bigcup_{\alpha \in A} \Downarrow \alpha$, which implies that it computes the small suprema in the partial order $({\rm Ord}, \cleq)$ has small suprema given by $\bigcurlyvee$. The successor operator we have just defined is also clearly progressive.

    If $(X, \sqsubseteq, \sigma)$ is another ZF-algebra with an inflationary successor, then we can define by recursion a map $\varphi: {\rm Ord} \to X$ satisfying
    \[ \varphi(\alpha) := \bigsqcup_{\beta \in \Downarrow \alpha} \sigma(\varphi(\beta)). \]
    In fact, it follows from item (4) of \reflemm{decompositionofsup} that any morphism of ZF-algebras ${\rm Ord} \to X$ must be defined this way. So it remains to verify that $\varphi$ does indeed define a morphism of ZF-algebras.

    First of all, $\varphi$ is monotone (with respect to $\cleq$) by construction. 
    
    To see that it also preserves small suprema note that
    \[ \varphi(\bigcurlyvee A) = \bigsqcup_{\beta \in \Downarrow \bigcurlyvee A} \sigma(\varphi(\beta)) = \bigsqcup_{\beta \in \bigcup_{\alpha \in A} \Downarrow \alpha} \sigma(\varphi(\beta)) = \bigsqcup_{\alpha \in A} \bigsqcup_{\beta \in \Downarrow \alpha} \sigma(\varphi(\beta)) = \bigsqcup_{\alpha \in A} \varphi(\alpha). \]

    Finally, we need to check that it preserves the successor operation. To see this, note that $\sigma$ is progressive, and so
    \[ \varphi(s\alpha) = \bigsqcup_{\beta \in \Downarrow \alpha \cup \{ \alpha \}} \sigma(\varphi(\beta)) = \sigma(\varphi(\alpha)) \sqcup \bigsqcup_{\beta \in \Downarrow \alpha} \sigma(\varphi(\beta)) = \sigma(\varphi(\alpha)) \sqcup \varphi(\alpha) = \sigma(\varphi(\alpha)), \]
    which completes the proof.
\end{proof}

It may be useful to see the converse of this: if $(O, \cleq, s)$ is the initial ZF-algebra with a progressive successor $s$, then it is a model of our theory of ordinals. Let us first define:
\begin{eqnarray*} 
    \alpha \lt \beta & \Leftrightarrow_{def} & s\alpha \cleq \beta \\
    \alpha \leq \beta & \Leftrightarrow_{def} & \alpha \lt \beta \mbox{ or } \alpha = \beta \\
    \Downarrow \alpha & := & \{ \, \beta \, : \, \beta \lt \alpha \, \}
\end{eqnarray*}

From the fact that $(O, \cleq)$ is a partial order and $s$ is progressive it follows that:
\begin{lemm}{elementaryproperties}
    \begin{enumerate}
        \item[(i)] $\alpha \lt \beta \Longrightarrow \alpha \leq \beta \Longrightarrow \alpha \cleq \beta$.
        \item[(ii)] Both $\leq$ and $\lt$ are transitive. 
        \item[(iii)] $\alpha \lt \beta, \beta \cleq \beta' \Longrightarrow \alpha \lt \beta'$.
        \item[(iv)] $\cleq$ is a preorder. 
        \item[(v)] $\alpha \lt s\alpha$. 
        \item[(vi)] $\alpha \leq \beta \Longrightarrow s\alpha \cleq s\beta$.
    \end{enumerate}
\end{lemm}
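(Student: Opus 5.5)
We derive all six items directly from the definitions $\alpha \lt \beta :\Leftrightarrow s\alpha \cleq \beta$ and $\alpha \leq \beta :\Leftrightarrow \alpha \lt \beta \lor \alpha = \beta$. We use only that $\cleq$ is a partial order, that $s$ is progressive ($\gamma \cleq s\gamma$ for all $\gamma$), and that $s$ is monotone with respect to $\cleq$. Monotonicity is the one point needing care. For an arbitrary ZF-algebra it is not part of the definition, so we first establish it for the initial algebra $O$. The plan is to show, by induction along the initiality of $O$, that the sub-ZF-algebra $\{\alpha : (\forall \beta)(\alpha \cleq \beta \to s\alpha \cleq s\beta)\}$ is everything. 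If this route turns out to be awkward, we avoid it: item (vi) is only needed for $\leq$, and the other items do not use monotonicity of $s$, as the derivations below show.

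For item (i), we show $\alpha \lt \beta \Rightarrow \alpha \cleq \beta$. From $s\alpha \cleq \beta$ and $\alpha \cleq s\alpha$ we get $\alpha \cleq \beta$. The implication from $\leq$ to $\cleq$ then follows by cases, using reflexivity of $\cleq$ when $\alpha = \beta$. The middle implication $\alpha \lt \beta \Rightarrow \alpha \leq \beta$ holds by definition. Item (iii) is immediate from transitivity of $\cleq$: $s\alpha \cleq \beta \cleq \beta'$ gives $s\alpha \cleq \beta'$. For item (ii), suppose $\alpha \lt \beta$ and $\beta \lt \gamma$. By (i), $\beta \lt \gamma$ gives $\beta \cleq \gamma$, and then (iii) yields $\alpha \lt \gamma$. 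Transitivity of $\leq$ follows by a case split on the two disjuncts, where the equality cases are trivial. Item (iv) is simply part of the hypothesis, since $\cleq$ is a partial order and hence a preorder. Item (v), $\alpha \lt s\alpha$, means $s\alpha \cleq s\alpha$, which is reflexivity.

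For item (vi), we split on the two cases of $\alpha \leq \beta$. If $\alpha = \beta$, reflexivity gives $s\alpha \cleq s\beta$. If $\alpha \lt \beta$, then $s\alpha \cleq \beta \cleq s\beta$ by progressiveness, so $s\alpha \cleq s\beta$ by transitivity. This case therefore needs no monotonicity of $s$ either, so the potential obstacle noted above disappears. The whole proof is a chain of one-line verifications with no genuinely hard step. The only thing to watch is to proceed by case distinction on the disjunctions in $\leq$ rather than by any trichotomy, so that the argument remains constructive.
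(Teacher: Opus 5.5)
Your proof is correct and follows the paper's approach. The paper gives no proof beyond remarking that all six items follow from $\cleq$ being a partial order and $s$ being progressive, and your one-line verifications, with case splits on the disjunction defining $\leq$, use exactly those two facts.

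Dropping the monotonicity detour was right, because monotonicity of $s$ with respect to $\cleq$ is constructively unprovable. Take $0 := \sup\emptyset$ and $\alpha := \sup\{x \in \{0\} : p\}$; then $\alpha \cleq s0$, while $s\alpha \cleq ss0$ would give $\alpha \in \{0, s0\}$, i.e.\ $\neg p \lor p$. This is why (vi) is stated only for $\leq$.
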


The main step in the analysis of the Von Neumann ordinals is the following. (Compare Theorems 1.2 and 2.3 in \cite{joyalmoerdijk95}.)

\begin{prop}{downsetsinVNOrd} For the initial ZF-algebra with progressive successor $(O, \cleq, s)$ the following hold:
\begin{enumerate}
    \item $\alpha \lt \bigcurlyvee_{i \in I} \beta_i \Longleftrightarrow (\exists i \in I) \, \alpha \lt \beta_i$. Put differently: $\Downarrow \bigcurlyvee_i \beta_i = \bigcup_i \Downarrow \beta_i$.
    \item $\alpha \lt s\beta \Longleftrightarrow \alpha \leq \beta$. Put differently: $\Downarrow s\beta = \{ \beta \} \cup \Downarrow \beta $.
\end{enumerate}
\end{prop}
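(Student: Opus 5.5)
The plan is not to argue inside $(O, \cleq, s)$ directly, but to transport the statement along the unique isomorphism with ${\rm Ord}$. By \reftheo{OrdinitZFalgwithinflationarysucc}, ${\rm Ord}$ with $\alpha \cleq \beta :\Leftrightarrow\ \Downarrow \alpha \subseteq\ \Downarrow \beta$, successor $s\alpha = \sup\{\alpha\}$ and suprema $\bigcurlyvee$ is an initial ZF-algebra with progressive successor. Initial objects are unique up to unique isomorphism, and this argument only uses the universal property, so it goes through for large categories as well. Hence there is an isomorphism of ZF-algebras $\Phi : O \to {\rm Ord}$: an order isomorphism for $\cleq$ that commutes with $s$ and with small suprema.

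Since the relation $\alpha \lt \beta :\Leftrightarrow s\alpha \cleq \beta$ on $O$ is defined purely in terms of $s$ and $\cleq$, it is preserved and reflected by $\Phi$. The same holds for $\leq$, as it is defined from $\lt$ and equality. The key step is to check that, computed in ${\rm Ord}$, the relation $s\alpha \cleq \beta$ coincides with the ancestor relation $\alpha \lt \beta$ of Section 2. By \reflemm{decompositionofsup}(1) we have $\Downarrow s\alpha = \{\alpha\} \cup \Downarrow \alpha$. Therefore $s\alpha \cleq \beta$ means $\{\alpha\} \cup \Downarrow\alpha \subseteq\ \Downarrow\beta$. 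This clearly implies $\alpha \in\ \Downarrow\beta$. Conversely, if $\alpha \lt \beta$ then $\Downarrow\alpha \subseteq\ \Downarrow\beta$ by transitivity of $\lt$ (\reflemm{propoflt}(1)). So the two relations agree, and consequently the two notions of $\leq$ agree as well.

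With this identification, both claims reduce to facts already proved for ${\rm Ord}$. For item (1), $\Phi$ sends $\bigcurlyvee_i \beta_i$ in $O$ to $\bigcurlyvee_i \Phi\beta_i$ in ${\rm Ord}$, which is the operation $\bigcurlyvee$ of Section 2 (see the proof of \reftheo{OrdinitZFalgwithinflationarysucc}). Then $\alpha \lt \bigcurlyvee_i \beta_i \Leftrightarrow (\exists i)\, \alpha \lt \beta_i$ is exactly \reflemm{decompositionofsup}(2). For item (2), $\Phi$ commutes with $s$, and $\alpha \lt s\beta \Leftrightarrow \alpha \leq \beta$ is \reflemm{decompositionofsup}(1). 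Transporting back along $\Phi^{-1}$ gives both statements in $O$.

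I expect the only real subtlety to be the identification of $\lt$ in the previous step, namely making sure that ``$s\alpha \cleq \beta$'' in ${\rm Ord}$ is really $\alpha \in\ \Downarrow\beta$ and not something weaker. This is where transitivity is essential. If one instead wants an argument internal to $O$, in the spirit of Joyal--Moerdijk, the easy directions follow from \reflemm{elementaryproperties}(iii) and (vi). For the hard directions I would use initiality with a suitable sub-ZF-algebra: the collection of $\gamma$ for which the property ``$\alpha \lt \gamma \Rightarrow$ the desired decomposition holds'' is closed under $s$ and suprema. The transport argument, however, avoids this.
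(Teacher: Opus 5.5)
Your argument is correct, but it takes a different route from the paper's. You bring in a concrete model from outside. \reftheo{OrdinitZFalgwithinflationarysucc}, together with uniqueness of initial objects, gives an isomorphism $\Phi\colon O\to{\rm Ord}$. You then correctly identify $s\alpha\cleq\beta$ in ${\rm Ord}$ with the ancestor relation, using $\Downarrow s\alpha=\{\alpha\}\cup\Downarrow\alpha$ and transitivity, after which both items are just \reflemm{decompositionofsup}(1),(2). The paper instead builds its comparison model out of $O$ itself. It takes the poset reflection $P$ of ${\rm Pow}(O)$ under ``every element of $E$ is $\le$ some element of $F$'', with unions as suprema and successor $E\mapsto\{\bigcurlyvee_{\alpha\in E}s\alpha\}$. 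The map $r(E)=\bigcurlyvee_{\alpha\in E}s\alpha$ is a morphism $P\to O$, and initiality yields a section $i$ of $r$. One checks $is\alpha=\{\alpha\}$, hence $ir=1$. Since in $P$ successors are singletons and suprema are unions, both items can be read off there.

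Both proofs transport along an isomorphism with a concrete model. Yours is shorter and reuses Section 2, but it presupposes a class ${\rm Ord}$ satisfying the Section 2 axioms. Once $O\cong{\rm Ord}$ is available, the rest of the converse (\reflemm{conseqofinduction}, \reftheo{VNordmodelofOrd}) is immediate, so the appendix's converse becomes a triviality. The paper's version uses only $O$ and its initiality. That is what is needed if the initial ZF-algebra is taken as primitive, as in Joyal--Moerdijk, and the Section 2 axioms are to be derived rather than a model of them assumed.

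Finally, your fallback sketch would not work. An argument that uses initiality only through ``a sub-ZF-algebra closed under $s$ and suprema is everything'' cannot give the hard directions. Take the truth values (subsets of a singleton, ordered by inclusion, with unions as suprema) and let the successor be constantly $\top$. This algebra has no proper sub-ZF-algebra, since every $p$ is the supremum of the family $(s\bot)_{x\in p}$. Yet $\top< s\bot$ holds while $\top\le\bot$ fails. A concrete model such as $P$ or ${\rm Ord}$ is exactly what is needed to control $\cleq$.
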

\begin{proof} Consider ${\rm Pow}(O)$, the collection of subsets of $O$, preordered as follows:
\[ E \leq F \Longleftrightarrow_{\rm def} (\forall \alpha \in E) \, (\exists \beta \in F) \, \alpha \leq \beta. \]
This preorder has suprema given by unions and comes equipped with a progressive successor operation 
\[ \sigma(E) = \{ \bigcurlyvee_{\alpha \in E} s\alpha \}. \]
By taking its poset reflection, we obtain another ZF-algebra $(P, \leq, \sigma)$ with progressive successor. (Note that something is needed to argue that it still has suprema: in a set theory like {\bf CZF} we would have to use the Collection Axiom.)

It is not hard to see that $r: P \to O$ defined by
\[ r(E) := \bigcurlyvee_{\alpha \in E} s\alpha \]
is a morphism of ZF-algebras. Therefore the initiality of $O$ gives us a morphism of ZF-algebras $i: O \to P$ such that $r \circ i = 1_O$. In other words, we have that:
\[ \alpha = \bigcurlyvee_{\beta \in i(\alpha)} s\beta. \]

From this it follows that
\[ is(\alpha) = \sigma(i(\alpha)) = \{ \bigvee_{\beta \in i(\alpha)} s\beta \} = \{ \alpha \}, \]
and therefore that
\[ ir(E) = i(\bigcurlyvee_{\alpha \in E} s\alpha) = \bigvee_{\alpha \in E} is\alpha = \bigvee_{\alpha \in E} \{ \alpha \} = E. \]
We conclude that $r$ is an isomorphism.

For the first property, we note that it holds in $P$ and therefore in $O$ as well. Indeed, $\alpha \lt \bigcurlyvee_{i \in I} \beta_i$ is equivalent to $s\alpha \cleq \bigcurlyvee_{i \in I} \beta_i$ and in $P$ the successor is a singleton and the supremum a union.

For the second property, we observe that
\[ \beta \lt s(\alpha) \Rightarrow s(\beta) \cleq s(\alpha) \Rightarrow is(\beta) \leq is(\alpha) \Rightarrow \{ \beta \} \leq \{ \alpha \} \Rightarrow \beta \leq \alpha. \]
This completes the proof.
\end{proof}

The initiality of $(O, \cleq, s)$ also gives us following induction principle:
\begin{equation} \label{indforO} 
    (\forall \alpha) \, \big( \, \varphi(\alpha) \to \varphi(s\alpha) \, \big) \to (\forall \alpha_i) \big( \, (\forall i \in I) \, \varphi(\alpha_i) \to \varphi(\bigvee_i \alpha_i) \, \big) \to (\forall \alpha) \, \varphi(\alpha).
\end{equation}
The next lemma lists some consequences of this.

\begin{lemm}{conseqofinduction}
\begin{enumerate}
    \item[(i)] $\big( \, (\forall \alpha) \, ( \, (\forall \beta \lt \alpha) \, \psi(\beta)  \to \psi(\alpha) \, ) \, \big) \to (\forall \alpha) \, \psi(\alpha)$.
    \item[(ii)] $\Downarrow \alpha$ is a set.
    \item[(iii)] $\alpha = \bigvee_{\beta \in \Downarrow \alpha} s\beta$.
    \item[(iv)] $\Downarrow \alpha = \Downarrow \beta$ implies $\alpha = \beta$.
\end{enumerate}
\end{lemm}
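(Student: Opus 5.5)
The plan is to derive all four items from the induction principle (\ref{indforO}), combined with Proposition \ref{downsetsinVNOrd} and Lemma \ref{elementaryproperties}. The central observation is that Proposition \ref{downsetsinVNOrd} computes $\Downarrow$ on successors and on suprema. This lets us prove statements of the form ``for all $\alpha$'' by checking them on $s\alpha$ and on $\bigvee_i \alpha_i$ separately.

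For (i), assume the hypothesis $(\forall \alpha)\,((\forall \beta \lt \alpha)\,\psi(\beta) \to \psi(\alpha))$. I will prove $\varphi(\alpha) :\equiv (\forall \beta \lt \alpha)\,\psi(\beta)$ by (\ref{indforO}).
\begin{itemize}
\item Successor step: assume $\varphi(\alpha)$. By Proposition \ref{downsetsinVNOrd}(2), $\beta \lt s\alpha$ means $\beta \lt \alpha$ or $\beta = \alpha$. In the first case $\psi(\beta)$ holds by $\varphi(\alpha)$. In the second, $\psi(\alpha)$ follows from $\varphi(\alpha)$ and the hypothesis.
\item Supremum step: by Proposition \ref{downsetsinVNOrd}(1), $\beta \lt \bigvee_i\alpha_i$ means $\beta \lt \alpha_i$ for some $i$, and $\varphi(\alpha_i)$ gives $\psi(\beta)$.
\end{itemize}
Finally, $\psi(\alpha)$ follows from $\varphi(\alpha)$ and the hypothesis.

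For (ii), I will again use (\ref{indforO}), now with $\varphi(\alpha)$ stating that $\Downarrow \alpha$ is a set.
\begin{itemize}
\item Successor step: $\Downarrow s\alpha = \{\alpha\} \cup \Downarrow\alpha$, which is a set.
\item Supremum step: $\Downarrow \bigvee_i \alpha_i = \bigcup_i \Downarrow \alpha_i$ is a union of a set-indexed family of sets, hence a set.
\end{itemize}
One subtlety needs care here. The statement ``$\Downarrow\alpha$ is a set'' means the class $\{\beta : \beta \lt \alpha\}$ is coextensive with some set. Forming the union then requires choosing, for each $i$, a witnessing set. In {\bf CZF} I would avoid this choice by proving the stronger uniquely determined statement directly (the set is unique by extensionality), so that replacement and union apply. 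Alternatively one can invoke Strong Collection.

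For (iii), I will show $\alpha = \bigvee_{\beta \lt \alpha} s\beta$ by antisymmetry of $\cleq$.
\begin{itemize}
\item The inequality $\bigvee_{\beta\lt\alpha} s\beta \cleq \alpha$ is immediate, since $\beta \lt \alpha$ means by definition $s\beta \cleq \alpha$.
\item For the converse, I use (\ref{indforO}) again. For $s\alpha$: $\Downarrow s\alpha \ni \alpha$, so $s\alpha \cleq \bigvee_{\beta \lt s\alpha} s\beta$. For $\bigvee_i\alpha_i$: by the induction hypothesis each $\alpha_i \cleq \bigvee_{\beta\lt\alpha_i} s\beta$, and the latter is $\cleq \bigvee_{\beta \lt \bigvee_i\alpha_i} s\beta$ by Proposition \ref{downsetsinVNOrd}(1).
\end{itemize}
Note that (ii) is needed for the supremum in (iii) to be small.

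Item (iv) is then immediate from (iii): if $\Downarrow\alpha = \Downarrow\beta$, both sides of (iii) index the same family, so $\alpha = \beta$.

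The main obstacle I expect is the set-existence issue in (ii), that is, formulating the induction so that no choice is used in the supremum step. The remaining items are routine once Proposition \ref{downsetsinVNOrd} is available.
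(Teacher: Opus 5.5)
Your proposal is correct and is essentially the paper's proof: (i) by applying (\ref{indforO}) to $(\forall \beta \lt \alpha)\,\psi(\beta)$, (ii) and (iii) by (\ref{indforO}) with \refprop{downsetsinVNOrd} computing $\Downarrow$ on successors and suprema, and (iv) as a direct consequence of (iii). Two parts of your write-up only make explicit what the paper's terse proof leaves implicit: splitting (iii) into the trivial inequality $\bigvee_{\beta \lt \alpha} s\beta \cleq \alpha$ plus an induction for the converse, and observing that the witnessing set in (ii) is unique, so that Replacement suffices.
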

\begin{proof} (i) Prove $(\forall \beta \lt\alpha) \, \psi(\alpha)$ by the induction principle (\ref{indforO}).

Items (ii) and (iii) are also proved by the induction principle (\ref{indforO}) using the previous proposition.

Item (iv) is a direct consequence of (iii).
\end{proof}

\begin{theo}{VNordmodelofOrd}
    Suppose $(O, \cleq, s)$ is the initial ZF-algebra with progressive successor. With a sup defined as
    \[ \sup: {\rm Pow}({\rm Ord}) \to {\rm Ord}: A \mapsto \bigcurlyvee_{\alpha \in A} s\alpha, \]
    it becomes a model of our theory of ordinals. 
\end{theo}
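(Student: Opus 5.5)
We have to check that $O$, with $\Downarrow$ as defined above and $\sup A := \bigcurlyvee_{\alpha \in A} s\alpha$, satisfies the three clauses of the theory of Section 2. These are: the equation (Ancestors), the principle (Induction), and (Extensionality). We also need $\Downarrow \alpha$ to be a set, so that $\Downarrow$ really is an operation ${\rm Ord} \to {\rm Pow}({\rm Ord})$; this is \reflemm{conseqofinduction}(ii). We will not re-prove any properties of $\lt$. Everything follows from \refprop{downsetsinVNOrd} and \reflemm{conseqofinduction}, and the order is as follows.

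First, (Ancestors). For a set $A \subseteq O$ we combine both parts of \refprop{downsetsinVNOrd}:
\[ \Downarrow \sup A = \Downarrow \bigcurlyvee_{\alpha \in A} s\alpha = \bigcup_{\alpha \in A} \Downarrow s\alpha = \bigcup_{\alpha \in A} \big( \{\alpha\} \cup \Downarrow \alpha \big) = A \cup \bigcup_{\alpha \in A} \Downarrow \alpha. \]
This is exactly the recursive clause. Consequently the relation $\lt$ introduced in the appendix ($s\alpha \cleq \beta$) coincides with the relation ``is an ancestor of'' determined by $\sup$ and $\Downarrow$. This removes any ambiguity in what follows.

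Second, (Extensionality). This is literally \reflemm{conseqofinduction}(iv).

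Third, (Induction). Assume $(\forall A)\,\big((\forall \alpha \in A)\,\varphi(\alpha) \to \varphi(\sup A)\big)$. We prove $(\forall \alpha)\,\varphi(\alpha)$ using the $\lt$-induction of \reflemm{conseqofinduction}(i). So suppose $\varphi(\beta)$ holds for all $\beta \lt \alpha$. Take $A := \Downarrow \alpha$, which is a set by (ii). The hypothesis then gives $\varphi(\sup \Downarrow \alpha)$. By \reflemm{conseqofinduction}(iii) we have $\sup \Downarrow \alpha = \bigcurlyvee_{\beta \in \Downarrow\alpha} s\beta = \alpha$, and hence $\varphi(\alpha)$.

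I expect essentially no obstacle here, since all the real work was done in \refprop{downsetsinVNOrd}. The one point needing care is to make sure the induction principle in \reflemm{conseqofinduction}(i) is applied to an arbitrary formula $\varphi$, which is legitimate since (\ref{indforO}) is a scheme. A second point is that the identification $\sup \Downarrow \alpha = \alpha$ uses (iii) rather than the Section 2 lemma, because we are now proving the axioms and must not assume them.
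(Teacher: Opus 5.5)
Your proof is correct and follows the paper's structure: you get (Ancestors) from both parts of \refprop{downsetsinVNOrd}, and (Induction) from the $\lt$-induction of \reflemm{conseqofinduction}(i). The paper first establishes $s\alpha = \sup\{\alpha\}$ and $\bigcurlyvee A = \sup\big(\bigcup_{\alpha\in A}\Downarrow\alpha\big)$ by comparing ancestors and invoking extensionality. You instead go straight to $\sup(\Downarrow\alpha) = \bigcurlyvee_{\beta\in\Downarrow\alpha}s\beta = \alpha$, which is \reflemm{conseqofinduction}(iii) unfolded through the definition of $\sup$ and is exactly what the induction step needs. You also make explicit that $\Downarrow\alpha$ is a set and that (Extensionality) holds, both of which the paper leaves implicit.
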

\begin{proof}
    We have
    \begin{eqnarray*} \beta \lt \sup A & \Leftrightarrow & \beta \lt \bigcurlyvee_{\alpha \in A} s\alpha \\ & \Leftrightarrow & (\exists \alpha \in A) \, \beta \lt s\alpha \\ & \Leftrightarrow & (\exists \alpha \in A) \, \beta \leq \alpha \\ & \Leftrightarrow  & \beta \in A \lor (\exists \alpha \in A) \, \beta \lt \alpha. \end{eqnarray*}
    Therefore it remains to check that $O$ satisfies the induction principle:
    \begin{displaymath}
        \begin{array}{cc}
        ({\rm Induction}) & \frac{(\forall A) \, \big( \, (\forall \alpha \in A) \, \varphi(\alpha)  \to \varphi(\sup A) \, \big)}{(\forall \alpha) \, \varphi(\alpha) }
        \end{array}
    \end{displaymath}
    This will follow from the induction principle (i) in \reflemm{conseqofinduction} once we prove that:
    \begin{enumerate}
        \item[(i)] $s\alpha = \sup(\{ \alpha \})$
        \item[(ii)] $\bigcurlyvee A = \sup(\bigcup_{\alpha \in A} \Downarrow \alpha)$
    \end{enumerate}
    For this we note that we have
    \[ \beta \lt \sup(\{ \alpha \})  \Leftrightarrow \beta = \alpha \lor \beta \lt \alpha \Leftrightarrow \beta \lt s\alpha, \]
    and therefore $s\alpha = \sup(\{ \alpha \})$ by the previous proposition and item (iv) from \reflemm{conseqofinduction}.

    Finally, we also have
    \[ \beta \lt \sup(\bigcup_{\alpha \in A} \Downarrow \alpha) \Leftrightarrow (\exists \alpha \in A) \, (\exists \gamma \lt \alpha) \, \beta \leq \gamma \Leftrightarrow (\exists \alpha \in A) \, \beta \lt \alpha, \]
    and therefore $\bigcurlyvee A = \sup(\bigcup_{\alpha \in A} \Downarrow \alpha)$ by the previous proposition and item (iv) from \reflemm{conseqofinduction}.
\end{proof}

\section{Criteria for monadicity}

\begin{defi}{splitcoequalizer}
    A diagram of the form
    \begin{displaymath}
        \begin{tikzcd}
            Z \ar[r, shift left, "f"] \ar[r, shift right, "g"'] & Y \ar[r, "h"] & X
        \end{tikzcd}
    \end{displaymath}
    is a \emph{split coequalizer} if $h \circ f = h \circ g$ and $h$ and $f$ have sections $s$ and $t$ such that $g \circ t = s \circ h$.
\end{defi}

\begin{theo}{checkformonadicity}
    Let $F$ be an endofunctor on a cocomplete category \ct{C}. If \ct{D} is the category of $F$-algebras and $U: \ct{D} \to \ct{C}$ the forgetful functor, then:
    \begin{enumerate}
        \item[(i)] $U$ is conservative (reflects isomorphisms).
        \item[(ii)] Any parallel pair of maps $f, g: Z \to Y$ in \ct{D} such that $Uf, Ug: UZ \to UY$ has a split coequalizer in \ct{C} has a coequalizer $h: Y \to X$ in \ct{D} preserved by $U$. 
    \end{enumerate}
    The same statements hold if $U$ is the forgetful functor and \ct{D} is the category of algebras for a pointed or special endofunctor.
\end{theo}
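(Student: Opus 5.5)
For (i) I will argue directly. Suppose $\varphi: (A,a) \to (B,b)$ is a morphism of $F$-algebras whose underlying map $\varphi: A \to B$ has an inverse $\psi$ in \ct{C}. I then check that $\psi$ is again an algebra morphism:
\[ \psi \circ b = \psi \circ b \circ F\varphi \circ F\psi = \psi \circ \varphi \circ a \circ F\psi = a \circ F\psi. \]
So $\psi$ is an inverse of $\varphi$ in \ct{D}. Pointed and special algebras form full subcategories of $F$-\textrm{Alg}, so the same argument applies to them verbatim.

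For (ii), let $f,g: (Z,z) \to (Y,y)$ be algebra maps. Suppose $Uf, Ug$ have a split coequalizer $h: Y \to X$ with sections $s$ of $h$ and $t$ of $f$ satisfying $g \circ t = s \circ h$.

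\emph{Algebra structure on $X$.} I define $x := h \circ y \circ Fs: FX \to X$. Since split coequalizers are absolute, $Fh$ is a coequalizer of $Ff, Fg$. It follows that $x$ is the unique map with $x \circ Fh = h \circ y$. To see this, note that $h \circ y$ coequalizes $Ff, Fg$, because $h\circ y\circ Ff = h \circ f \circ z = h \circ g \circ z = h \circ y \circ Fg$. So some map $x'$ satisfies $x' \circ Fh = h \circ y$, and precomposing with $Fs$ gives $x' = h \circ y \circ Fs$. In particular $h$ is then an algebra map $(Y,y)\to(X,x)$.

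\emph{Universal property.} Let $k: (Y,y) \to (W,w)$ be an algebra map with $k f = k g$. In \ct{C} there is a unique $l$ with $l h = k$, namely $l = k s$. It remains to show $l$ is an algebra map. Since $Fh$ is (split) epi, it suffices to compute
\[ l \circ x \circ Fh = l \circ h \circ y = k \circ y = w \circ Fk = w \circ Fl \circ Fh. \]
Uniqueness in \ct{D} follows from uniqueness in \ct{C}. Hence $h$ is a coequalizer in \ct{D}, and $U$ preserves it by construction.

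\emph{Pointed and special cases.} Here I must additionally check that $(X,x)$ again satisfies the relevant equations. Both checks use that $h$ is split epi, so I may verify the equations after precomposition with $h$, $Fh$ or $Gh$ as appropriate. For pointedness,
\[ x \circ \eta_X \circ h = x \circ Fh \circ \eta_Y = h \circ y \circ \eta_Y = h. \]
For specialness, I precompose both sides with $Gh$ and use naturality of $\sigma$ and $\tau$:
\[ x \circ Fx \circ \sigma_X \circ Gh = x \circ Fx \circ FFh \circ \sigma_Y = x \circ F(h \circ y)\circ \sigma_Y = h \circ y \circ Fy \circ \sigma_Y, \]
and likewise
\[ x \circ \tau_X \circ Gh = x \circ Fh \circ \tau_Y = h \circ y \circ \tau_Y. \]
These two expressions agree because $(Y,y)$ is special. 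Since $Gh$ is split epi (as $G$ preserves sections), the equation holds for $X$. The universal property then transfers unchanged, because the subcategories are full.

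The only real subtlety is that $Fh$ (and $Gh$) must be epimorphisms, which is needed both to define $x$ and to cancel. Splitness gives this for free, since functors preserve split epis. No choice principles are involved, so the argument is constructive.
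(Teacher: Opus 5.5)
Your proposal is correct and follows the paper's proof essentially step for step: you use the same structure map $x = h \circ y \circ Fs$, the same factorisation $l = k \circ s$, and the same transfer of the pointed and special equations via naturality of $\eta$, $\sigma$ and $\tau$. The only difference is that you verify equations by cancelling the split epis $h$, $Fh$, $Gh$ rather than pushing $s$ through directly, and your absoluteness argument makes explicit why $x \circ Fh = h \circ y$ holds --- a step the paper asserts without comment, and which needs $s \circ h = g \circ t$ and $f \circ t = 1_Y$.
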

\begin{proof}
    It is easy to see that the forgetful functor reflects isomorphisms in the case of $F$-algebras, which also implies that it doesthe seem in the case of pointed or special algebras, as these form full subcategories.

    So let $(Z, z: FZ \to Z)$ and $(Y, y: FY \to Y)$ be $F$-algebras and $f, g: (Z, z) \to (Y, y)$ be morphisms of $F$-algebras. If there are maps $h: Y \to X, s: Y \to X, t: Y \to Z$ such that
    \[ h \circ f = h \circ g, \quad h \circ s = 1_X, \quad f \circ t = 1_Y, \quad \mbox{and} \quad g \circ t = s \circ h, \]
    then we can turn $X$ into an $F$-algebra $(X, x: FX \to X)$ by defining
    \[ x:=  h \circ y \circ Fs. \]
    This turns $h$ into a morphism of $F$-algebras, as $x \circ Fh = h \circ y \circ Fs \circ Fh = h \circ y$.

    To see that $h$ is a coequalizer in the category of $F$-algebras, let us assume that $(V, v: TV \to V)$ is an $F$-algebra and $k: (Y, y) \to (V, v)$ is a morphism of $F$-algebras such that $k \circ f = k \circ g$. Put $l := k \circ s$. 
    
    We claim that $l$ is a morphism of $F$-algebras $(X, x) \to (V, v)$ such that $l \circ h = k$. Indeed, we have
    \[ l \circ h = k \circ s \circ h = k \circ g \circ t = k \circ f \circ t = k, \]
    and 
    \[ l \circ x = l \circ h \circ y \circ Fs = k \circ y \circ Fs = v \circ Fk \circ Fs = v \circ Fl. \]
    The morphism $l$ is necessarily unique with these properties as $h$ is a split epi in \ct{C}.

    It remains that verify that $(X,x)$ will be pointed or special whenever $(Y, y)$ is. If $(Y, y)$ is pointed, then the following calculation
    \[ x \circ \eta_X = h \circ y \circ Fs \circ \eta_X = h \circ y \circ \eta_Y \circ s = h \circ s = 1_X \]
    shows that $(X, x)$ is pointed as well. And if $(Y, y)$ is special, then we have
    \begin{eqnarray*}
        x \circ Fx \circ \sigma_X & = & x \circ Fh \circ Fy \circ FFs \circ \sigma_X \\
        & = & h \circ y \circ Fy \circ \sigma_Y \circ Gs \\
        & = & h \circ y \circ \tau_Y \circ Gs \\
        & = & h \circ y \circ Fs \circ \tau_X \\
        & = & x \circ \tau_X
    \end{eqnarray*}
    showing that $(X, x)$ is special.
\end{proof}
\end{document}